\theoremstyle{plain}
\newtheorem{theorem}{Theorem}[section]
\theoremstyle{plain}
\newtheorem{corollary}[theorem]{Corollary}
\theoremstyle{plain}
\newtheorem{proposition}[theorem]{Proposition}
\theoremstyle{plain}
\newtheorem{lemma}[theorem]{Lemma}
\theoremstyle{definition}
\newtheorem{definition}[theorem]{Definition}
\theoremstyle{definition}
\newtheorem{example}[theorem]{Example}
\theoremstyle{definition}
\newtheorem{remark}[theorem]{Remark}
\theoremstyle{definition}
\newtheorem{problem}[theorem]{Problem}
\theoremstyle{definition}
\newtheorem{conjecture}[theorem]{Conjecture}
\theoremstyle{definition}
\newcommand{\init}{\mathrm{in}}
\newcommand{\PP}{\mathbb{P}}
\newcommand{\CC}{\mathbb{C}}
\newcommand{\RR}{\mathbb{R}}
\newcommand{\EE}{\mathbb{E}}
\newcommand{\IG}{\IG}
\newcommand{\I}{\mathcal{I}} 
\newcommand{\M}{\mathcal{M}} 
\newcommand{\K}{\mathcal{K}} 
\DeclareMathOperator{\reg}{reg} 
\DeclareMathOperator{\codim}{codim} 
\DeclareMathOperator{\Sec}{Sec} 
\DeclareMathOperator{\Ass}{Ass} 
\DeclareMathOperator{\HS}{HS} 
\DeclareMathOperator{\height}{ht} 
\newcommand{\from}{\colon}
\numberwithin{equation}{section}
\title[Moment varieties from inverse Gaussian and gamma distributions]{Moment varieties from inverse Gaussian and\\ gamma distributions}
\author{Oskar Henriksson, Lisa Seccia, Teresa Yu}
\begin{document}

\maketitle

\begin{abstract}
Motivated by previous work on moment varieties for Gaussian distributions and their mixtures, we study moment varieties for two other statistically important two-parameter distributions: the inverse Gaussian and gamma distributions. In particular, we realize the moment varieties as determinantal varieties and find their degrees and singularities. We also provide computational evidence for algebraic identifiability of mixtures, and study the identifiability degree and Euclidean distance degree. 
\end{abstract}

\section{Introduction}

Suppose $X$ is a univariate random variable from a distribution that depends on finitely many parameters $\theta=(\theta_1,\ldots,\theta_n)$, and we want to determine $\theta$ from observed data. One approach to this problem is the \emph{method of moments}. 

The $r$th \emph{moment} of $X$ is defined as the expected value $m_r(\theta):=\mathbb{E}(X^r)$.
For many distributions, the moments $m_r(\theta)$ are polynomials in the parameters $\theta$ (see, e.g., Table~2 of \cite{BS15}). This makes it possible to estimate $\theta$ by computing the sample moments $\widetilde{m}_r:=\frac{1}{N}\sum_{i=1}^N x_i^r$ for some large number $N$ many observations, and then solving the system of polynomials
\begin{equation}\label{eq:sample_moments}
    m_r(\theta)=\widetilde{m}_r,\quad r\in[d]
\end{equation}
for some $d\in\mathbb{N}$. The law of large numbers implies that we can expect arbitrarily good approximations of $\theta$ among the solutions of this system, if $N$ is large enough. In particular, the method of moments gives rise to a consistent estimator in many cases  (see, e.g., \cite[Theorem~9.6]{Was04}).

For each $d\in\mathbb{N}$, one can define the \emph{$d$th moment variety} $\M_d\subseteq\PP^d$ as the Zariski closure of the image of the map
\[\CC^n\to\mathbb{P}^d,\quad \theta\mapsto [m_0(\theta):m_1(\theta):\cdots:m_d(\theta)].\]
This family of projective varieties provides a starting point for the algebraic-geometric perspective on the method of moments. We are particularly interested in the notions of \emph{algebraic} and \emph{rational identifiability} of the parameters, which correspond to understanding when the system \eqref{eq:sample_moments} has finitely many complex solutions or a unique solution, and therefore to understanding when the parameters $\theta$ can be recovered from the moments. Such notions of identifiability can be interpreted as understanding generic fibers of the map $\CC^n\to\mathbb{P}^d$, and can therefore be studied with techniques from algebra and geometry.

An important tool for studying moment varieties is the concept of \emph{cumulants} and their associated varieties $\K_d$ (see Definition~\ref{defn:cumulant}). There is an isomorphism of affine varieties $\M_d\cap\{m_0\neq 0\}\cong\K_d$, and passing from moments to cumulants often gives rise to simpler varieties (see, e.g., \cite[\S2]{AFS16} and the more general discussion in \cite[\S4--5]{CCMRZ16}). Cumulants also have the additional advantage of being additive over independent variables. However, moments behave simpler with respect to certain other statistical operations; in particular, moment varieties of mixtures are secant varieties, which is a fact that we will return to in \S\ref{sec:future_directions}. Furthermore, the cumulant variety only captures an open dense patch of the moment variety, and therefore might miss important geometric aspects of the moment variety such as singularities.

\subsection*{Previous work on moment varieties}
The algebraic study of moments and cumulant varieties goes back to Pearson in 1894 \cite{Pea94}, who studied the mixtures of two univariate Gaussians. The case of Gaussians and Gaussian mixtures was later revisited from a nonlinear algebra perspective in \cite{AFS16}, as well as in the subsequent papers \cite{ARS18,AAR21,LAR21} that addressed various identifiability questions, and (in the latter case) approximation methods based on numerical algebraic geometry. In \cite{AFS16}, the authors study moments as projective varieties. The main advantage of studying moments from the perspective of projective geometry is that it allows for the study of secant varieties, which correspond to the moment varieties for mixtures of distributions. In general, algebraic and even rational identifiability for distributions listed in \cite[Table~2]{BS15} is clear, but it is unknown for mixtures of these distributions.  Other distributions for which moment and cumulant varieties have been studied include uniform distributions on polytopes \cite{KSS20}, Dirac and Pareto distributions \cite{GKW20}, and mixtures of products \cite{AKS23}.

\subsection*{Main contributions}
In this paper, we study the moment varieties for the inverse Gaussian and gamma distributions, which are both two-parameter distributions used in a wide array of applications. See Figure~\ref{fig:pdf} for examples of density functions, and see Figure~\ref{fig:moment_varieties} for an illustration of the moment varieties for $d=3$.

\begin{figure}
\begin{subfigure}{0.3\textwidth}
\centering
\begin{tikzpicture}[
scale=0.6,
declare function={pdf_gauss(\x,\mu,\v)=1/(sqrt(\v*2*pi))*exp(-1/2*((\x-\mu)/sqrt(\v))^2);}
]
\begin{axis}[
axis lines=left,
enlargelimits=upper,
samples=100,
column sep=0.2em,
row sep=0.2em,
legend cell align={left},
legend entries={
$\mu=-2{,}\:\:\: \sigma^2=5$,
$\mu=-1{,}\:\:\: \sigma^2=5$, 
$\mu=-2{,}\:\:\: \sigma^2=2$}
]
\addplot [smooth, domain=-6:4.5, red]{pdf_gauss(x,-2,5)};
\addplot [smooth, domain=-6:4.5, blue]{pdf_gauss(x,-1,5)};
\addplot [smooth, domain=-6:4.5, teal]{pdf_gauss(x,-2,2)};
\end{axis}
\end{tikzpicture}
\caption{Gaussian distribution}
\end{subfigure}
~
\begin{subfigure}{0.3\textwidth}
\centering
\begin{tikzpicture}[
scale=0.6,
declare function={pdf_ig(\x,\mu,\lambda)=sqrt(\lambda/(2*pi*\x^3)*exp(-\lambda*(\x-\mu)^2/(2*\mu^2*\x));}
]
\begin{axis}[
axis lines=left,
enlargelimits=upper,
samples=100,
column sep=0.2em,
row sep=0.2em,
legend cell align={left},
legend entries={
$\mu=1{,}\:\:\: \lambda=5$,
$\mu=2{,}\:\:\: \lambda=5$, 
$\mu=9{,}\:\:\: \lambda=2$}
]
\addplot [smooth, domain=0:5, red]{pdf_ig(x,1,5)};
\addplot [smooth, domain=0:5, blue]{pdf_ig(x,2,5)};
\addplot [smooth, domain=0:5, teal]{pdf_ig(x,1,2)};
\end{axis}
\end{tikzpicture}
\caption{Inverse Gaussian distribution}
\end{subfigure}
~
\begin{subfigure}{0.3\textwidth}
\centering
\begin{tikzpicture}[
scale=0.6,
declare function={gamma_function(\z)=(2.506628274631*sqrt(1/\z) + 0.20888568*(1/\z)^(1.5) + 0.00870357*(1/\z)^(2.5) - (174.2106599*(1/\z)^(3.5))/25920 - (715.6423511*(1/\z)^(4.5))/1244160)*exp((-ln(1/\z)-1)*\z);},
declare function={pdf_gamma(\x,\k,\theta) = \x^(\k-1)*exp(-\x/\theta)/(\theta^\k*gamma_function(\k));}
]
\begin{axis}[
axis lines=left,
enlargelimits=upper,
samples=100,
column sep=0.2em,
legend cell align={left},
legend entries={
$k=1{,}\:\:\: \theta=1$,
$k=2{,}\:\:\: \theta=1$, 
$k=1{,}\:\:\: \theta=2$}
]
\addplot [smooth, domain=0:5, red]{pdf_gamma(x,1,1)};
\addplot [smooth, domain=0:5, blue]{pdf_gamma(x,2,1)};
\addplot [smooth, domain=0:5, teal]{pdf_gamma(x,1,2)};
\end{axis}
\end{tikzpicture}
\caption{Gamma distribution}
\end{subfigure}
\caption{Probability density functions for three distributions.}
\label{fig:pdf}
\end{figure}
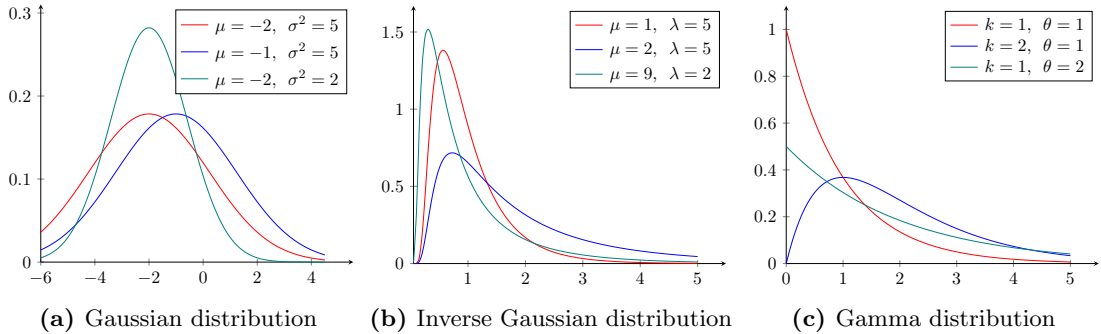

Our main results (Theorems~\ref{thm:invgauss ideal} and \ref{thm:gamma ideal}) give the homogeneous prime ideals for these moment varieties. We show that these ideals are \emph{determinantal ideals}, in the sense that they are generated by all maximal minors of a matrix. Interestingly, both these varieties turn out to be Cohen--Macaulay, which raises Problem \ref{Q:CM} on Cohen--Macaulayness of moment varieties.

Using these results, we find the degrees and Hilbert series for these moment varieties, and give Gröbner bases for their ideals. In particular, Corollary~\ref{prop:gamma_gb_hs} addresses a conjecture stated in \cite{aThesis} on the Hilbert series for the moment variety of the gamma distribution. We also identify the singular loci of the moment varieties (Propositions~\ref{prop:singular_locus_inverse_gaussian} and \ref{prop:singular_locus_gamma}). 

Additionally, we study the exponential and chi-squared distributions, which are one-parameter specializations of the gamma distribution, and we show that their moment varieties are rational normal curves (Propositions~\ref{prop:generators_for_exponetial} and \ref{prop:ideal chi-square}). 

Using the moments-to-cumulants transformation, we prove that the cumulant varieties of the inverse Gaussian are (scaled) Veronese varieties (Proposition~\ref{prop:cumulant_variety_for_inverse_guassian}); this has previously been shown to be the case for the gamma distribution \cite[Proposition~3.2.1]{aThesis}. These simple geometric models, together with previous results on the projective geometry of the corresponding moment varieties, could be the starting point for addressing the question of identifiability of mixtures.

We conclude the paper by providing computational evidence in \S\ref{subsec:identifiability} towards a conjecture on algebraic identifiability for mixtures of inverse Gaussian distributions and mixtures of gamma distributions (Conjecture \ref{conj:nondefec}). We also report on numerical estimations of the identifiability degree (\S\ref{subsec: ID}), and compute some Euclidean distance degrees (\S\ref{subsec: EDD}). 

\begin{figure}
    \centering
    \begin{subfigure}{0.30\textwidth}
    \centering
    \includegraphics[width=0.7\textwidth]{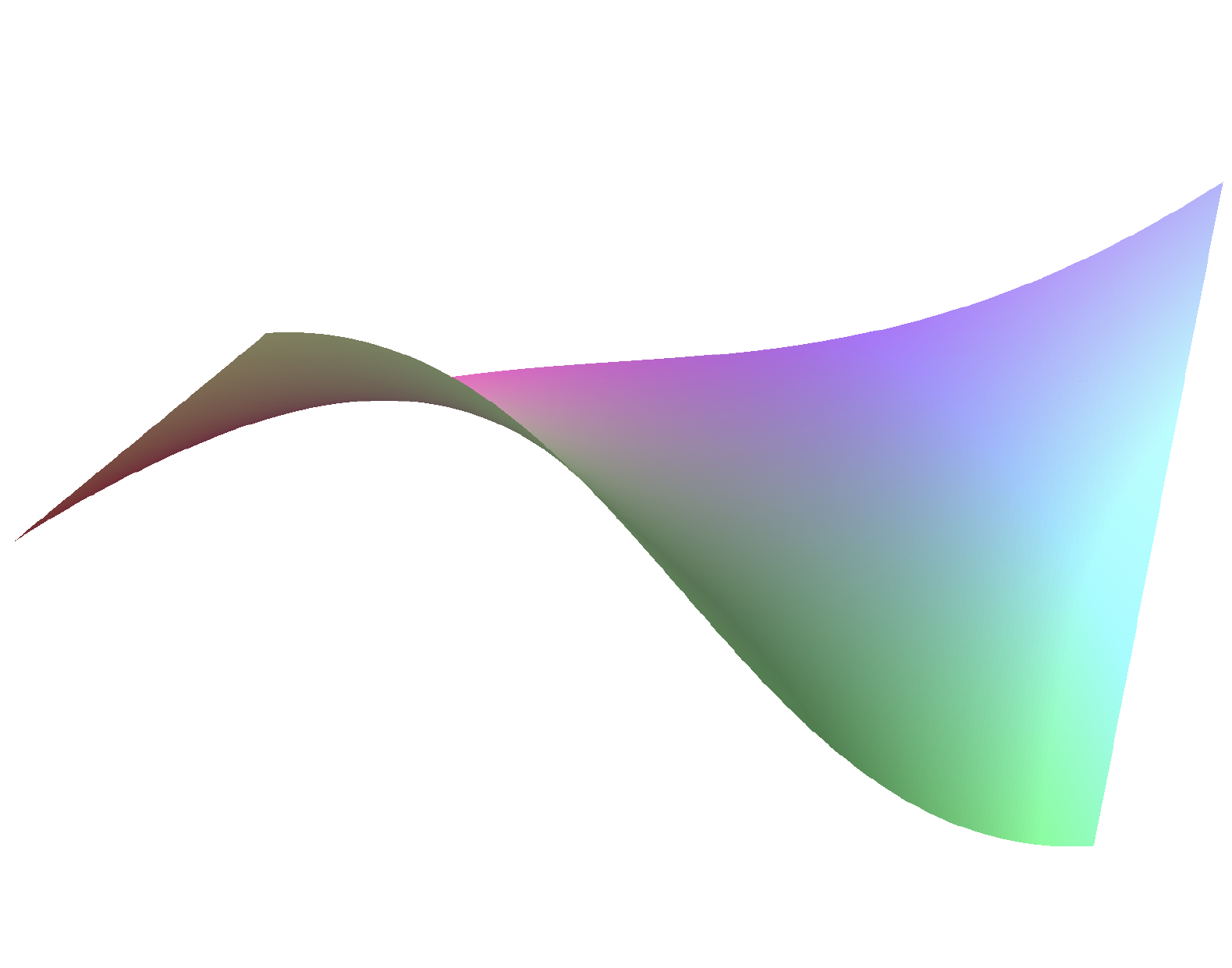}
    \vspace{0.5em}
    \caption{Gaussian distribution}
    \end{subfigure}
    \hspace{0.01\textwidth}
    \begin{subfigure}{0.30\textwidth}
    \centering
    \includegraphics[width=0.7\textwidth]{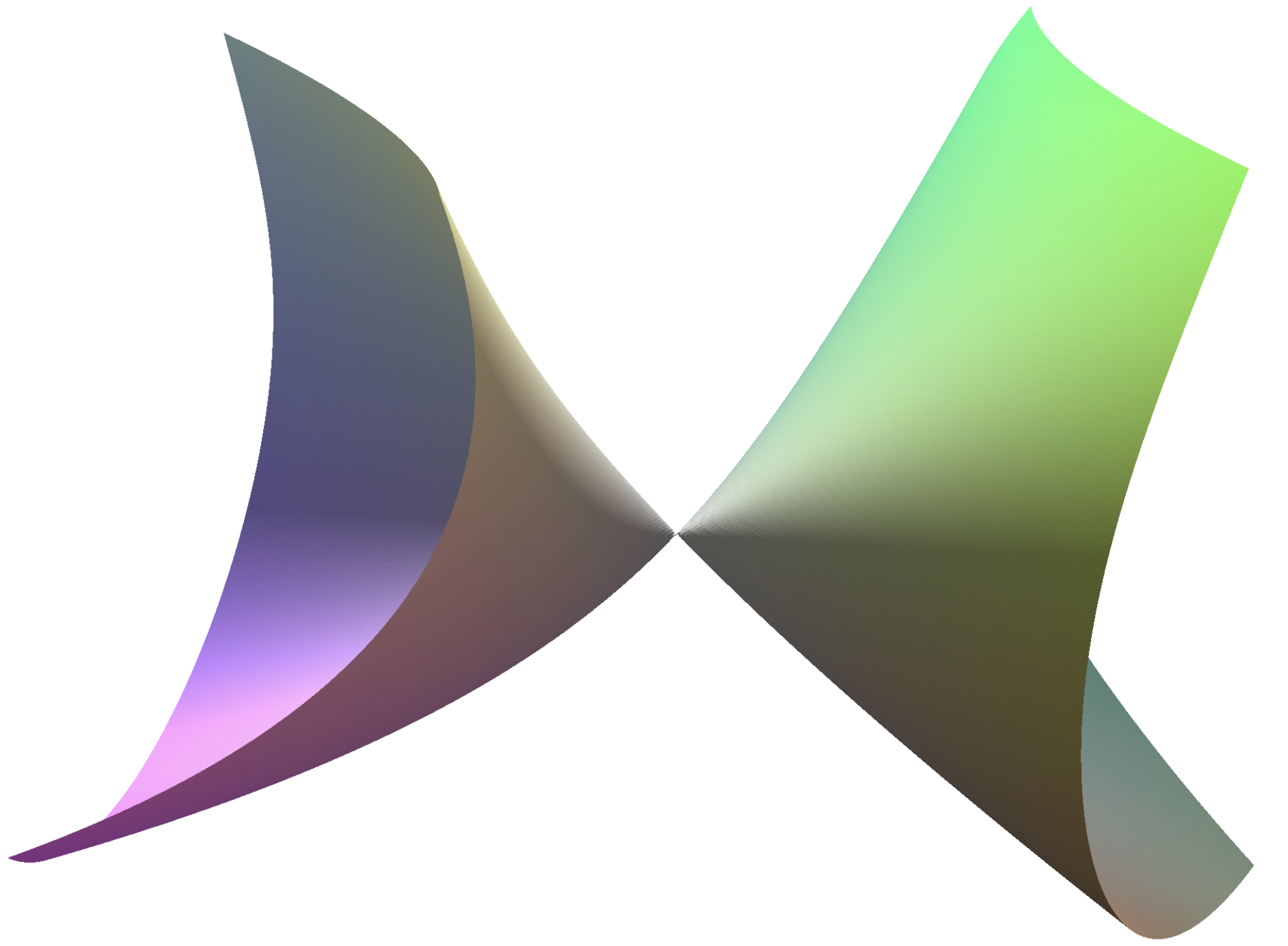}
    \vspace{0.5em}
    \caption{Inverse Gaussian distribution}
    \end{subfigure}
    \hspace{0.01\textwidth}
    \begin{subfigure}{0.30\textwidth}
    \centering
    \includegraphics[width=0.7\textwidth]{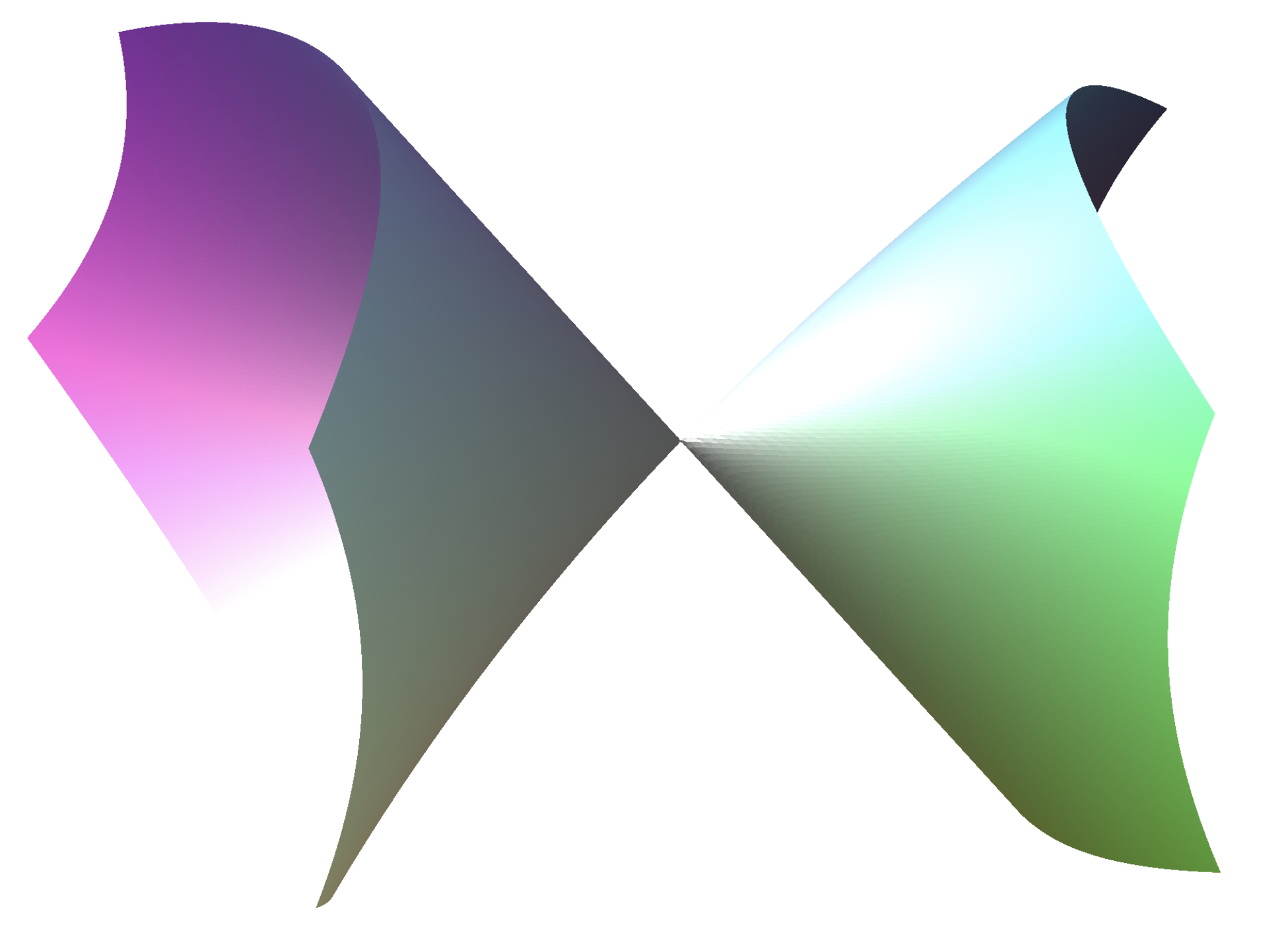}
    \vspace{0.5em}
    \caption{Gamma distribution}
    \end{subfigure}
    \caption{Illustration of the $\{m_0=1\}$ patch of $\M_3$ for three distributions.}
    \label{fig:moment_varieties}
\end{figure}

\subsection*{Applications of commutative algebra techniques}
Moment varieties and other varieties arising in algebraic statistics provide interesting yet concrete algebraic structures to which one can apply techniques from commutative algebra. In this paper, the most important techniques are from the theories of determinantal ideals and Cohen--Macaulay rings, and we provide necessary general background and references in \S\ref{sec:prelim}, as well as further references as needed. We refer the reader to \cite{BCRV,BV,Eis} for further general background on these topics. The Cohen--Macaulay property is particularly important in many of our results. Cohen--Macaulay varieties can be seen as ``mildly singular'' varieties with good homological properties and for which intersection multiplicity behaves well. For instance, they are equidimensional varieties with no embedded components. We also use polarization and Stanley--Reisner theory from combinatorial commutative algebra in \S\ref{sec:gamma}. We hope that our proof techniques inspire further applications of commutative algebra to algebraic statistics.

\subsection*{Outline of the paper}
The rest of the paper is organized as follows. In \S\ref{sec:prelim}, we provide preliminary definitions regarding moment varieties, as well as some background on the commutative algebra results and techniques that we will use. In \S\ref{subsec:inverse_gaussian}, we study moment varieties for the inverse Gaussian distribution. In \S\ref{sec:gamma}, we study moment varieties for the gamma distribution, as well as for certain one-parameter specializations. In \S\ref{sec:future_directions} of the paper, we outline a number of interesting directions for future work on the inverse Gaussian and gamma distributions. 

Code for the computational experiments presented in this paper is publicly available at 

\begin{center}
    \small
    \url{https://github.com/oskarhenriksson/moment-varieties-inverse-gaussian-and-gamma}. 
\end{center}

\subsection*{Notation and conventions}
Throughout the paper, $\mathcal{V}(I)$ denotes the variety associated to a homogeneous ideal $I$; we consider this variety as a projective variety unless otherwise specified. 
For a variety $V$ in a given projective or affine space, we use $\mathcal{I}(V)$ to denote the vanishing ideal.
Finally, for a matrix $H$ with entries in a ring, we use $I_t(H)$ to denote the ideal generated by all $(t\times t$)-minors of $H$.

\subsection*{Acknowledgements}
The authors thank Bernd Sturmfels for suggesting the problem and for his guidance throughout the project, as well as Carlos Am\'endola, Oliver Clarke, Francesco Galuppi, Alexandros Grosdos Koutsoumpelias, Julia Lindberg, Lizzie Pratt, Kristian Ranestad, Jose Israel Rodriguez, and Matteo Varbaro for helpful discussions. Part of this research was performed while the authors were visiting the Institute for Mathematical and Statistical Innovation (IMSI) for the Algebraic Statistics long program, which was supported by the National Science Foundation (NSF grant DMS-1929348), and the authors thank the other participants of the program for their feedback and comments. The authors also thank the Max Plank Institute for Mathematics in the Sciences for hosting the first and third authors and providing a productive visit, during which part of this research was also performed. OH was partially supported by the Novo Nordisk project with grant reference number NNF20OC0065582.  LS was partially supported by SNSF grant TMPFP2-217223. TY was partially supported by NSF grant DGE-2241144.

\section{Preliminaries}\label{sec:prelim}

In this section, we provide some background on moment varieties, as well as preliminaries on the techniques and results from commutative algebra that will be used in the rest of the paper.

\subsection{Moment and cumulant varieties}

Let $X$ be a univariate random variable parameterized by $\theta=(\theta_1,\ldots,\theta_n)$. The \emph{moment generating function} is given by
\[M(t,\theta):=\EE(e^{tX})=\sum_{r=0}^\infty\frac{m_r(\theta)t^r}{r!},\] 
and we recall from the introduction that the \emph{$d$th moment variety} $\M_d\subseteq\PP^d$ is defined as the Zariski closure of the image of the map
\[\CC^n\to\mathbb{P}^d,\quad \theta\mapsto [m_0(\theta):m_1(\theta):\cdots:m_d(\theta)].\]

It is typically the case that if $d$ is large enough, the map $\CC^n\to\mathcal{M}_d$ is generically one-to-one (meaning that the system \eqref{eq:sample_moments} has a unique complex solution for generic $\widetilde{m}\in\M_d$), and we say that we have \emph{rational identifiability}. A weaker condition for the map is \emph{algebraic identifiability}, meaning that it is generically finite-to-one (in the sense that the system \eqref{eq:sample_moments} has finitely many complex solutions for generic $\widetilde{m}\in\M_d$). By the theorem of the dimension of fibers, we have algebraic identifiability if and only if $\dim(\mathcal{M}_{d})=n$. 

For many univariate classical distributions, such as those listed in \cite[Table 2]{BS15}, rational identifiability follows immediately from the polynomials $m_r(\theta)$. However, it is generally much more difficult to know when $k$-mixtures of such distributions are algebraically or rationally identifiable from their moments, and we discuss this further in \S\ref{sec:future_directions}.

\begin{example}
    The univariate Gaussian distribution is parameterized by the mean $\mu$ and variance $\sigma^2$, and its moment generating function is given by
    \[M(t,\mu,\sigma^2)=\sum_{r=0}^\infty\frac{m_r(\mu,\sigma^2)}{r!}t^r=\exp(t\mu)\cdot\exp\left(\frac{1}{2}\sigma^2t^2\right).\]
    The polynomials in $\mu,\sigma^2$ for the moments of order up to $4$ are
    \[
    m_0=1,\quad
    m_1=\mu,\quad
    m_2=\mu^2+\sigma^2,\quad
    m_3=\mu^3+3\mu\sigma^2,\quad
    m_4=\mu^4+6\mu^2\sigma^2+3\sigma^4.
    \]
    These polynomials parameterize the fourth moment variety $\M_4\subseteq\PP^4$ of the univariate Gaussian distribution. By \cite[Proposition 2]{AFS16}, the homogeneous prime ideal of $\M_4$ is the ideal $I\subseteq\CC[m_0,\ldots,m_4]$ generated by the $3\times 3$ minors of the matrix
    \[\begin{pmatrix}
        0 & m_0 & 2m_1 & 3m_2 \\
        m_0 & m_1 & m_2 & m_3  \\
        m_1 & m_2 & m_3 & m_4
    \end{pmatrix}.\]
\end{example}

\begin{definition}\label{defn:cumulant}
    The $r$th \emph{cumulant} $k_r(\theta)$ of the probability distribution $X$ is defined by the cumulant generating function
\[K(t,\theta):=\log(M(t,\theta))=\sum_{r=1}^\infty\frac{k_r(\theta)t^r}{r!},\]
and the $d$th \emph{cumulant variety} $\K_d$ is defined as the Zariski closure of the image of the map
\[\CC^n\to\CC^d,\quad \theta\mapsto (k_1(\theta),\ldots,k_d(\theta)).\]
\end{definition}

The relation $K(t,\theta) = \log(M(t,\theta))$ gives rise to a nonlinear change of coordinates \[\CC^d\to\CC^d,\quad(m_1,\ldots,m_d)\mapsto (m_1, m_2-m_1^2, m_3-3m_2m_1+2m_1^3,\ldots)\]
that expresses the moments of order $\leq d$ as polynomials in the cumulants of order $\leq d$, and vice versa. It restricts to an isomorphism of affine varieties $\M_d\cap\{m_0\neq 0\}\to\K_d$, and hence a birational map $\M_d\dashrightarrow\K_d$, and is an example of a \textit{Cremona transformation} in the language of \cite{CCMRZ16}. For a more extensive background on the statistical properties of cumulants, we refer to \cite[\S2]{McCullagh18}.

\subsection{Background on commutative algebra}
We collect here the main commutative algebra results that will be used throughout the paper. For the rest of this subsection, $\dim(S)$ for a ring $S$ denotes its Krull dimension.

We begin by recalling some fundamental properties of Cohen--Macaulay rings. For a more comprehensive introduction to this class of rings, we refer the reader to \cite[Chapter 2]{BH}. 
Cohen--Macaulay rings are important because they exhibit good homological behavior that simplifies their algebraic and geometric structure. These rings have a well-behaved dimension theory  (they are defined by having depth equal to Krull dimension), which ensures they have the following desirable properties.
\begin{proposition} \label{prop:CMproperties}
    Let $R$ be a Noetherian Cohen--Macaulay ring. Then:
\begin{enumerate}
    \item $R$ is equidimensional, i.e., all the minimal prime ideals of $R$ have same dimension.
    \item $R$ has no embedded primes. In particular, it is unmixed, i.e., all the associated prime ideals of $R$ have same dimension.
    \item $\dim (R/I)=\dim R -\height(I)$ for any ideal $I \subset R$, where $\height(I)$ denotes the height or codimension of $I$.
     \item $r$ is a non-zero-divisor in $R$ if and only if $\dim (R/(r))=\dim R -1$.
\end{enumerate}
\end{proposition}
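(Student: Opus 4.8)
The plan is to deduce all four statements from the local structure theory of Cohen--Macaulay rings, using two standard facts about a Cohen--Macaulay local ring $(R,\mathfrak m)$ (see \cite[\S2.1]{BH}): \emph{(i)} $\Ass(R)=\operatorname{Min}(R)$, i.e.\ $R$ has no embedded primes; and \emph{(ii)} the dimension formula $\height(\mathfrak p)+\dim(R/\mathfrak p)=\dim R$ for every prime $\mathfrak p$, equivalently that $R$ is catenary and equidimensional. Reducing to the local case is harmless: localizations and quotients by nonzerodivisors of Cohen--Macaulay rings are again Cohen--Macaulay, the formation of minimal and associated primes commutes with localization, and in the paper $R$ is always a quotient of a polynomial ring, so one may pass to the localization at the homogeneous maximal ideal, where (1)--(4) reduce to their local counterparts. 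Fact \emph{(i)} is immediate from the definition of Cohen--Macaulayness: if $\mathfrak p\in\Ass(R)$ then $\operatorname{depth}(R_\mathfrak p)=0$, and since $R_\mathfrak p$ is Cohen--Macaulay this forces $\dim(R_\mathfrak p)=0$, so $\mathfrak p$ is minimal; the containment $\operatorname{Min}(R)\subseteq\Ass(R)$ holds for any Noetherian ring.

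Granting \emph{(i)} and \emph{(ii)}, I would obtain the four assertions as formal consequences. Statement (2) is \emph{(i)} together with the remark that, once $\Ass(R)=\operatorname{Min}(R)$, unmixedness is exactly the statement that all minimal primes have the same dimension --- which is statement (1). Statement (1) is the case $\height(\mathfrak p)=0$ of \emph{(ii)}: a minimal prime has height $0$, hence $\dim(R/\mathfrak p)=\dim R$. Statement (3) follows from the prime case of \emph{(ii)}: for an ideal $I$ with minimal primes $\mathfrak p_1,\dots,\mathfrak p_s$ one has $\dim(R/I)=\max_i\dim(R/\mathfrak p_i)$ and $\height(I)=\min_i\height(\mathfrak p_i)$, and combining these with \emph{(ii)} gives $\dim(R/I)=\dim R-\height(I)$.

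For statement (4) I would use (1)--(3) and Krull's principal ideal theorem. If $r$ is a nonzerodivisor, then $r$ lies in no associated prime, hence by (2) in no minimal prime of $R$; so every prime $\mathfrak q$ minimal over $(r)$ properly contains a minimal prime, giving $\height(\mathfrak q)\ge 1$, while Krull's principal ideal theorem gives $\height(\mathfrak q)\le 1$; thus $\height(\mathfrak q)=1$ and (3) yields $\dim(R/(r))=\dim R-1$ (with $r$ a nonunit, as is tacitly assumed). Conversely, if $r$ is a zerodivisor, then $r\in\mathfrak p$ for some $\mathfrak p\in\Ass(R)$, which is minimal by (2), so $R/\mathfrak p$ is a quotient of $R/(r)$ and (1) gives $\dim(R/(r))\ge\dim(R/\mathfrak p)=\dim R$, contradicting $\dim(R/(r))=\dim R-1$.

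The one genuinely substantive ingredient --- and the step I expect to be the main obstacle in a self-contained treatment --- is the dimension formula \emph{(ii)}. The standard route (see \cite[\S2.1]{BH}, and also \cite{BCRV,BV,Eis}) is to show first that in a Cohen--Macaulay local ring every ideal of height $r$ contains an $R$-regular sequence of length $r$, built by repeated prime avoidance over the finitely many minimal primes (which by \emph{(i)} exhaust the associated primes), so that heights are computed by grades; quotienting by such a regular sequence drops $\dim R$ by exactly $r$ and preserves Cohen--Macaulayness, and an induction on $\height(\mathfrak p)$ then yields the formula. This is the one place where the Cohen--Macaulay hypothesis does real work --- that heights are realized by regular sequences --- and without it even equidimensionality can fail, for instance for $\KK[x,y,z]/(xy,xz)$, the coordinate ring of a plane meeting a line. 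Accordingly, in the actual write-up I would simply cite \cite[\S2.1]{BH} for \emph{(i)} and \emph{(ii)} and include only the short deductions of (1)--(4) given above.
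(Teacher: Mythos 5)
The paper gives no proof of this proposition; it is stated as a collection of standard facts about Cohen--Macaulay rings, with a blanket reference to \cite[Chapter~2]{BH} just before the statement. Your derivation is correct and substantially more detailed: the identification $\Ass(R)=\operatorname{Min}(R)$ via $\operatorname{depth}(R_{\mathfrak p})=\dim(R_{\mathfrak p})$, the height/dimension bookkeeping over the minimal primes of $I$ in part~(3), and the two directions of~(4) via Krull's principal ideal theorem are all exactly the standard arguments (and you correctly flag the tacit hypothesis that $r$ be a nonunit). You also make a genuinely useful observation that the paper leaves implicit: as literally stated for an arbitrary Noetherian Cohen--Macaulay ring, parts~(1) and~(3) can fail --- $\KK[x]\times\KK[y,z]$ is Cohen--Macaulay but not equidimensional --- and one needs $R$ to be local, or a connected graded algebra over a field, which is the only situation appearing in the paper, and this is where your reduction to the local case does real work. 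The one step worth spelling out a touch more is the passage from the graded ring to its localization at the irrelevant maximal ideal: this preserves dimensions, heights, associated primes, and the nonzerodivisor property of a homogeneous element precisely because minimal and associated primes of homogeneous ideals are themselves homogeneous (hence contained in the irrelevant ideal and survive the localization). With that caveat, the proposal is a correct and self-contained substitute for the citation the paper relies on.
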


Cohen--Macaulay rings are ubiquitous in algebra and geometry. They frequently arise as coordinate rings of many important varieties, such as smooth algebraic varieties and certain rings of invariants. 

\begin{example}
The following are some standard examples and non-examples of Cohen--Macaulay rings:
    \begin{itemize}
        \item Regular (local) rings are Cohen--Macaulay. In particular, a standard graded polynomial ring over a field  is Cohen--Macaulay.
        \item Complete intersection rings are Cohen--Macaulay.
        \item Let $R=\CC[x,y]/(x^2y,x)$. Its associated primes are $(x,y)$ and $(x)$. In particular, $R$ has an embedded component, and so condition (ii) above fails. Therefore, $R$ is not Cohen--Macaulay.
        \item Let $R=\CC[x,y,z]/(xy,xz)$. Its associated primes are $(x)$ and $(y,z)$. In this case $R$, has no embedded component, but it is not equidimensional and so condition (i) above fails. Therefore, $R$ is not Cohen--Macaulay.
    \end{itemize}
\end{example}

In this paper, we often work with \emph{determinantal ring}, i.e., rings of the form $R=S/I$ where $S$ is a standard graded polynomial ring and $I=I_t(H)$ is an ideal generated by the $t$-minors of a $k \times \ell$ matrix $H$ with entries in $S$. The following result is key for proving that moment varieties of inverse Gaussian and gamma distributions have determinantal realizations (Theorems \ref{thm:invgauss ideal} and \ref{thm:gamma ideal}).

\begin{proposition}[{\cite[Corollary~3.4.10]{BCRV}}]\label{prop:CMdetIdeals}
    Let $R=S/I$ be a determinantal ring, where $I=I_t(H)$ with $H$ a matrix of size $k\times \ell$. If $I$ has codimension $(k-t+1)(\ell-t+1)$, then $R$ is Cohen--Macaulay.
\end{proposition}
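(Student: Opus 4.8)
The plan is to reduce the statement to the case of a \emph{generic} matrix, where Cohen--Macaulayness is classically known, and then transport the conclusion along the specialization $H$ by a base-change argument on free resolutions. Concretely, let $K$ be the ground field, let $X=(x_{ij})$ be a $k\times\ell$ matrix of distinct indeterminates, and set $T=K[x_{ij}]$ and $c=(k-t+1)(\ell-t+1)$. The entries of $H$ determine a $K$-algebra homomorphism $\varphi\colon T\to S$ with $\varphi(x_{ij})=h_{ij}$, and under it $I_t(X)\cdot S=I_t(H)=I$, so that $R\cong S\otimes_T(T/I_t(X))$. The classical Hochster--Eagon theorem (see \cite[\S3.3]{BCRV} or \cite[Ch.~5]{BV}) asserts that $T/I_t(X)$ is Cohen--Macaulay of codimension exactly $c$; since $T$ is regular, $I_t(X)$ is then a perfect ideal of grade $c$, and $T/I_t(X)$ admits a finite free resolution $F_\bullet\to T/I_t(X)$ over $T$ of length precisely $c$ (e.g.\ the Eagon--Northcott complex when $t=\min(k,\ell)$, or the Lascoux resolution in general).

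The next step is to base-change this resolution. Viewing $S$ as a $T$-module via $\varphi$, form the complex $G_\bullet:=F_\bullet\otimes_T S$ of finitely generated free $S$-modules; it has length $c$ and $H_0(G_\bullet)=S\otimes_T(T/I_t(X))=S/I=R$. The key claim is that $G_\bullet$ is acyclic. For this I would invoke the Buchsbaum--Eisenbud acyclicity criterion: with $r_i$ the rank of the $i$th differential $\partial_i$ of $F_\bullet$, acyclicity of $G_\bullet$ reduces to the grade inequalities $\operatorname{grade}_S I_{r_i}(\partial_i\otimes S)\ge i$ for $1\le i\le c$ (the rank conditions transfer automatically, and $I_{r_i}(\partial_i\otimes S)=I_{r_i}(\partial_i)\cdot S$ since the differentials have the same matrix entries, now read in $S$). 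Over $T$ the inequalities $\operatorname{grade}_T I_{r_i}(\partial_i)\ge i$ hold because $F_\bullet$ is itself acyclic; the content is that they persist after applying $\varphi$. This is exactly the phenomenon of \emph{generic perfection} in the sense of Hochster: $T/I_t(X)$ is defined over $\ZZ$, flat over $\ZZ$ with fibers of constant grade $c$ (visible from its standard-monomial $\ZZ$-basis), so $F_\bullet$ stays acyclic under any base change that does not lower the grade of $I_t(X)$ --- and the hypothesis $\codim_S I=c$, together with $S$ being Cohen--Macaulay (so that $\operatorname{grade}=\codim$ for every ideal of $S$), says precisely that $\operatorname{grade}_S(I_t(X)\cdot S)=c$ has not dropped.

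Granting acyclicity, $G_\bullet$ is an $S$-free resolution of $R$ of length $c$, so $\operatorname{pd}_S R\le c$. Since $S$ is a polynomial ring, $\operatorname{depth}S=\dim S$, and the Auslander--Buchsbaum formula gives $\operatorname{depth}_S R=\dim S-\operatorname{pd}_S R\ge\dim S-c$. On the other hand $\dim R=\dim S-\codim I=\dim S-c$ by Proposition~\ref{prop:CMproperties}(3) applied to $S$, and always $\operatorname{depth}_S R\le\dim R$; hence $\operatorname{depth}_S R=\dim R$, i.e.\ $R$ is Cohen--Macaulay. The degenerate case $c=0$ forces $I=0$, so $R=S$ is trivially Cohen--Macaulay and is covered as well.

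The main obstacle is the persistence of the grade conditions under the specialization $\varphi$ --- equivalently, the generic perfection of $I_t(X)$. No soft homological argument suffices here: one genuinely exploits special structural features of the generic determinantal ideal (its primeness, its algebra-with-straightening-law / standard-monomial structure, or the explicit shape of its minimal free resolution) to control the ideals of minors $I_{r_i}(\partial_i)$ and prevent their grades from collapsing after base change. This is precisely the material developed in \cite[\S3.4]{BCRV}, of which the stated proposition is the upshot; in a fully self-contained write-up this combinatorial input would be the substantial part.
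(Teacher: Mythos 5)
The paper offers no proof of this statement; it is cited directly from \cite[Corollary~3.4.10]{BCRV}, so there is no ``paper's proof'' to compare against. Judged on its own terms, your outline is the standard route to this result and is correct in structure: reduce to the generic matrix $X$ via the map $x_{ij}\mapsto h_{ij}$, invoke Hochster--Eagon for perfection of $T/I_t(X)$, base-change a finite free resolution of $T/I_t(X)$ along $T\to S$, deduce $\operatorname{pd}_S R\le c$, and close with Auslander--Buchsbaum together with $\dim R=\dim S-c$.

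Where I would push back is on your diagnosis of where the difficulty lives. You route the acyclicity of $F_\bullet\otimes_T S$ through the Buchsbaum--Eisenbud criterion and then flag the persistence of the intermediate grade inequalities $\operatorname{grade}_S I_{r_i}(\partial_i\otimes S)\ge i$ as ``the main obstacle,'' claiming that ``no soft homological argument suffices.'' This is not quite right. The transfer step is in fact soft and needs nothing special about determinantal ideals: localize at a prime $P$ of $S$, run Noetherian induction on $\dim S_P$ (using flat base change to dispose of primes with $I_t(X)S_P=S_P$), conclude that $H_i(F_\bullet\otimes_T S)_P$ has finite length for $i>0$, note that $\operatorname{depth}S_P\ge\operatorname{grade}(I_t(X)S_P)\ge c\ge i$, and apply the Peskine--Szpiro acyclicity lemma. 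Only the hypothesis $\operatorname{grade}_S(I_t(X)S)\ge c$ --- which, as you correctly observe, follows from $\operatorname{codim}_S I=c$ and $S$ Cohen--Macaulay --- is used; one never needs to control $I_{r_i}(\partial_i\otimes S)$ for $i>1$. (Your B--E route can also be made to work, but it additionally requires the structural fact that $\sqrt{I_{r_i}(\partial_i)}=\sqrt{I_t(X)}$ for the Eagon--Northcott/Lascoux differentials, which you do not state and which is an extra thing to verify.) The genuinely hard, structure-dependent input is the Hochster--Eagon theorem you invoke at the start as ``classical''; the remainder of \cite[\S 3.4]{BCRV} is precisely the soft generic-perfection machinery. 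In short: the proof is right, but the hard part is the step you take for granted, and the part you flag as hard is the easy one.
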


For computational purposes, it is often useful to have a Gröbner basis of the homogeneous ideals defining our moment varieties. In Propositions \ref{prop:invgauss gb} and \ref{prop:gamma_gb_hs}, we prove that the natural generators of these determinantal ideals form Gr\"obner bases with respect to a suitable term order. To do so, we use some standard results from commutative algebra that we collect in the next two lemmas.

\begin{lemma}[{\cite[Proposition 1.4.7]{BCRV}}] \label{lemma: Grobner basis IG}
Let $S$ be a standard graded polynomial ring and let $I \subseteq S$ be a homogeneous ideal. Consider the ideal $J=(\init(f_1), \ldots, \init(f_r))$, where $f_1, \ldots, f_r$ are homogeneous elements of $I$. Then $J =\init(I)$ if the following conditions hold:
\begin{itemize}
    \item $\dim (S/I)= \dim (S/J)$;
    \item $J$ is unmixed;
    \item $\deg(J)=\deg (I)$.
\end{itemize}

\end{lemma}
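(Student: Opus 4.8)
The plan is to prove the nontrivial inclusion by a Hilbert-polynomial argument together with the unmixedness hypothesis. The containment $J\subseteq\init(I)$ is automatic: each generator $\init(f_i)$ of $J$ is, by definition, the leading term of the element $f_i\in I$, and therefore lies in $\init(I)$. It remains to prove $\init(I)\subseteq J$, equivalently that the graded module $M:=\init(I)/J$ vanishes.

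First I would invoke the standard fact that a homogeneous ideal and its initial ideal have the same Hilbert function, so that $\dim(S/\init(I))=\dim(S/I)$ and $\deg(\init(I))=\deg(I)$; together with the two numerical hypotheses this gives $\dim(S/J)=\dim(S/\init(I))=:e$ and $\deg(J)=\deg(\init(I))$. Next I would consider the short exact sequence of graded modules $0\to M\to S/J\to S/\init(I)\to 0$ induced by $J\subseteq\init(I)$, whose Hilbert polynomials satisfy $P_{S/J}=P_M+P_{S/\init(I)}$. Since $S/J$ and $S/\init(I)$ have the same Krull dimension $e$ and the same degree, the polynomials $P_{S/J}$ and $P_{S/\init(I)}$ agree in degree $e-1$ and in leading coefficient, so $P_M=P_{S/J}-P_{S/\init(I)}$ has degree strictly less than $e-1$; this forces $\dim M<e$, with the understanding that this holds vacuously when $M=0$.

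Finally I would use that $M$ is a submodule of $S/J$, hence $\Ass(M)\subseteq\Ass(S/J)$. By the unmixedness hypothesis, every prime in $\Ass(S/J)$ has dimension $\dim(S/J)=e$. If $M$ were nonzero, then $\Ass(M)$ would be nonempty and any $\mathfrak p\in\Ass(M)$ would satisfy simultaneously $\dim(S/\mathfrak p)=e$ (because $\mathfrak p\in\Ass(S/J)$) and $\dim(S/\mathfrak p)\le\dim M<e$, which is absurd. Therefore $M=0$, i.e.\ $\init(I)=J$.

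The step that requires the most care is the middle one — deducing $\dim M<e$ — since it relies on the additivity of Hilbert polynomials along the short exact sequence and on the fact that the degree of a graded module of dimension $e$ is detected by the top coefficient of its Hilbert polynomial; one should also handle the degenerate cases ($e=0$, or $M$ of finite length) where "$\dim M<e$" must be interpreted as "$M=0$". Beyond that, the argument is a formal consequence of the invariance of the Hilbert function under passing to initial ideals and of the unmixedness assumption.
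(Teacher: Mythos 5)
The paper does not prove this lemma; it is quoted as a known result from \cite[Proposition~1.4.7]{BCRV} and used as a black box elsewhere, so there is no in-text argument to compare against. Your proof is correct and is the standard argument one would expect to find in that reference. The inclusion $J\subseteq\init(I)$ is immediate; the invariance of the Hilbert function under passing to the initial ideal converts the two numerical hypotheses involving $I$ into the statements $\dim(S/J)=\dim(S/\init(I))=:e$ and $\deg(S/J)=\deg(S/\init(I))$; the short exact sequence $0\to M\to S/J\to S/\init(I)\to 0$ then forces $\dim M<e$; and unmixedness of $J$ rules out a nonzero $M$, since any $\mathfrak p\in\Ass(M)\subseteq\Ass(S/J)$ would have to satisfy both $\dim(S/\mathfrak p)=e$ and $\dim(S/\mathfrak p)\le\dim M<e$. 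You have also correctly flagged the one technical subtlety: when $e=0$ (or when $P_M\equiv 0$), the comparison of leading coefficients of Hilbert polynomials degenerates, and the cleanest fix is to run the count with lengths, noting that equal lengths of $S/J$ and $S/\init(I)$ in the exact sequence give $\operatorname{length}(M)=0$ directly. All three hypotheses are used where they should be, and no step is missing.
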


\begin{lemma}[{\cite[\S15.1.1]{EisCA}}] \label{lemma:SES}
Let $I$ be a monomial ideal and let $M$ be a minimal generator of $I$ of degree $s$. Write $I=I'+(M)$ for an ideal $I'$. Then, there is a short exact sequence of graded modules and degree 0 maps
    \begin{equation}\label{SES}
            0 \longrightarrow S/(I':M)[-s] \longrightarrow S/I' \longrightarrow S/I \longrightarrow 0.
    \end{equation}
Since the Hilbert series is additive in short exact sequences, this shows that
\begin{equation}
        \HS_{S/I}(t)= \HS_{S/I'}(t)- \HS_{S/(I':M)[-s]}(t).
    \end{equation}
\end{lemma}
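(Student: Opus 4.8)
The plan is to realize \eqref{SES} as the short exact sequence attached to the natural surjection $\pi\colon S/I' \twoheadrightarrow S/I$ coming from the inclusion $I' \subseteq I = I' + (M)$. Since $\ker\pi = I/I'$, it suffices to identify $I/I'$ with $(S/(I':M))[-s]$ as graded $S$-modules via a degree-$0$ isomorphism, and then splice.

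To produce that isomorphism, I would consider the $S$-linear map $\varphi\colon S[-s]\to S/I'$ given by $f\mapsto \overline{fM}$. Multiplication by the degree-$s$ element $M$ raises internal degree by $s$, and the shift $[-s]$ compensates exactly for this, so $\varphi$ is homogeneous of degree $0$. Its image is $\big(I'+(M)\big)/I' = I/I'$, and its kernel is $\{f\in S : fM\in I'\} = (I':M)$, regarded inside the shifted module, i.e.\ $(I':M)[-s]$. Passing to the quotient, $\varphi$ therefore induces a degree-$0$ graded isomorphism
\[
(S/(I':M))[-s]\;\xrightarrow{\ \sim\ }\; I/I' \;=\; \ker\pi,
\]
and combining this with $\pi$ gives the short exact sequence \eqref{SES}.

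For the Hilbert series identity, I would invoke additivity of Hilbert series along short exact sequences of finitely generated graded modules with degree-$0$ maps: for $0\to A\to B\to C\to 0$ one has $\dim_\KK B_d = \dim_\KK A_d + \dim_\KK C_d$ in every degree $d$, and hence $\HS_B(t)=\HS_A(t)+\HS_C(t)$. Applying this to \eqref{SES} and solving for $\HS_{S/I}(t)$ yields $\HS_{S/I}(t)=\HS_{S/I'}(t)-\HS_{(S/(I':M))[-s]}(t)$, as claimed.

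There is no genuine obstacle here; the argument is a routine application of an isomorphism theorem, and the only point requiring care is the bookkeeping of the grading shift $[-s]$. It is worth remarking that the hypotheses that $I$ is a monomial ideal and $M$ a minimal monomial generator are not actually needed for the displayed exact sequence: the same computation of $\ker\varphi$ and $\operatorname{im}\varphi$ goes through whenever $M$ is an arbitrary homogeneous element of degree $s$ with $I = I' + (M)$; in the monomial setting these hypotheses merely serve to make the decomposition $I = I' + (M)$ and the resulting recursion on Hilbert series convenient to apply.
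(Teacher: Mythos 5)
Your proof is correct and coincides with the standard argument underlying the cited reference: the multiplication-by-$M$ map $S[-s]\to S/I'$ is degree $0$ after the shift, has kernel $(I':M)[-s]$ and image $I/I'=\ker\bigl(S/I'\twoheadrightarrow S/I\bigr)$, and splicing gives the short exact sequence, whence the Hilbert series identity by additivity. The paper states this lemma only as a citation without proof, so there is nothing substantively different to compare; your remark that neither monomiality of $I$ nor minimality of $M$ is actually required for the exact sequence is also accurate.
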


A special class of determinantal ideals that arise in our study of moment varieties are those coming from $1$-generic (Hankel) matrices (see e.g., Propositions \ref{prop:generators_for_exponetial} and \ref{prop:ideal chi-square}).  
The notion of $1$-generic matrix was introduced by Eisenbud in \cite{Eis}. A matrix $H$ is called \emph{$1$-generic} if it has no generalized entries which are zero, meaning that no entries are identically zero after arbitrary scalar row and column operations. Examples of $1$-generic matrices are generic matrices and Hankel matrices (up to some unit coefficients). The ideals of maximal minors of these matrices are well-understood, and the following result of Eisenbud shows that such ideals are prime and of expected codimension.

\begin{proposition}[{\cite[Theorem 1]{Eis}}]\label{prop:Eis 1-gen}
Let $H$ be a $1$-generic matrix with entries in a ring $S$, and of dimension $k\times \ell$ matrix with $k\leq \ell$. Then
 $I_k(H)$ is prime and of codimension $\ell-k+1$. In particular, $I_k(H)$ is Cohen--Macaulay.
\end{proposition}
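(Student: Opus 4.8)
The plan is to separate the statement into two claims, $\codim I_k(H) = \ell - k + 1$ and primeness of $I_k(H)$, and to get the Cohen--Macaulay assertion essentially for free from the first one. Indeed, applying Proposition~\ref{prop:CMdetIdeals} with $t = k$, the threshold codimension is $(k-t+1)(\ell-t+1) = \ell-k+1$, so the codimension equality alone forces $S/I_k(H)$ to be Cohen--Macaulay (in the standard graded polynomial setting; in the generality of an arbitrary Noetherian $S$ one instead invokes the exactness criterion for the Eagon--Northcott complex). In particular, primeness plays no role in the Cohen--Macaulay conclusion. Moreover, the upper bound $\codim I_k(H) \le \ell - k + 1$ requires no $1$-genericity: it is the standard bound for determinantal ideals, obtained by specializing from the generic $k \times \ell$ matrix whose ideal of maximal minors has codimension exactly $\ell - k + 1$ (see, e.g., \cite{BV,BCRV}). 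So the actual content is the reverse codimension inequality together with primeness.

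The underlying geometric picture is that, writing the entries of $H$ as linear forms, $H$ corresponds to a linear map $\phi\from \mathbb{A}^{n} \to \mathrm{Mat}_{k\times\ell}$ and $\mathcal{V}(I_k(H)) = \phi^{-1}(D_{k-1})$ is a linear section of the generic determinantal variety $D_{k-1} = \{\mathrm{rank}\le k-1\}$, which is irreducible, normal, Cohen--Macaulay of codimension $\ell - k + 1$, with singular locus $D_{k-2}$. The role of $1$-genericity is to force this linear section to behave as generically as possible: to have the expected codimension and to meet the smooth locus $D_{k-1}\setminus D_{k-2}$ in a dense subset. Granting this, a Bertini-type argument on the smooth part yields irreducibility and generic reducedness, so combining with the upper bound gives $\codim I_k(H) = \ell - k + 1$; then $S/I_k(H)$ is Cohen--Macaulay, hence unmixed with no embedded primes, and being generically reduced along its unique minimal prime and irreducible, it is reduced, and with a single minimal prime it is prime.

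To make "as generic as possible'' precise, I would argue by induction on $k$. In the base case $k=1$, having no zero generalized entry says precisely that $H_{1,1},\dots,H_{1,\ell}$ remain nonzero after arbitrary column operations, i.e.\ they are $\ell$ linearly independent linear forms, so $I_1(H)$ is a linear prime of codimension $\ell$. For the inductive step, $1$-genericity supplies a nonzero generalized $(1,1)$-entry, so after row and column operations—which change neither $I_k(H)$ nor the $1$-genericity of $H$—we may take $f := H_{1,1} \ne 0$; over $S_f$, clearing the first row and column by Gaussian elimination puts $H$ in block form $\left(\begin{smallmatrix} f & 0 \\ 0 & H' \end{smallmatrix}\right)$ with $H'$ a $(k-1)\times(\ell-1)$ matrix over $S_f$ and $I_k(H)\,S_f = I_{k-1}(H')\,S_f$. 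The Schur complement $H'$ is again $1$-generic, now over the larger ring $S_f$—which is exactly why it is both necessary and natural to phrase the result over an arbitrary ring—so by induction $I_{k-1}(H')\,S_f$ is prime of codimension $(\ell-1)-(k-1)+1 = \ell-k+1$. Letting the generalized entry range over all of them covers $\mathcal{V}(I_k(H))$ away from $\mathcal{V}(I_{k-1}(H))$, and one concludes as above.

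The step I expect to be the main obstacle is the propagation of $1$-genericity: checking carefully that the Schur complement $H'$ inherits the property (for a matrix that is \emph{not} $1$-generic, the elimination can manufacture a zero generalized entry and the induction collapses—this is precisely where the hypothesis is used), together with the gluing step, i.e.\ controlling $\mathcal{V}(I_k(H))$ along $\mathcal{V}(I_{k-1}(H))$ where the reduction to a smaller matrix fails. The Cohen--Macaulay property—available from the codimension count \emph{before} primeness is addressed—is exactly what rules out unexpected or embedded components there, so the implication chain codimension $\Rightarrow$ Cohen--Macaulay $\Rightarrow$ unmixed $\Rightarrow$ prime should be carried out in that order.
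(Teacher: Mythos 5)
This proposition is cited from Eisenbud's \emph{Linear sections of determinantal varieties} and is not proved in the paper; you are therefore attempting to supply a proof of a quoted result, and your structural plan (codimension $\Rightarrow$ Cohen--Macaulay via Proposition~\ref{prop:CMdetIdeals} $\Rightarrow$ unmixed $\Rightarrow$ prime) is a perfectly sensible order of implications. You are also right that the upper bound $\codim I_k(H)\le \ell-k+1$ is standard and needs no $1$-genericity.

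The genuine gap is in the inductive step. After localizing at a nonzero generalized entry $f$ and passing to the Schur complement, the entries of $H'$ are elements of $S_f$ of the form $H_{i+1,j+1}-H_{i+1,1}H_{1,j+1}/f$: they are no longer linear forms, and they involve a denominator. This is not a harmless change of ambient ring. The notion of $1$-genericity, and more importantly the base case you rely on, are genuinely about a $\mathbb{C}$-vector space of linear forms: for $k=1$, ``no zero generalized entry'' means the $\ell$ entries are $\mathbb{C}$-linearly independent \emph{linear forms}, and it is only because they are linear that ``linearly independent'' gives a prime of codimension $\ell$. For a $1\times(\ell-1)$ row of elements of $S_f$ that are merely $\mathbb{C}$-linearly independent, the conclusion fails in general (e.g.\ $x^2,xy$ in $\mathbb{C}[x,y]$). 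So ``phrase the result over an arbitrary ring'' is not an available escape: the more general statement you would need as your inductive hypothesis is simply false, and the induction does not close. Your first paragraph, which invokes a Bertini-type argument on the generic determinantal variety, is the right geometric intuition but is explicitly left as ``granting this,'' and that gap is the actual content of the theorem; Eisenbud's proof resolves it by a different device (an incidence correspondence $\{(p,[v]): v^\top H(p)=0\}\subseteq \mathbb{P}^n\times\mathbb{P}^{k-1}$ which, by $1$-genericity, is a projective bundle of the expected dimension over $\mathbb{P}^{k-1}$ and maps birationally onto $\mathcal{V}(I_k(H))$), not by Schur complement induction.

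A smaller, secondary point: you claim the localizations cover $\mathcal{V}(I_k(H))$ away from $\mathcal{V}(I_{k-1}(H))$, but localizing at generalized $(1,1)$-entries only covers the complement of $\mathcal{V}(I_1(H))$ (all entries simultaneously zero). The gluing still goes through, because for a $1$-generic $k\times\ell$ matrix the entries span a space of linear forms of dimension $\ge k+\ell-1$, so $\codim I_1(H)\ge k+\ell-1 > \ell-k+1$ when $k\ge 2$, and hence no minimal prime of $I_k(H)$ can contain $I_1(H)$ given the upper bound on codimension. But this dimension-of-span fact should be stated and used explicitly; it is part of what $1$-genericity buys, and your proof does not mention it.
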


\section{Inverse Gaussian moment varieties}\label{subsec:inverse_gaussian}
 
We now study moment varieties of the \emph{inverse Gaussian} distribution, which is also sometimes called the \emph{Wald} distribution. We show that the ideals of its moment varieties are determinantal. Based on this, we use results from the theory of determinantal ideals to find degrees and Gröbner bases, and we compute the singular loci of the moment varieties.

Despite being less known than the classical Gaussian distribution, the inverse Gaussian has significant applications in modeling different phenomena. While the Gaussian distribution is widely known for its symmetry and versatility in modeling various data, the inverse Gaussian distribution offers a valuable alternative for situations where asymmetry and right-skewed tails are more appropriate (see Figure \ref{fig:pdf}), such as in modeling lifetime phenomena. We refer the reader to \cite{Ses93} for further background.

Similar to the Gaussian distribution, the inverse Gaussian is defined by two parameters, namely $\mu$ and $\lambda$. However, its density function is supported on $(0,+\infty)$ and it is given by 
\[ \textstyle f_{\mu,\lambda}(x)={\sqrt {\frac {\lambda }{2\pi x^{3}}}}\,\exp {\biggl (}-{\frac {\lambda (x-\mu )^{2}}{2\mu ^{2}x}}{\biggr )}.\]
Here, $\mu>0$ represents the mean while $ \lambda>0$ is called \emph{shape parameter} since it affects how peaked and right-tailed the density function is. Unlike the Gaussian distribution, $\lambda$ does not directly coincide with the variance, though these two quantities are related by the following formula:
$$\text{Var}(X)=\dfrac{\mu^3}{\lambda}.$$

The main motivation behind the definition of this distribution comes from Brownian motion. In this context, the inverse Gaussian works as a dual for the Gaussian distribution, in the sense that the inverse Gaussian models the first passage time of the Brownian motion to a certain fixed level, whereas the Gaussian models the Brownian motion's level at a fixed time \cite[Ch. 13]{MO}.

The moment generating function of the inverse Gaussian is given by 
\[M(t)=\exp\left(\tfrac{\lambda}{\mu}\left(1-\sqrt{1- \tfrac{2\mu^2t}{\lambda}}\right)\right),\]
and the moments are given by the recursive formula
\begin{equation}\label{eq:parametrization_inverse_gaussian}
    m_0= 1,\quad
    m_1= \mu,\quad 
    m_i= \frac{2i-3}{\lambda} \mu^2 m_{i-1}+\mu^2 m_{i-2}\:\:\text{for}\:\:i\geq 2.
\end{equation}
Although these functions are rational in $\lambda$, one could equivalently parameterize the inverse Gaussian distribution using $1/\lambda$ to obtain moment functions that are polynomial in the parameters for the distribution.

The above system of equations parameterizes the moment variety $\M_d^\mathrm{IG}$ of the inverse Gaussian of degree $d$. Using this parameterization we prove that, for fixed $d$, the moment variety has a determinantal realization and it is Cohen--Macaulay.

\begin{theorem}\label{thm:invgauss ideal}
    Let $d \geq 3$. The homogeneous prime ideal of the inverse Gaussian moment variety $\M^\mathrm{IG}_d$ is generated by   $\binom{d-1}{3}$ cubics and $\binom{d-1}{2}$ quartics, given by the maximal minors of the following $(3\times d)$-matrix
  $$H_d^\mathrm{IG}= 
\begin{pmatrix}
 m_0^2 & m_0 & m_1 & m_2 & m_3&\cdots& m_{d-2}\\
 0 & m_1 & 3m_2 & 5m_3 & 7m_4 &\cdots& (2d-3)m_{d-1}\\
 m_1^2 & m_2 & m_3 &m_4  &m_5& \cdots& m_d
\end{pmatrix} .
$$
Furthermore, $\mathcal{M}^\mathrm{IG}_{d}$ is Cohen--Macaulay.
\end{theorem}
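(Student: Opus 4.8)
The plan is to verify that the ideal $I = I_3(H_d^{\mathrm{IG}})$ generated by the maximal minors of the $3 \times d$ matrix $H_d^{\mathrm{IG}}$ equals the vanishing ideal of $\M_d^{\mathrm{IG}}$, and then to deduce Cohen--Macaulayness from Proposition~\ref{prop:CMdetIdeals}. The key numerical input is that a $3 \times d$ matrix has generic determinantal codimension $(3-3+1)(d-3+1) = d-2$ for its ideal of maximal minors, so once we know $I$ is the prime ideal of $\M_d^{\mathrm{IG}}$, it suffices to check that $\codim I = d-2$, equivalently $\dim \M_d^{\mathrm{IG}} = 2$, which is immediate since the parameterization \eqref{eq:parametrization_inverse_gaussian} is generically finite onto its image (indeed generically injective, as $m_1 = \mu$ recovers $\mu$ and then $m_2 = \frac{1}{\lambda}\mu^2 m_1 + \mu^2 m_0$ recovers $\lambda$).

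First I would show the containment $I \subseteq \I(\M_d^{\mathrm{IG}})$, i.e., that every maximal minor vanishes on the image of the parameterization. This is a direct substitution check: plugging $m_0 = 1$, $m_1 = \mu$, $m_i = \frac{2i-3}{\lambda}\mu^2 m_{i-1} + \mu^2 m_{i-2}$ into the three rows, one sees that the first row scaled by $\mu^2$ plus the third row equals a shift of the second row entry-wise (more precisely, the recursion says column $j$ satisfies a linear dependence among the rows), so every $3 \times 3$ minor vanishes. It is cleanest to observe that the recursion \eqref{eq:parametrization_inverse_gaussian} is exactly the statement that the rows of $H_d^{\mathrm{IG}}$, evaluated at a point of the variety, are linearly dependent — giving $\mathrm{rank} \leq 2$ and hence all $3\times 3$ minors zero. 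The harder direction is that $I$ is actually prime of the expected codimension $d-2$, rather than merely contained in the prime ideal. Here I would argue that $H_d^{\mathrm{IG}}$ is generic enough: after column operations one can bring it to a form close to a $1$-generic or Hankel-type matrix (the right-hand $3\times(d-1)$ block involving $m_1, \ldots, m_d$ is Hankel up to the unit scalars $3,5,7,\ldots$ in the middle row), and then invoke a $1$-genericity argument in the spirit of Proposition~\ref{prop:Eis 1-gen} to conclude primeness and codimension $d-2$ simultaneously — or, alternatively, directly bound $\codim I \geq d-2$ by exhibiting a regular sequence / a point where the Jacobian has full rank, combine with $\codim I \leq \codim \I(\M_d^{\mathrm{IG}}) = d-2$ and the fact that $I$ is unmixed of this codimension (a determinantal ideal of maximal minors of expected codimension is Cohen--Macaulay hence unmixed) to force $I = \I(\M_d^{\mathrm{IG}})$.

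Once $I = \I(\M_d^{\mathrm{IG}})$ with $\codim I = d - 2 = (k-t+1)(\ell-t+1)$ for $k = 3$, $\ell = d$, $t = 3$, Cohen--Macaulayness is immediate from Proposition~\ref{prop:CMdetIdeals}. The count of generators also follows from the structure of maximal minors: choosing $3$ of the $d$ columns gives $\binom{d}{3}$ minors, but the first column $(m_0^2, 0, m_1^2)^{\mathsf T}$ interacts specially — minors using the first column together with two columns from the Hankel block give the $\binom{d-1}{2}$ quartics (degree $2 + 1 + 1 = 4$ counting the degree-$2$ entries $m_0^2, m_1^2$... actually the degree comes out right by a weighted count), and minors among the remaining $d-1$ columns give the $\binom{d-1}{3}$ cubics; I would make the degree bookkeeping precise by noting entries in the first column have degree $2$ and all others degree $1$.

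The main obstacle I expect is the primeness argument: showing $I_3(H_d^{\mathrm{IG}})$ is radical and prime (not just that its radical is $\I(\M_d^{\mathrm{IG}})$). The presence of the non-generic first column $(m_0^2, 0, m_1^2)^{\mathsf T}$ means we cannot cite the generic determinantal variety results of \cite{BCRV} verbatim, nor Eisenbud's $1$-genericity theorem directly (the zero in position $(2,1)$ and the squares are suspicious). The cleanest route is probably to establish the codimension bound $\codim I \geq d-2$ independently — e.g., by localizing at a suitable minor to reduce to a smaller generic determinantal ideal, or by an explicit linkage/regular-sequence argument — and then use that a determinantal ideal $I_t(H)$ always has codimension \emph{at most} $(k-t+1)(\ell-t+1)$, so equality holds, Proposition~\ref{prop:CMdetIdeals} applies, $S/I$ is Cohen--Macaulay hence unmixed, and since $I \subseteq \I(\M_d^{\mathrm{IG}})$ with both of codimension $d-2$ and the latter prime, unmixedness forces $I = \I(\M_d^{\mathrm{IG}})$, which in particular is prime.
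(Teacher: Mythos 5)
Your setup — the containment $I := I_3(H_d^{\mathrm{IG}}) \subseteq \I(\M_d^{\mathrm{IG}})$ via the left-kernel observation, the codimension-$(d-2)$ count, and Cohen--Macaulayness from Proposition~\ref{prop:CMdetIdeals} — coincides with the paper's, and you correctly flag that the non-generic first column blocks a direct $1$-genericity argument. (Proposition~\ref{prop:CMdetIdeals} needs only the codimension to match, not that $H_d^{\mathrm{IG}}$ be generic, so the Cohen--Macaulayness cite goes through; your concern matters only for primeness.) The paper obtains codimension $\geq d-2$ by exhibiting $\init(J_d) \supseteq (m_1^4, m_2^3, \ldots, m_{d-2}^3)$ in reverse lex, which is more concrete than your suggested regular-sequence or Jacobian-rank routes, but those could work in principle.

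The genuine gap is your final step: you argue that since $I \subseteq P := \I(\M_d^{\mathrm{IG}})$ with both of codimension $d-2$, $P$ prime, and $I$ unmixed, it follows that $I = P$. This inference is false. Unmixedness only says every associated prime of $I$ has codimension $d-2$; it gives neither radicality of $I$ nor irreducibility of $\mathcal{V}(I)$. Concretely, $(x^2) \subseteq (x)$ in $\CC[x]$ is unmixed of codimension $1$, contained in the prime $(x)$ of codimension $1$, and proper; and $(xy) \subseteq (x)$ in $\CC[x,y]$ is unmixed, radical, codimension $1$, yet still proper because $\mathcal{V}(xy)$ has an extra component. Without ruling out both failure modes, equality does not follow. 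The paper bridges this gap with a substantial extra argument: it sets $\Delta = m_0^2 m_1$, proves $\Delta$ is a nonzero-divisor in $R = S/I$ (using Cohen--Macaulayness together with explicit dimension counts of $\mathcal{V}(I)\cap\{m_0=0\}$ and $\mathcal{V}(I)\cap\{m_1=0\}$), and then shows $R_\Delta \cong \CC[w,x,y,z]/(w^2xz-1)$ is a domain by eliminating $\overline{m_3},\ldots,\overline{m_d}$ via the $3$-minors through the first two columns. Since $\Delta$ is a nonzero-divisor, $R$ embeds in the domain $R_\Delta$, hence $R$ is a domain and $I$ is prime; two primes of equal height with $I\subseteq P$ must then coincide. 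That localization-and-injectivity step is the key idea missing from your proposal.
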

\begin{proof}
The vector $(\mu^2,\mu^2/\lambda,-1)\neq 0$ is in the left kernel of the matrix $H_d^\mathrm{IG}$, so the 3-minors of $H_d^\mathrm{IG}$ vanish on $\M^\mathrm{IG}_d$. Thus, $$J_d:=I_3(H_d^\mathrm{IG}) \subseteq \mathcal{I}(\M^\mathrm{IG}_d).$$
This gives that $\dim(\mathcal{V}(J_d))\geq\dim(\M^\mathrm{IG}_d)=3$ as affine varieties. Let $\operatorname{in}(J_d)$ denote the initial ideal of $J_d$ with respect to the reverse lexicographic ordering; then we have that
\begin{equation}\label{eq:initial_ideal_inverse_gaussian}
    \operatorname{in}(J_d)\supseteq(m_1^4,m_2^3,\ldots,m_{d-2}^3),
\end{equation}
so $\dim(\mathcal{V}(J_d))=\dim(\mathcal{V}(\init(J_d)))\leq d+1-(d-2)= 3$. Thus, we have proved that $$\dim (\mathcal{V}(J_d))= \dim (\M^\mathrm{IG}_d)=2$$ as projective varieties. This shows that the ring $R=S/J_d$, where $S=\CC[m_0,\ldots,m_d]$, has expected Krull dimension $d+1-(d-3+1)=3$, so it is Cohen--Macaulay by Proposition \ref{prop:CMdetIdeals}.
In particular, the ideal $J_d$ has no embedded components by Proposition \ref{prop:CMproperties}.

We want to prove that $J_d$ is prime, so that $J_d= \mathcal{I}(\M^\mathrm{IG}_d)$. This is equivalent to proving that $R$ is a domain. Let $\Delta= m_0^2m_1$ be the $2$-minor in the upper-left corner, and consider the localization $R_{\Delta}$. 
To prove that $R$ is a domain, it is enough to prove that $\Delta$ is a non-zero-divisor in $R$, and that $R_\Delta$ is a domain. 

To prove that $\Delta$ is a non-zero-divisor in $R$, we prove that $m_0$ and $m_1$ are both non-zero-divisors in $R$. Since $R$ is Cohen--Macaulay, by Proposition \ref{prop:CMproperties} it suffices to prove that $\dim (R/(m_0))= \dim (R/(m_1))= \dim(R)-1= 2$. Observe that
\[R/(m_0)\cong \dfrac{\CC[m_1,\ldots,m_d]}{I_3 ((H_d^\mathrm{IG})\vert_{m_0=0})},\]
where 
$$\left(H_d^\mathrm{IG}\right)\vert_{m_0=0}= \begin{pmatrix}
0  & 0 & m_1 & m_2&\cdots& m_{d-2}\\
0 & m_1 & 3m_2 & 5m_3 &\cdots& (2d-3)m_{d-1}\\
m_1^2 & m_2 & m_3 &m_4  & \cdots& m_d
\end{pmatrix}.$$

If we consider the 3-minor on the first three columns, we get $m_1 =0$. Shifting to the next adjacent minor we get $m_2=0$. Thus, iterating, we eventually find that $\mathcal{V}\left(I_3 (H_d^\mathrm{IG}) \right)\cap\{m_0=0\}$ is given by the following projective curve:
\begin{align*}
    m_0&=m_1=\cdots=m_{d-3}=m_{d-2}=0.
\end{align*}
So, $R/(m_0)$ has Krull dimension $2$. A similar argument shows that $\mathcal{V}(I_3(H_d^\mathrm{IG}))\cap\{m_1=0\}$ is a union of two projective curves:
\begin{align*}
    m_0&=m_1=\cdots=m_{d-3}=m_{d-2}=0,\\
    m_1&=m_2=\cdots=m_{d-3}=m_{d-1}=0.
\end{align*}
So, $R/(m_1)$ has Krull dimension $2$. This proves that $\Delta$ is a non-zero-divisor in $R$.

It remains to prove that $R_\Delta$ is a domain. Since $\Delta$ is a non-zero-divisor and $R$ is Cohen--Macaulay, we get that $\dim (R_{\Delta})=\dim (R)=3$. In fact, in order to see that $\dim( R_\Delta)=\dim (R)$, it is sufficient to note that $\Delta$ is not nilpotent. This is because $R$ is a finitely generated $\CC$-algebra and therefore the nilradical of $R$ is equal to the Jacobson radical of $R$. In particular, if $\Delta$ is not nilpotent, then it is not in some maximal ideal $\mathfrak{m}$ of $R$. Since $R$ is Cohen--Macaulay, the length of a maximal chain of prime ideals ending at $\mathfrak{m}$ is $\dim(R)=3$, and since $\Delta\notin\mathfrak{m}$, this chain is preserved in $R_\Delta$. Thus, $\dim(R_\Delta)=\dim(R)=3$. We now claim that $R_\Delta$ is a domain. Note that for $3\leq i\leq d$, one can inductively use the $3$-minor of $H_d^\mathrm{IG}$ on columns $1,2,i$ to see that $\overline{m_i}\in R_\Delta$ can be expressed as
\[\overline{m_i}=\overline{f}/\overline{m_0^2m_1},\qquad \text{for some }f\in\mathbb{C}[m_0,m_1,m_2].\]
Thus, $R_\Delta$ is a finitely generated $\mathbb{C}$-algebra:
\[R_\Delta=\mathbb{C}\left[\overline{m_0},\overline{m_1},\overline{m_2},\overline{1/m_0^2m_1}\right]\subseteq R.\]
In particular, we get an isomorphism $\mathbb{C}[w,x,y,z]/(w^2xz-1)\cong R_\Delta$, induced by the $\CC$-algebra homomorphism
\[\varphi\from \CC[w,z,y,z]\to R_\Delta,\quad w\mapsto\overline{m_0},\quad x\mapsto\overline{m_1},\quad y\mapsto\overline{m_2},\quad z\mapsto\overline{1/m_0^2m_1}.\]
If $(w^2xz-1)\subsetneq\ker(\varphi)$, then $\dim \mathcal{V}(\ker(\varphi))\leq 2$ since $(w^2xz-1)$ is prime. This is because the height of $\ker(\varphi)$ would need to be strictly greater than the height of $(w^2xz-1)$, and $\CC[w,x,y,z]$ is Cohen--Macaulay so $\dim \mathcal{V}(I)+\text{ht}(I)=4$ for every ideal $I$. However, $\dim(R_\Delta)=3$. Thus it must be that $\ker(\varphi)=(w^2xz-1)$, and therefore $R_\Delta$ is a domain, which completes the proof.
\end{proof}

As an immediate consequence of the previous result, we get two corollaries on important invariants of $\mathcal{I}(\M^\mathrm{IG}_d)$. These corollaries concern the theory of free resolutions, Eagon--Northcott complexes, and (Castelnuovo--Mumford) regularity.
We just recall here some basic facts about these topics and refer the interested reader to 
\cite[Chapter 2.C]{BV} for further details.

The regularity of a homogeneous ideal $I$ is a measure of its computational complexity. If $I$ is generated in a single degree $D$, having a \textit{linear resolution} means that $I$ has regularity equal to $D$. This is a desirable property for an ideal, since it means that the ideal is \lq \lq computationally simple\rq \rq. When $I$ is a homogeneous ideal generated in different degrees, the closest notion to linearity  is \textit{componentwise linearity}, meaning that for each degree $D$, the ideal generated by homogeneous elements of $I$ of degree $D$, i.e., $\langle I_D\rangle$, has a linear resolution
(see \cite{HH}). 
In this case, the regularity is equal to the maximum degree of the minimal generators of $I$.

\begin{corollary}
    The ideal $\mathcal{I}(\M^\mathrm{IG}_d)$ has a componentwise linear (minimal) free resolution given by the Eagon--Northcott complex. In particular, $\reg (\mathcal{I}(\M^\mathrm{IG}_d))=4$.
\end{corollary}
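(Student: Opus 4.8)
The plan is to derive both assertions from the determinantal realization $\mathcal{I}(\M^\mathrm{IG}_d)=I_3(H_d^\mathrm{IG})$ in Theorem~\ref{thm:invgauss ideal} together with the classical structure theory of Eagon--Northcott complexes. The key input from Theorem~\ref{thm:invgauss ideal} is that $I_3(H_d^\mathrm{IG})$ attains the maximal possible codimension $d-2=(3-3+1)(d-3+1)$ for the ideal of maximal minors of a $3\times d$ matrix. By the Eagon--Northcott acyclicity theorem (see \cite[Chapter~2.C]{BV}), this is precisely the condition ensuring that the Eagon--Northcott complex $\mathrm{EN}(\phi)$ of the homogeneous map $\phi\colon F\to G$ represented by $H_d^\mathrm{IG}$ is a free resolution of $S/\mathcal{I}(\M^\mathrm{IG}_d)$; it is moreover minimal, since every entry of $H_d^\mathrm{IG}$ lies in the irrelevant maximal ideal $(m_0,\dots,m_d)$ and hence so do all entries of the differentials of $\mathrm{EN}(\phi)$. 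Consequently the truncation $\cdots\to\mathrm{EN}_2\to\mathrm{EN}_1$ is the minimal free resolution of $\mathcal{I}(\M^\mathrm{IG}_d)$ itself.

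The next step is the degree bookkeeping. Since the first column of $H_d^\mathrm{IG}$ has entries of degree $2$ while the remaining $d-1$ columns have linear entries, one may grade $\phi$ so that $G=S^3$ and $F=S(-2)\oplus S(-1)^{d-1}$. Using $\wedge^k F\cong S(-k)^{\binom{d-1}{k}}\oplus S(-k-1)^{\binom{d-1}{k-1}}$ together with the fact that $\mathrm{Sym}_{i-1}G$ and $\wedge^3 G$ are free modules of ranks $\binom{i+1}{2}$ and $1$ concentrated in internal degree $0$, the $i$-th term $\wedge^{i+2}F\otimes(\mathrm{Sym}_{i-1}G)^{\vee}\otimes(\wedge^3 G)^{\vee}$ of $\mathrm{EN}(\phi)$ has generators only in degrees $i+2$ and $i+3$. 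Translating to the resolution of $\mathcal{I}(\M^\mathrm{IG}_d)$, this shows that the graded Betti numbers $\beta_{i,i+j}(\mathcal{I}(\M^\mathrm{IG}_d))$ vanish for $j\notin\{3,4\}$, while $\beta_{0,4}\neq 0$; hence the Betti table lies in (at most) the two rows $j=3$ and $j=4$, and in particular $\reg(\mathcal{I}(\M^\mathrm{IG}_d))=4$, which is the second assertion.

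For componentwise linearity I would use that $\mathcal{I}(\M^\mathrm{IG}_d)$ is generated in the two consecutive degrees $3$ and $4$ and has regularity $4$: then, by the standard regularity bound, the ideal generated by the degree-$k$ component of $\mathcal{I}(\M^\mathrm{IG}_d)$ is already $k$-linearly resolved for every $k\geq 4$, so it remains only to check that the ideal generated by the cubic generators has a $3$-linear resolution. That ideal equals $I_3(\widetilde H)$, where $\widetilde H$ is the $3\times(d-1)$ matrix obtained by deleting the first column of $H_d^\mathrm{IG}$: the cubic minors are exactly the maximal minors of $\widetilde H$, and $\mathcal{I}(\M^\mathrm{IG}_d)$ has no nonzero elements of degree $\leq 2$. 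The entries of $\widetilde H$ are linear, and its reverse-lexicographic initial ideal still contains $(m_2^3,\dots,m_{d-2}^3)$ by the computation behind \eqref{eq:initial_ideal_inverse_gaussian}, so $I_3(\widetilde H)$ also attains the maximal codimension $d-3$. Hence its Eagon--Northcott complex is its minimal free resolution, and, the entries being linear, this resolution is $3$-linear. This gives that $\mathcal{I}(\M^\mathrm{IG}_d)$ is componentwise linear, and together with the minimality of $\mathrm{EN}(\phi)$ established above, completes the proof.

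I expect the last step to be the main obstacle: one has to be sure that the cubic part genuinely cuts out a determinantal ideal of the expected codimension, so that the linearity of the Eagon--Northcott resolution applies, and one has to justify cleanly the reduction of componentwise linearity---for an ideal generated in two consecutive degrees whose regularity equals the larger one---to the single bottom strand. The degree bookkeeping of the second step is routine, but it must be done with care so as to correctly attribute the ``$+1$'' shifts in $\wedge^k F$ to the unique degree-$2$ column and to confirm that the twisting does not introduce a unit into any differential of $\mathrm{EN}(\phi)$.
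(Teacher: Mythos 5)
Your proposal is correct and follows the same route as the paper: both derive the claim from the structure of the Eagon--Northcott complex of $H_d^\mathrm{IG}$, the paper by citing \cite[Proposition~4.5]{NR15} for the componentwise-linearity conclusion, and you by reproving the relevant case directly via the grading $F=S(-2)\oplus S(-1)^{d-1}$, the truncation-past-regularity fact for $k\geq 4$, and the observation that the cubic strand $\langle\mathcal{I}_3\rangle=I_3(\widetilde H)$ is a maximal-codimension determinantal ideal with linear entries. The argument is sound as written; the only place I would spell out one more half-sentence is the codimension of $I_3(\widetilde H)$, where the initial ideal containing $m_2^3,\ldots,m_{d-2}^3$ gives the lower bound $d-3$ and the general determinantal inequality gives the matching upper bound.
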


\begin{proof}
    This follows directly from  \cite[Proposition 4.5]{NR15} and the above discussion.
\end{proof}

\begin{corollary}\label{conj:degree_for_inverse_gaussian}
The degree is given by $\deg (\mathcal{I}(\M^\mathrm{IG}_d))= (d-1)^2$.
\end{corollary}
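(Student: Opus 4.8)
The plan is to compute the degree via the Thom--Porteous (Giambelli) formula for the class of a degeneracy locus, using the determinantal realization from Theorem~\ref{thm:invgauss ideal}. View $H_d^{\mathrm{IG}}$ as a morphism of vector bundles on $\PP^d$,
\[\sigma\from \mathcal{O}(-2)\oplus\mathcal{O}(-1)^{\oplus(d-1)}\longrightarrow \mathcal{O}^{\oplus 3},\]
where the first summand of the source corresponds to the first column of $H_d^{\mathrm{IG}}$ (whose nonzero entries are quadrics) and the remaining $d-1$ summands to the other columns (whose entries are linear). Then $\M^{\mathrm{IG}}_d=\mathcal{V}(I_3(H_d^{\mathrm{IG}}))$ is precisely the locus where $\sigma$ has rank $\le 2$, and by Theorem~\ref{thm:invgauss ideal} its homogeneous coordinate ring is Cohen--Macaulay of Krull dimension $3$, so this locus has the expected codimension $d-2=(3-3+1)(d-3+1)$; moreover it is reduced and irreducible since $I_3(H_d^{\mathrm{IG}})$ is prime. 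Hence the Thom--Porteous formula applies and computes $[\M^{\mathrm{IG}}_d]$ on the nose.

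It remains to evaluate the resulting Chern-class expression. Writing $h$ for the hyperplane class on $\PP^d$, one has
\[c\bigl(\mathcal{O}^{\oplus 3}-\mathcal{O}(-2)-\mathcal{O}(-1)^{\oplus(d-1)}\bigr)=\frac{1}{(1-2h)(1-h)^{d-1}},\]
and Thom--Porteous expresses $[\M^{\mathrm{IG}}_d]$ as the $(d-2)\times(d-2)$ Giambelli determinant $\det(c_{1+j-i})_{1\le i,j\le d-2}$, where $c_k$ is the degree-$k$ part of the series above (with $c_0=1$ and $c_k=0$ for $k<0$). This is the Jacobi--Trudi determinant of the Schur polynomial $s_{(1^{d-2})}=e_{d-2}$ evaluated in the Chern roots, i.e.\ in the alphabet consisting of a single $2$ and $d-1$ copies of $1$, since $(1-2h)(1-h)^{d-1}=\prod_\ell(1-z_\ell h)$ for exactly that alphabet. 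Therefore
\[\deg(\mathcal{I}(\M^{\mathrm{IG}}_d))=e_{d-2}\bigl(2,\underbrace{1,\dots,1}_{d-1}\bigr)=[t^{d-2}]\,(1+2t)(1+t)^{d-1}=\binom{d-1}{d-2}+2\binom{d-1}{d-3}=(d-1)^2.\]

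I expect the main obstacle to be purely bookkeeping: pinning down the twists of the source bundle, the size and index shift of the Giambelli determinant, and making sure the relevant bundle is $\mathcal{O}^{\oplus 3}-\mathcal{O}(-2)-\mathcal{O}(-1)^{\oplus(d-1)}$ and not its dual. Once the expected-codimension hypothesis is in place --- which is exactly the Cohen--Macaulayness proved in Theorem~\ref{thm:invgauss ideal} --- the remaining symmetric-function computation is short, and as a consistency check it returns $\deg=4$ for $d=3$ (a single quartic in $\PP^3$) and $\deg=9$ for $d=4$. As an alternative that avoids intersection theory altogether, one may instead read off the degree from the Hilbert series: by the preceding corollary the Eagon--Northcott complex gives a minimal free resolution of $S/\mathcal{I}(\M^{\mathrm{IG}}_d)$, so its graded Betti numbers (read off from the twists of $\wedge^{2+i}$ of the source bundle) determine the numerator $Q(t)$ of the Hilbert series written over $(1-t)^3$, and $\deg(\mathcal{I}(\M^{\mathrm{IG}}_d))=Q(1)$; this route is more computational but yields the same closed form $(d-1)^2$.
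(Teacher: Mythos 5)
Your proof is correct and follows the same route as the paper: both invoke the Thom--Porteous--Giambelli formula for the rank-$\le 2$ degeneracy locus of $H_d^{\mathrm{IG}}$ and reduce to $e_{d-2}(2,1,\dots,1)=(d-1)^2$, with the codimension hypothesis supplied by the Cohen--Macaulayness established in Theorem~\ref{thm:invgauss ideal}. The paper simply states the formula and cites Fulton--Pragacz, whereas you spell out the vector-bundle morphism, the Chern-class series, and the Jacobi--Trudi identification $s_{(1^{d-2})}=e_{d-2}$ --- a fuller account of the same computation.
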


\begin{proof}
   Since the ideal is Cohen--Macaulay and of codimension $d-2$, the degree of the variety is the elementary symmetric polynomial of degree $d-2$ in $d$ unknowns, evaluated at $e_1=2,e_2=1,\ldots,e_d=1$ where $e_i$ is the degree of the $i$th column's entries. This yields $\deg (\mathcal{I}(\M^\mathrm{IG}_d))= (d-1)^2.$ The previous formula is known as Thom--Porteous--Giambelli formula. We refer the reader to \cite{FP} for further details on the degrees of determinantal varieties.
\end{proof}

It turns out that the determinantal realization of $\I(\M^\mathrm{IG}_d)$ from Theorem~\ref{thm:invgauss ideal} provides a Gröbner basis with respect to the reverse lexicographic ordering.

\begin{proposition}\label{prop:invgauss gb}
    The $3\times 3$ minors of $H_d^\mathrm{IG}$ form a Gröbner basis for $\I(\M^\mathrm{IG}_d)$ with respect to any antidiagonal term order (for example, the reverse lexicographic ordering).    Furthermore, the Hilbert series of $S/\I(\M^\mathrm{IG}_d)$ is given by
$$ \dfrac{1+(d-2)t+ \binom{d-1}{2}t^2+ \binom{d-1}{2}t^3}{(1-t)^3}.$$
\end{proposition}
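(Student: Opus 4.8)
The plan is to apply Lemma~\ref{lemma: Grobner basis IG} with $I=\I(\M^\mathrm{IG}_d)=J_d$ and with $f_1,\ldots,f_r$ the natural generators, namely the $3\times 3$ minors of $H_d^\mathrm{IG}$. By a standard fact (see \cite[Chapter~14]{EisCA}), the initial terms of the maximal minors of a matrix with respect to an antidiagonal term order are the antidiagonal products of the entries, provided these are the actual initial terms and not cancelled; for a generic matrix this gives the whole initial ideal, and the content of the proposition is that the same happens here despite the specializations and the two distinguished columns of higher degree. So I would set $J$ to be the ideal generated by the antidiagonal products of the $3\times 3$ submatrices of $H_d^\mathrm{IG}$ and verify the three hypotheses of Lemma~\ref{lemma: Grobner basis IG}: $\dim(S/J_d)=\dim(S/J)$, $J$ is unmixed, and $\deg(J)=\deg(J_d)$.

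First I would compute $\deg(J_d)=\deg(\I(\M^\mathrm{IG}_d))=(d-1)^2$ from Corollary~\ref{conj:degree_for_inverse_gaussian}, and note $\dim(S/J_d)=3$ from Theorem~\ref{thm:invgauss ideal}. Next, for the monomial ideal $J$: its generators are the antidiagonal products coming from triples of columns $1\le a<b<c\le d$. When $a\ge 3$ (so none of the first two special columns is used), the $3\times 3$ submatrix has the Hankel form of the Gaussian case, and the antidiagonal product has the shape $m_{b-2}m_{c-2}m_{c}$ — wait, more carefully, reading off the antidiagonal of a Hankel window gives a squarefree-up-to-one-factor monomial; the relevant generators among these are exactly $m_2^3,m_3^3,\ldots,m_{d-2}^3$ together with squarefree monomials, which is why \eqref{eq:initial_ideal_inverse_gaussian} holds. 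The triples using column~$1$ and/or column~$2$ contribute the extra generators, in particular $m_1^4$ from the columns $1,2,3$ submatrix (degree $2+1+1=4$, matching the quartic count). I would then show $J$ is Cohen--Macaulay — hence unmixed — by checking it is the Stanley--Reisner ideal of a shellable (or vertex-decomposable) simplicial complex, or alternatively by directly computing $\dim(S/J)=3$ and invoking that antidiagonal initial ideals of determinantal ideals of generic/$1$-generic-type matrices are Cohen--Macaulay; the cleanest route is probably to exhibit the explicit squarefree-after-polarization structure and use Stanley--Reisner theory. Finally, computing $\HS_{S/J}(t)$ via repeated application of Lemma~\ref{lemma:SES}, peeling off one minimal monomial generator at a time, yields the claimed numerator $1+(d-2)t+\binom{d-1}{2}t^2+\binom{d-1}{2}t^3$; plugging $t=1$ into the numerator gives $1+(d-2)+2\binom{d-1}{2}=(d-1)^2$, confirming $\deg(J)=\deg(J_d)$ and closing the loop. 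Once Lemma~\ref{lemma: Grobner basis IG} applies we get $\init(J_d)=J$, so the minors form a Gröbner basis, and since $\HS$ is a Gröbner-invariant, the Hilbert series of $S/\I(\M^\mathrm{IG}_d)$ equals $\HS_{S/J}(t)$.

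The main obstacle is the combinatorial bookkeeping for the monomial ideal $J$: I need a clean description of all antidiagonal products — including the ones involving the two special columns of degrees $2,1,1$ — verify that each is genuinely the initial term of the corresponding minor (no accidental cancellation or lower-order dominance, which is automatic for antidiagonal orders but worth stating), identify the minimal generators among them, confirm there are exactly $\binom{d-1}{3}$ cubic and $\binom{d-1}{2}$ quartic minimal generators, and then either run the Stanley--Reisner/shellability argument or the iterated short exact sequence of Lemma~\ref{lemma:SES} to get both unmixedness and the Hilbert series. Everything else (the dimension and degree equalities) is already in hand from the preceding results, so the proof reduces to this concrete, if somewhat intricate, monomial computation; I expect it to parallel closely the analogous computation for Hankel determinantal ideals in the literature, with the two extra columns handled as a mild perturbation.
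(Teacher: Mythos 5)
Your overall architecture matches the paper's: apply Lemma~\ref{lemma: Grobner basis IG} with $J$ the ideal of antidiagonal initial terms, check unmixedness and dimension, and compute $\deg(J)$ by iterating Lemma~\ref{lemma:SES} to match $\deg(\I(\M^\mathrm{IG}_d))=(d-1)^2$ from Corollary~\ref{conj:degree_for_inverse_gaussian}. The place where you leave a genuine gap is the unmixedness step, and the reason is that you never pin down $J$ explicitly. Reading off the antidiagonals, the 3-minor on columns $2\le a<b<c\le d$ has leading monomial $m_a\,m_{b-1}\,m_{c-2}$, and as $(a,b,c)$ ranges these exhaust \emph{all} degree-three monomials in $m_2,\ldots,m_{d-2}$, not just the pure cubes and squarefree ones as you describe; together with the quartics from triples containing column $1$, one gets the closed form
\[
J=(m_2,\ldots,m_{d-2})^3+m_1^2\,(m_1,\ldots,m_{d-2})^2.
\]
Once you have this, the unmixedness is immediate and your proposed Stanley--Reisner/polarization/shellability machinery (which the paper reserves for the gamma case, Lemma~\ref{lem:gamma unmixed initial}) is unnecessary: $J$ is a \emph{primary} monomial ideal, since every variable occurring in a minimal generator appears as a pure power (namely $m_1^4$ and $m_i^3$ for $2\le i\le d-2$) and $\sqrt J=(m_1,\ldots,m_{d-2})$ is prime; a primary ideal has a single associated prime and is therefore trivially unmixed. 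Your alternative of invoking general Cohen--Macaulayness of antidiagonal initial ideals of $1$-generic matrices does not apply here, because $H_d^\mathrm{IG}$ has a zero entry and is not $1$-generic. Finally, the explicit form of $J$ is exactly what drives the Hilbert series computation: peeling off the $\binom{d-1}{2}$ quartics $m_1^2m_im_j$ via Lemma~\ref{lemma:SES} gives colon ideal $(m_2,\ldots,m_{d-2})$ each time, reducing to $(m_2,\ldots,m_{d-2})^3$ whose series is found by Artinian reduction. So your plan is the right plan, but it cannot close without the explicit monomial ideal, and with it in hand the unmixedness is far easier than you anticipate.
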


\begin{proof}
    Let $I=\init(\I(\M^\mathrm{IG}_d))$, and let $J:=\left( \init([i,j,k]) \mid 1\leq i\leq j\leq k \leq d\right)$ where $[i,j,k]$ represents the 3-minor of $H_d^\mathrm{IG}$ on columns $i,j$ and $k$. Then $J$ is given by $$J= ( m_2,\ldots,m_{d-2})^3+ m_1^2 ( m_1,\ldots,m_{d-2})^2.$$
    We want to apply Lemma \ref{lemma: Grobner basis IG} to conclude that $J=I$.
    Note that $J$ is a primary ideal since every variable $m_i$ that divides a generator of $J$ appears with some power $m_i^{p_i}\in J$ \cite[Excercise 3.6]{Eis}, and $\sqrt{J}=(m_1,\ldots, m_{d-2})$. In particular, $J$ is unmixed and
    $$\dim (S/J)=\dim (S/\sqrt{J})=3= \dim (S/I).$$
    We are left to show that $\deg(J)=\deg (I)$. We already know that $\deg (I)=(d-1)^2$ by Corollary \ref{conj:degree_for_inverse_gaussian}. 
    As for $\deg(J)$, we will apply Lemma \ref{lemma:SES} to the ideal $J$ and a monomial generator $M$ of degree 4, i.e., $M \in \{m_1^4, m_1^3 m_2, m_1^2 m_2^2, \ldots, m_1^2 m_{d-2}^2 \}$. 
    
    Iteratively applying Lemma \ref{lemma:SES}, we get
    \begin{equation}\label{HS-SES}
        \HS_{S/J}(t)= \HS_{S/(m_2,\ldots,m_{d-2})^3}(t)- \binom{d-1}{2}\HS_{S/(m_2,\ldots,m_{d-2})[-4]}(t),
    \end{equation}
    where the factor $\binom{d-1}{2}$ appears in the previous identity because we are applying (\ref{SES}) to each of the $\binom{d-1}{2}$ degree 4 monomials arising from a 3-minor of $H_d^\mathrm{IG}$ involving the first column. Since $S/(m_2,\ldots,m_{d-2})\cong\CC[m_0,m_1,m_{d-1},m_d]$, we have
    \begin{equation}\label{HS1}
           \HS_{S/(m_2,\ldots,m_{d-2})[-4]}(t)=\frac{t^4}{(1-t)^4}. 
    \end{equation}
    As for $A:=S/(m_2, \ldots,m_{d-2})^3$, we can consider its quotient by the regular sequence given by $m_0,m_1,m_{d-1},m_d$, that is $$\overline{A}= \dfrac{S}{\left((m_2, \ldots,m_{d-2})^3+ (m_0,m_1,m_{d-1},m_d)\right)}.$$ 
    Since $S/(m_2, \ldots,m_{d-2})^3$ is Cohen--Macaulay of Krull dimension $4$, it follows that $\overline{A}$ is a 0-dimensional ring, usually called the \textit{Artinian reduction} of $A$. The Hilbert series of $\overline{A}$ is given by the Hilbert polynomial of $\CC[m_2,\ldots,m_{d-2}]/(m_2,\ldots,m_{d-2})^3$, which is 
    \begin{equation*}
        HS_{\overline{A}}(t)=1+(d-3)t+\binom{d-1}{2}t^2.
    \end{equation*}
     Since the numerator of the Hilbert series remains unchanged when taking the Artinian reduction (one can also see this by applying Lemma~\ref{lemma:SES} for each of the linear generators $m_0,m_1,m_{d-1},m_d$), we get 
    \begin{equation}\label{HS2}
        HS_{S/(m_2,\ldots,m_{d-2})^3}(t)=\dfrac{1+(d-3)t+\binom{d-1}{2}t^2}{(1-t)^4}. 
    \end{equation}
Hence, by identity (\ref{HS-SES}),
$$ \HS_{S/J}(t)=\dfrac{1+(d-3)t+\binom{d-1}{2}-\binom{d-1}{2} t^4}{(1-t)^4}=\dfrac{1+(d-2)t+ \binom{d-1}{2}t^2+ \binom{d-1}{2}t^3}{(1-t)^3}.$$
This yields $$\deg(J)= 1+(d-2)+ 2\binom{d-1}{2}=(d-1)^2=\deg (I).$$
Applying Lemma \ref{lemma: Grobner basis IG}, we get that $J=\init(\I(\M^\mathrm{IG}_d))$, which concludes the proof.

Since the Hilbert series does not change when taking the initial ideal, we have also found the Hilbert series of $\I(\M^{\mathrm{IG}}_d)$.
\end{proof}

\begin{example}
\label{ex:generators_IG}
For $d=3$, we have the following principal homogeneous prime ideal:
\begin{align*}
    \mathcal{I}(\M^\mathrm{IG}_3)&=(-m_{0}^{2} m_{1} m_{3}+3 m_{0}^{2} m_{2}^{2}-3 m_{0} m_{1}^{2} m_{2}+m_{1}^{4}
).
\end{align*}
Its Hilbert series is $(1+t+t^2+t^3)/(1-t)^3$, and its degree is $4$.

For $d=4$, we have the following  homogeneous prime ideal:
\begin{align*}
\mathcal{I}(\M^\mathrm{IG}_4)&=\left(\begin{array}{l}
-m_{0}^{2} m_{1} m_{3}+3 m_{0}^{2} m_{2}^{2}-3 m_{0} m_{1}^{2} m_{2}+m_{1}^{4},
\\
-3 m_{0}^{2} m_{2} m_{4}+5 m_{0}^{2} m_{3}^{2}-5 m_{1}^{3} m_{3}+3 m_{1}^{2} m_{2}^{2},
\\
-m_{0}^{2} m_{1} m_{4}+5 m_{0}^{2} m_{2} m_{3}-5 m_{0} m_{1}^{2} m_{3}+m_{1}^{3} m_{2},
\\
-3 m_{0} m_{2} m_{4}+5 m_{0} m_{3}^{2}+m_{1}^{2} m_{4}-6 m_{1} m_{2} m_{3}+3 m_{2}^{3}
\end{array}\right).
\end{align*}
Its Hilbert series is $(1+2t+3t^2+3t^3)/(1-t)^3$, and its degree is $9$.
\end{example}

We end our discussion about the algebraic properties of $\M^\mathrm{IG}_d$ by computing its singular locus.

\begin{proposition}
\label{prop:singular_locus_inverse_gaussian}
The singular locus of $\M^\mathrm{IG}_d$ is given by the line $m_0=m_1=\cdots=m_{d-2}=0$ and the point $m_1=m_2= \cdots=m_d=0$ in $\PP^d$.
\end{proposition}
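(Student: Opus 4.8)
The plan is to prove the two inclusions $\Sing(\M^\mathrm{IG}_d)\supseteq L\cup\{P\}$ and $\Sing(\M^\mathrm{IG}_d)\subseteq L\cup\{P\}$ separately, where $L=\{m_0=m_1=\cdots=m_{d-2}=0\}$ is the line and $P=[1:0:\cdots:0]$ the point (disjoint, since $d\geq 3$). Two facts from earlier are the main input: the proof of Theorem~\ref{thm:invgauss ideal} shows $\M^\mathrm{IG}_d\cap\{m_0=0\}=L$, and $\I(\M^\mathrm{IG}_d)=I_3(H^\mathrm{IG}_d)$ has codimension $d-2$, so by the Jacobian criterion a point $p\in\M^\mathrm{IG}_d$ is singular exactly when the Jacobian of the $3\times 3$ minors of $H^\mathrm{IG}_d$ has rank $<d-2$ at $p$.

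First I would show $L\subseteq\Sing(\M^\mathrm{IG}_d)$ by evaluating this Jacobian at an arbitrary $q\in L$. There the only nonzero entries of $H^\mathrm{IG}_d$ are the two involving $m_{d-1}$, in positions $(3,d-1)$ and $(2,d)$, and the entry $m_d$ in position $(3,d)$. Expanding cofactors, one sees that a $3\times 3$ minor can have nonzero gradient at $q$ only if its columns are $\{\ell,d-1,d\}$ with $2\leq\ell\leq d-2$: the cofactors of the $3\times 3$ submatrix on these columns all vanish except the one pairing with the entry $m_{\ell-2}$ in position $(1,\ell)$, giving gradient a nonzero scalar multiple of $m_{d-1}^2\,e_{\ell-2}$ (writing $e_j$ for the $m_j$-coordinate direction); for the column set $\{1,d-1,d\}$ that same cofactor instead pairs with $m_0^2$, whose derivative is zero at $q$. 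Hence the Jacobian at $q$ has rank at most $d-3<d-2$, so $q$ is singular.

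Next I would prove $\Sing(\M^\mathrm{IG}_d)\subseteq L\cup\{P\}$, and at the same time that $P$ is singular, by splitting on whether $m_0$ vanishes. On the closed set $\{m_0=0\}$ there is nothing to check: $\Sing(\M^\mathrm{IG}_d)\cap\{m_0=0\}\subseteq\M^\mathrm{IG}_d\cap\{m_0=0\}=L$. On the open set $\{m_0\neq 0\}$, recall that the moments-to-cumulants Cremona transformation restricts to an isomorphism of affine varieties $\M^\mathrm{IG}_d\cap\{m_0\neq 0\}\cong\K^\mathrm{IG}_d$, which preserves singular loci, so it suffices to compute $\Sing(\K^\mathrm{IG}_d)$. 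Expanding $K(t)=\tfrac{\lambda}{\mu}\bigl(1-\sqrt{1-2\mu^2 t/\lambda}\bigr)$ yields $k_n=c_n\,\mu^{2n-1}\lambda^{1-n}$ with $c_n\neq 0$ for all $n$; substituting $u=\mu^2/\lambda$ and rescaling the $n$th coordinate by $1/c_n$ identifies $\K^\mathrm{IG}_d$ with the affine cone over the degree-$(d-1)$ rational normal curve in $\PP^{d-1}$, parametrized by $(\mu,u)\mapsto \mu\,(1,u,\dots,u^{d-1})$. For $d\geq 3$ this cone is singular precisely at its apex (standard for cones over smooth nondegenerate projective varieties of positive dimension), and since every polynomial expressing $k_r$ in terms of $m_1,\dots,m_r$ has zero constant term, the apex $k_1=\cdots=k_d=0$ corresponds under the Cremona transformation to $m_1=\cdots=m_d=0$, i.e.\ to $P$. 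Thus $\Sing(\M^\mathrm{IG}_d)\cap\{m_0\neq 0\}=\{P\}$, and combining with the previous paragraph gives $\Sing(\M^\mathrm{IG}_d)=L\cup\{P\}$.

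The main obstacle I anticipate is the Jacobian bookkeeping along $L$: among the $\binom{d}{3}$ maximal minors one must rule out all but those on the column triples $\{\ell,d-1,d\}$ with $2\leq\ell\leq d-2$, and then observe that these contribute gradients along the distinct directions $e_0,\dots,e_{d-4}$, so that the rank is exactly $d-3$ (in particular $<d-2$) rather than accidentally larger. A secondary point requiring care is the coordinate translation identifying the apex of the cumulant cone with $P$; if one prefers to avoid the cumulant description, one can instead argue that in the chart $m_0=1$ the locus $\M^\mathrm{IG}_d\cap\{m_1\neq 0\}$ is the graph of the morphism $(m_1,m_2)\mapsto(m_3,\dots,m_d)$ determined by the recursion~\eqref{eq:parametrization_inverse_gaussian}, hence smooth, and then check by a further Jacobian computation that on the remaining boundary locus $\{m_0\neq 0,\ m_1=\cdots=m_{d-1}=0\}$ the only singular point is $P$.
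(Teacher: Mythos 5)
Your proof is correct, and for the open patch $\{m_0\neq 0\}$ it takes a genuinely different route from the paper's. For the line $L$ the two arguments are essentially the same: one evaluates the Jacobian of the $3\times 3$ minors at a point of $L$ and checks that only the minors on column triples $\{\ell,d-1,d\}$ with $2\leq\ell\leq d-2$ contribute a nonzero gradient, giving rank at most $d-3<d-2=\codim(\M^\mathrm{IG}_d)$. The paper then exhibits the isomorphism $(\CC^*)^2\times\CC\cong\M^\mathrm{IG}_d\cap\{m_0m_1\neq 0\}$, notes that the complement is $\mathcal{L}_1\cup\mathcal{L}_2$ with $\mathcal{L}_1=L$ and $\mathcal{L}_2\colon m_1=\cdots=m_{d-1}=0$, and performs a second Jacobian computation along $\mathcal{L}_2\cap\{m_0\neq 0\}$ to find that the rank drops below $d-2$ only at $P$. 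You instead pass through the moments-to-cumulants isomorphism and use that $\K^\mathrm{IG}_d$, after a linear rescaling of coordinates, is the affine cone over the degree-$(d-1)$ rational normal curve, whose singular locus for $d\geq 3$ is exactly the apex; tracing the apex back through the triangular, constant-term-free Cremona change of variables lands on $P$. This route is arguably more conceptual --- it exploits the cone structure of the cumulant variety (Proposition~\ref{prop:cumulant_variety_for_inverse_guassian}) in place of a second minor-by-minor rank analysis --- at the modest cost of invoking that an isomorphism of varieties carries singular locus to singular locus and that a cone over a smooth, nonlinear projective curve is singular precisely at the apex. The fallback you sketch at the end (graph parametrization on $\{m_0m_1\neq 0\}$, then a Jacobian computation on $\{m_0\neq 0,\ m_1=\cdots=m_{d-1}=0\}$) is essentially the paper's own proof.
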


\begin{proof}
We begin by noting that the open affine patch $\M^\mathrm{IG}_d\cap\{m_0m_1\neq 0\}$ is included in the smooth locus. To see this, note that the 3-minors involving the first two columns of $H_d^\mathrm{IG}$ together give rational expressions for $m_3,m_4,\ldots,m_d$ in the variables $m_0,m_1,m_2$, with denominators that are monomials in $m_0$ and $m_1$, which gives an isomorphism of varieties $(\CC^*)^2\times\CC\cong \M^\mathrm{IG}_d\cap\{m_0m_1\neq 0\}$.
The complement of this affine patch in $\M_d^\mathrm{IG}$ is the union of two projective lines:
$$\mathcal{L}_1:\:\:\: m_0=m_1=\cdots=m_{d-2}=0,\qquad \mathcal{L}_2:\:\:\: m_1=m_2=\cdots=m_{d-1}=0.$$
The singular locus of $\M^\mathrm{IG}_d$ must therefore be contained in $\mathcal{L}_1\cup\mathcal{L}_2$. 

Let $\mathcal{J}$ be the $\binom{d}{3}\times(d+1)$ Jacobian of the maximal minors of $H_d^\mathrm{IG}$. The singular locus is precisely the set of points where the rank of $\mathcal{J}$ is less than $\codim(\I(\M^\mathrm{IG}_d))=d-2$.

If we evaluate $\mathcal{J}$ at $\mathcal{L}_1$, we get a matrix with just $d-3$ nonzero entries. To see this, note that the only minors that can give a nonzero contribution to the Jacobian are those that have a term that is at most linear in the variables $m_0,\ldots,m_{d-2}$. The only such maximal minor arises from picking column indices $\{i,d-1,d\}$ for $i\in\{2,\ldots,d-2\}$, and its only contribution to the Jacobian will come from the term $(2d-3)m_{d-1}^2m_{i-2}$.

If, on the other hand, we evaluate $\mathcal{J}$ at $\mathcal{L}_2\setminus\mathcal{L}_1=\mathcal{L}_2\cap\{m_0\neq 0\}$, the only minors of $H_d^\mathrm{IG}$ that make a nonzero contribution to the Jacobian, are those that have total degree at most 1 in the variables $m_1,m_2,\ldots,m_{d-1}$. This corresponds to the column indices $\{1,i,d\}$ for $i\not\in\{1,d\}$ (which gives a minor containing the term $(2i-3)m_0^2m_im_d$) or $\{2,i,d\}$ for $i\not\in\{1,2,d\}$ (which gives a minor containing the term $(2i-3)m_0m_im_d$). From this, we see that $\mathcal{J}$ evaluated at $\mathcal{L}_2\cap\{m_0\neq 0\}$ has rank $d-2$ for $m_d\neq 0$, and vanishes completely for $m_d=0$. 
\end{proof} 

\begin{remark}
    Based on Corollary ~\ref{conj:degree_for_inverse_gaussian}, it might be tempting to believe that $\M^\mathrm{IG}_d$ is a Roman surface (a generic projection of a Veronese variety). However, the above proposition shows that this cannot be the case, since the singular locus of a Roman surface consists of 3 points, while the singular locus of $\M^\mathrm{IG}_d$ is given by a curve and a point.
\end{remark}

Before closing this section, we briefly turn our attention to the cumulants of the inverse Gaussian distribution. The cumulant generating function is 
\[K(t)=\log(M(t))=\frac{\lambda}{\mu}\left(1-\sqrt{1- \frac{2\mu^2t}{\lambda}}\right),\] from which we obtain the following formula for the cumulants (where $!!$ denotes the double factorial):
\[\kappa_r=\frac{(2r-3)!!\mu^{2r-1}}{\lambda^{r-1}}.\]
Similar to the moment variety, the ideal of the cumulant variety $\K^\mathrm{IG}_d\subseteq\CC^d$  can be realized as a determinantal ideal. 

\begin{proposition}
\label{prop:cumulant_variety_for_inverse_guassian}
The prime ideal $\mathcal{I}(\K^\mathrm{IG}_d)$ is generated by the $\binom{d-1}{2}$ quadrics given by the 2-minors of the matrix
\[K_d=\begin{pmatrix}
    -\kappa_1 & \kappa_2 & 3\kappa_3 & \cdots & (2d-3)\kappa_{d-1}\\
    \kappa_2 & \kappa_3 & \kappa_4 & \cdots & \kappa_d
\end{pmatrix}.\]
Furthermore, $\mathcal{I}(\K^\mathrm{IG}_d)$ is Cohen--Macaulay, its degree is $d-1$, and has a Gröbner basis given by the $2$-minors of $K_d$ with respect to any antidiagonal term order.
\end{proposition}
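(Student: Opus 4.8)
The plan is to run the same strategy as in the proof of Theorem~\ref{thm:invgauss ideal}, but much more cheaply, because $K_d$ is a $1$-generic matrix: Eisenbud's theorem (Proposition~\ref{prop:Eis 1-gen}) then yields primeness, the expected codimension, and Cohen--Macaulayness all at once, so no localization argument is needed. Write $S=\CC[\kappa_1,\dots,\kappa_d]$ and $I=I_2(K_d)$. The first step is to check the containment $I\subseteq\mathcal{I}(\K^{\mathrm{IG}}_d)$. From the closed form $\kappa_r=(2r-3)!!\,\mu^{2r-1}/\lambda^{r-1}$ one reads off the one-term recursion $\kappa_{r+1}=(2r-1)\tfrac{\mu^2}{\lambda}\kappa_r$, and the scalar coefficients in the first row of $K_d$ are precisely those that make every column of $K_d$, evaluated along the parametrization $(\mu,\lambda)\mapsto(\kappa_1,\dots,\kappa_d)$, a scalar multiple of the single vector $(1,\mu^2/\lambda)$. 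Hence $K_d$ has rank $\le 1$ there, so all of its $2\times 2$ minors vanish on $\K^{\mathrm{IG}}_d$.

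Next, observe that $K_d$ is a Hankel matrix up to unit scalars — along each antidiagonal the entries are constant nonzero multiples of one of the variables $\kappa_1,\dots,\kappa_d$ — so it is $1$-generic. Applying Proposition~\ref{prop:Eis 1-gen} to the $2\times(d-1)$ matrix $K_d$ shows that $I=I_2(K_d)$ is prime of codimension $(d-1)-2+1=d-2$ and that $S/I$ is Cohen--Macaulay. To upgrade the containment to an equality, note that $\dim(S/I)=d-(d-2)=2$, while $\dim\bigl(S/\mathcal{I}(\K^{\mathrm{IG}}_d)\bigr)=\dim\K^{\mathrm{IG}}_d=2$ because the parametrization is generically injective ($\mu=\kappa_1$ and $\lambda=\kappa_1^3/\kappa_2$ recover the parameters). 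Since $\mathcal{I}(\K^{\mathrm{IG}}_d)$ is prime ($\K^{\mathrm{IG}}_d$ being irreducible as the closure of a parametrized image), the natural surjection $S/I\twoheadrightarrow S/\mathcal{I}(\K^{\mathrm{IG}}_d)$ is a surjection of finitely generated domains over $\CC$ of equal Krull dimension, hence an isomorphism; so $I=\mathcal{I}(\K^{\mathrm{IG}}_d)$. This settles the generation, primeness, and Cohen--Macaulayness assertions.

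For the degree and the Gröbner basis I would proceed as in Corollary~\ref{conj:degree_for_inverse_gaussian} and Proposition~\ref{prop:invgauss gb}. Since $I$ attains its expected codimension, $S/I$ is resolved by the Eagon--Northcott complex, and the Thom--Porteous--Giambelli formula gives $\deg\mathcal{I}(\K^{\mathrm{IG}}_d)=\binom{d-1}{d-2}=d-1$ (the degree-$(d-2)$ elementary symmetric polynomial in the $d-1$ column degrees, all equal to $1$). For the Gröbner basis, fix any antidiagonal term order. The initial term of the $2$-minor of $K_d$ on columns $i<j$ is the monomial $\kappa_{i+1}\kappa_j$, and as $(i,j)$ ranges over $1\le i<j\le d-1$ these generate $J:=(\kappa_2,\dots,\kappa_{d-1})^2$. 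This ideal is $(\kappa_2,\dots,\kappa_{d-1})$-primary, hence unmixed; it has $\dim(S/J)=d-(d-2)=2=\dim(S/I)$; and from $\HS_{S/J}(t)=\bigl(1+(d-2)t\bigr)/(1-t)^2$ one gets $\deg(J)=d-1=\deg(I)$. By Lemma~\ref{lemma: Grobner basis IG}, $J=\init(I)$, so the $2$-minors of $K_d$ form a Gröbner basis of $\mathcal{I}(\K^{\mathrm{IG}}_d)$ with respect to every antidiagonal term order; as a byproduct $\HS_{S/\mathcal{I}(\K^{\mathrm{IG}}_d)}(t)=\bigl(1+(d-2)t\bigr)/(1-t)^2$.

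I do not expect a substantive obstacle; the content of the proposition is exactly that the cumulant picture is simpler than the moment picture, $1$-genericity doing the heavy lifting. The two places that need (routine) care are the bookkeeping in the containment step, which is what pins down the scalar coefficients in the first row of $K_d$ and hence the Hankel-up-to-units structure used everywhere afterwards, and the combinatorial check that the antidiagonal initial ideal of $I_2(K_d)$ is $(\kappa_2,\dots,\kappa_{d-1})^2$.
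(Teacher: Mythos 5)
Your proof is correct and follows essentially the same route as the paper for the core claims: the containment $I_2(K_d)\subseteq\mathcal{I}(\K^{\mathrm{IG}}_d)$ via the one-step recursion, primeness/codimension/Cohen--Macaulayness via Eisenbud's theorem on $1$-generic matrices (Proposition~\ref{prop:Eis 1-gen}), equality of ideals by a dimension count, and the degree via Thom--Porteous--Giambelli. The genuine difference is the Gr\"obner basis step: the paper simply cites Conca's results on determinantal ideals of (scaled) Hankel matrices, whereas you give a self-contained argument by reading off the antidiagonal initial ideal as $(\kappa_2,\dots,\kappa_{d-1})^2$ and then invoking the unmixedness/degree criterion of Lemma~\ref{lemma: Grobner basis IG}. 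Your version has the minor advantage of being independent of Conca's theorems and of producing the Hilbert series $\bigl(1+(d-2)t\bigr)/(1-t)^2$ as a byproduct; it also makes the proof uniform with the moment-variety cases (Propositions~\ref{prop:invgauss gb} and~\ref{prop:gamma_gb_hs}). One remark worth flagging: the first-row coefficients of $K_d$ as printed in the statement ($-1,1,3,\dots$) do not match the pattern forced by the final entry $(2d-3)\kappa_{d-1}$ and the recursion $\kappa_{r+1}=(2r-1)\tfrac{\mu^2}{\lambda}\kappa_r$; the $j$th column should carry $(2j-1)\kappa_j$ in the top row (i.e.\ $\kappa_1,3\kappa_2,5\kappa_3,\dots$), since otherwise the columns are not all proportional to $(1,\mu^2/\lambda)$ and the containment fails. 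You implicitly use the corrected coefficients, which is surely what the paper intends; this does not affect the $1$-genericity or the initial-ideal computation, only the containment step.
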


\begin{proof}
The recursive relation $\kappa_r=\frac{\mu^2}{\lambda}(2r-3)\kappa_{r-1}$ gives that $(\mu^2/\lambda,-1)\in\ker(K_d)$. We conclude that $J_d:=I_2(K_d)\subseteq\mathcal{I}(\K^\mathrm{IG}_d)$.
The matrix $H_d^\mathrm{IG}$ is a 1-generic Hankel matrix, so $J_d=I_2(K_d)$ is prime with the expected dimension $d-((d-1)-2+1)=2$ by Proposition~\ref{prop:Eis 1-gen}. In particular, it is Cohen--Macaulay. Since we have a parameterization $(\CC^*)\times\CC\to\K^\mathrm{IG}_d$ with algebraic identifiability, we also know that $\mathcal{I}(\K^\mathrm{IG}_d)$ is prime of dimension 2. We conclude that $\mathcal{I}(\K^\mathrm{IG}_d)=I_2(K_d)$. Similarly to $\mathcal{I}(\M^\mathrm{IG}_d)$, the degree of $\mathcal{I}(\K^\mathrm{IG}_d)$ can be computed via Thom--Porteous--Giambelli's formula. The Gr\"obner basis of determinantal ideals of symmetric matrices, and in particular Hankel matrices, was given by Conca (see \cite{Co1}, \cite{Co2}).
\end{proof}

Since the ideal obtained above is homogeneous, we note that the cumulant variety is a cone. 
In particular, it is a (scaled) rational normal curve when viewed as a projective curve in $\PP^{d-1}$.
An interesting question is whether any features of $\K^\mathrm{IG}_d$ can be pulled back by the birational moments-to-cumulants map $\M_d^\mathrm{IG}\dashrightarrow\K_d^\mathrm{IG}$. For example, we hope that this simple geometric model given by the cumulants could help prove non-defectiveness of secants of moment varieties, which corresponds in statistics to algebraic identifiability of mixtures of inverse Gaussian distributions (see \S \ref{subsec:identifiability} for further details).

\section{Gamma moment varieties}\label{sec:gamma}

In this section, we study moment varieties $\M_d^\Gamma$ for the gamma distribution. We find their defining ideals and use this to find their degrees and singular loci. We also discuss statistically important special cases of the gamma distribution whose moment varieties are rational normal curves embedded in $\M_d^\Gamma$.

\subsection{Gamma distribution}
The gamma distribution is commonly used to model physical and economic processes, especially in relation to arrival or waiting times. The density function is supported on $(0,+\infty)$, and involves the gamma function $\Gamma$ (see Figure \ref{fig:pdf}). The distribution has two parameters, and there are two commonly used parameterizations:
\begin{enumerate}
    \item The shape-scale parameterization is given by a shape parameter $k>0$ and a scale parameter $\theta>0$. The density function is given by
    \[f(x)=\frac{1}{\Gamma(k)\theta^k}x^{k-1}e^{-x/\theta}.\]
    \item The other parameterization is given by a shape parameter $\alpha>0$ and a rate parameter $\beta>0$. This parameterization is related to the previous one via $\alpha=k$ and $\beta=1/\theta$.
\end{enumerate}

The moment generating function is
$M(t)=(1-\theta t)^{-k}$, and the moments are given by
$$m_i=\theta^i\prod \limits_{j=0}^{i-1} (k+j)=\theta m_{i-1}(k+(i-1)).$$

Let $\M^\Gamma_d$ denote the $d$th moment variety of the gamma distribution. The affine part $\M^\Gamma_d\cap\{m_0\neq0\}$ has previously been studied in \cite[\S3.2]{aThesis}. Here, we study the full projective variety. We begin by finding its defining ideal. The following proof is similar to the proof of the defining ideal for the inverse Gaussian moment variety (Theorem~\ref{thm:invgauss ideal}), and so we omit some details.

\begin{theorem}\label{thm:gamma ideal}
    Let $d \geq 3$. The homogeneous prime ideal of the gamma moment variety $\M^\Gamma_d$ is  generated by the $\binom{d}{3}$ cubics given by the maximal minors of the following $(3\times d)$-matrix:
    $$H_d^\Gamma= 
\begin{pmatrix}
0  & m_1 & 2m_2 & 3m_3&\cdots& (d-1)m_{d-1}\\
m_0 & m_1 & m_2 & m_3 &\cdots& m_{d-1}\\
m_1 & m_2 & m_3 &m_4  & \cdots& m_d
\end{pmatrix}.
$$
Furthermore, $\M^\Gamma_d$ is Cohen--Macaulay.
\end{theorem}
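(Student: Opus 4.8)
The plan is to follow the proof of Theorem~\ref{thm:invgauss ideal} step by step, with the simplification that every entry of $H_d^\Gamma$ is linear, so the maximal minors are $\binom{d}{3}$ cubics and there are no higher-degree generators to track. First, the recursion $m_i=\theta(k+i-1)m_{i-1}$ together with $m_1=k\theta\,m_0$ shows that $(\theta,\,k\theta,\,-1)$ is a nonzero element of the left kernel of $H_d^\Gamma$, so the $3$-minors of $H_d^\Gamma$ vanish on $\M^\Gamma_d$, and hence $J_d:=I_3(H_d^\Gamma)\subseteq\mathcal{I}(\M^\Gamma_d)$. Since $\M^\Gamma_d$ has dimension $2$ (the parametrization by $(k,\theta)$ is generically injective, as $m_2/m_1^2=1+1/k$ recovers $k$ and then $\theta$), this gives $\dim\mathcal{V}(J_d)\geq 2$. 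For the opposite inequality I would compute the initial ideal of $J_d$ with respect to a reverse-lexicographic (antidiagonal) order: the minor on columns $\{p,p+1,q+1\}$ has $m_p^2 m_q$ as its initial term, so $\operatorname{in}(J_d)\supseteq(m_p^2 m_q:1\leq p<q\leq d-1)$, whose variety is the union over $t\in\{1,\ldots,d-1\}$ of the $2$-planes $\{m_i=0:i\in\{1,\ldots,d-1\}\setminus\{t\}\}$; hence $\dim\mathcal{V}(J_d)=\dim\mathcal{V}(\operatorname{in}(J_d))\leq 2$. Therefore $R:=S/J_d$, where $S=\CC[m_0,\ldots,m_d]$, has Krull dimension $3$, so $J_d$ has the expected codimension $(3-3+1)(d-3+1)=d-2$ of a $3\times d$ determinantal ideal, and $R$ is Cohen--Macaulay by Proposition~\ref{prop:CMdetIdeals}; in particular $R$ is unmixed with no embedded primes by Proposition~\ref{prop:CMproperties}.

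It remains to prove that $J_d$ is prime, which (as in Theorem~\ref{thm:invgauss ideal}) amounts to showing that $R$ is a domain, and for this I would take $\Delta=m_0 m_1$, the upper-left $2$-minor of $H_d^\Gamma$ up to sign, and check that (a) $\Delta$ is a non-zero-divisor on $R$ and (b) $R_\Delta$ is a domain. For (a): since $R$ is Cohen--Macaulay, Proposition~\ref{prop:CMproperties}(iv) reduces this to showing $\dim R/(m_0)=\dim R/(m_1)=2$. Setting $m_0=0$ in $H_d^\Gamma$ and expanding the minor on columns $\{j,j+1,i\}$ forces $m_j^2 m_{i-1}=0$ for all $i\geq j+2$ on $\mathcal{V}(J_d)\cap\{m_0=0\}$; tracking the smallest surviving coordinate then shows this set is a union of projective lines (on each of which only two columns of $H_d^\Gamma$ are nonzero, so the minors indeed vanish), giving $\dim R/(m_0)=2$, and the analogous computation with $m_1=0$ exhibits $\mathcal{V}(J_d)\cap\{m_1=0\}$ as a union of lines as well, giving $\dim R/(m_1)=2$. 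For (b): $\Delta$ is a non-zero-divisor and not nilpotent, so — the nilradical of the finitely generated $\CC$-algebra $R$ being its Jacobson radical — $\dim R_\Delta=\dim R=3$; expanding the minor on columns $\{1,2,i\}$ and dividing by $\Delta$ writes each $\overline{m_i}$ with $i\geq 3$ as a polynomial in $\overline{m_0},\overline{m_1},\overline{m_2},\overline{1/(m_0 m_1)}$, so $R_\Delta=\CC[\overline{m_0},\overline{m_1},\overline{m_2},\overline{1/(m_0 m_1)}]$ is the image of a surjection $\varphi\colon\CC[w,x,y,z]\to R_\Delta$ whose kernel contains $(wxz-1)$. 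Since $(wxz-1)$ is prime of height $1$ and $\CC[w,x,y,z]$ is Cohen--Macaulay, a strictly larger kernel would force $\dim R_\Delta\leq 4-2=2<3$; hence $\ker(\varphi)=(wxz-1)$, so $R_\Delta$ is a domain, and therefore so is $R$, i.e. $J_d=\mathcal{I}(\M^\Gamma_d)$.

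The main obstacle is step (a): verifying directly — without yet knowing that $\mathcal{V}(J_d)$ is irreducible — that the coordinate-hyperplane sections $\mathcal{V}(J_d)\cap\{m_0=0\}$ and $\mathcal{V}(J_d)\cap\{m_1=0\}$ genuinely drop in dimension, which requires identifying these sections explicitly as unions of lines. Everything else is a routine adaptation of the inverse Gaussian argument, and because $H_d^\Gamma$ has no quadratic entries the bookkeeping is somewhat lighter than in Theorem~\ref{thm:invgauss ideal}; the Cohen--Macaulayness drops out for free once the dimension/codimension count is in place.
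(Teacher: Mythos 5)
Your proposal is correct and follows the paper's own proof step by step: a left-kernel vector gives $J_d \subseteq \mathcal{I}(\M^\Gamma_d)$, an initial-ideal dimension count gives $\codim(J_d) = d-2$ and hence Cohen--Macaulayness via Proposition~\ref{prop:CMdetIdeals}, and primality follows from checking that $\Delta = m_0 m_1$ is a non-zero-divisor (by describing the hyperplane sections $\mathcal{V}(J_d)\cap\{m_0=0\}$ and $\mathcal{V}(J_d)\cap\{m_1=0\}$ as unions of lines) and that $R_\Delta \cong \CC[w,x,y,z]/(wxz-1)$ is a domain. One small note: your left-kernel vector $(\theta, k\theta, -1)$ is in fact the correct one, whereas the paper writes $(k\theta, \theta, -1)$, which appears to be a transposition typo and does not affect the argument.
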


\begin{proof}
Notice that the vector $(k\theta,\theta, -1)\neq 0$ is in the left kernel of the matrix $H_d^\Gamma$, so the 3-minors of $H_d^\Gamma$ vanish on $\M^{\Gamma}_d$. Thus, $$J_d:=I_2(H_d^\Gamma) \subseteq \mathcal{I}(\M^{\Gamma}_d).$$
Then, $\mathcal{V}(J_d) \supseteq \M^{\Gamma}_d$ and $\dim(\mathcal{V}(J_d)) \geq \dim(\M^{\Gamma}_d)=2$ as projective varieties.
On the other hand, if we fix an antidiagonal term order, then
 $$ \left(m_i m_j m_k \mid 0<i \leq j <k < d\right) \subseteq \init(J_d).$$
We observe that $ \sqrt{\left(m_i m_j m_k \mid 0<i \leq j <k < d\right)}= (m_i m_j \mid i \neq j, 0<i,j< d)$, so
$$\dim (\mathcal{V}(J_d)) =\dim (\mathcal{V}(\init(J_d)))\leq  \dim (\mathcal{V} \left(m_i m_j \mid i \neq j, 0<i,j< d\right))=2.$$
Thus, we have proved that $\dim (\mathcal{V}(J_d))= \dim (\M^{\Gamma}_d)=2$ as projective varieties.
This shows that the ring $R=S/J_d$ has expected Krull dimension $d+1-(d-3+1)=3$, so it is Cohen--Macaulay by Proposition \ref{prop:CMdetIdeals}.
In particular, the ideal $J_d$ has no embedded components by Proposition \ref{prop:CMproperties}.

We now show that $J_d$ is prime. Let $\Delta= m_0m_1$ be the $2$-minor in the upper-left corner (after multiplying by $-1$), and consider the localization $R_{\Delta}$. As in the inverse Gaussian case, it is sufficient to prove that $\Delta$ is a non-zero-divisor in $R$, and that $R_\Delta$ is a domain.

To prove that $\Delta$ is a non-zero-divisor in $R$, we prove that $m_0$ and $m_1$ are both non-zero-divisors in $R$. By Proposition \ref{prop:CMproperties}, since $R$ is Cohen--Macaulay, it suffices to prove that $\dim(R/(m_0))= \dim(R/(m_1))= \dim(R)-1= 2$. Observe that
$$R/(m_0)\cong \dfrac{\mathbb{C}[m_1,\ldots,m_d]}{I_3 ((H_d^\Gamma)\vert_{m_0=0})},$$
where $$(H_d^\Gamma)\vert_{m_0=0}= \begin{pmatrix}
0  & m_1 & 2m_2 & 3m_3&\cdots& (d-1)m_{d-1}\\
0 & m_1 & m_2 & m_3 &\cdots& m_{d-1}\\
m_1 & m_2 & m_3 &m_4  & \cdots& m_d
\end{pmatrix}.$$
If we consider the 3-minor on the first three columns, we get $m_1^2 m_2=0$. Thus we have two possibilities, either $m_1=0$ or $m_2=0$. Then, for $i=2,\ldots,d-2$, the $i$th antidiagonal is given by $(i+1)m_i^2m_{i+1}$, and we inductively see that either $m_i$ or $m_{i+1}$ must be equal to zero. This shows that $\mathcal{V}\left(I_3 (H_d^\Gamma) \right)\cap\{m_0=0\}$ is a union of $d-1$ curves given by
\[m_0=m_1=\cdots=\widehat{m_i}=\cdots=m_{d-1}=0,\qquad i=1,\ldots,d-1,\]
where the hat denotes omission. Thus, $R/(m_0)$ has affine dimension $2$. A similar argument shows that $R/(m_1)$ has affine dimension $1$. This shows that $\Delta$ is a non-zero-divisor.

The proof that $R_\Delta$ is a domain is similar to the inverse Gaussian case. One can use the $3$-minors of $H_d^\Gamma$ to see that $R_\Delta=\CC[\overline{m_0},\overline{m_1},\overline{m_2},\overline{1/m_0m_1}]\subseteq R$, and so $R_\Delta\cong\CC[w,x,y,z]/(wxz-1)$. Since $(wxz-1)$ is prime, this shows that $R_\Delta$ is a domain, which completes the proof.
\end{proof}

By Theorem \ref{thm:gamma ideal}, we have that $\mathcal{I}(M^\Gamma_d)$ is a Cohen--Macaulay ideal generated by the maximal minors of a matrix with linear entries. Therefore, $\mathcal{I}(M^\Gamma_d)$ has a linear minimal free resolution given by the Eagon--Northcott complex, and the degree of the gamma moment variety can be calculated via \cite[Proposition 2.15]{BV}.

\begin{corollary}\label{cor:gamma deg}
    The ideal $\mathcal{I}(\M^\Gamma_d)$ has a linear (minimal) free resolution given by the Eagon--Northcott complex. In particular, the regularity of the ideal is $\text{reg}(\mathcal{I}(\M^\Gamma_d))=3$ and the degree of the gamma moment surface is $\deg(\mathcal{I}(\M^\Gamma_d))=\binom{d}{2}$. 
\end{corollary}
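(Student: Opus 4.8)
The plan is to obtain this corollary as a formal consequence of Theorem~\ref{thm:gamma ideal} together with the standard homological theory of ideals of maximal minors, exactly paralleling the reasoning behind Corollary~\ref{conj:degree_for_inverse_gaussian} in the inverse Gaussian case. First I would record the numerical data supplied by Theorem~\ref{thm:gamma ideal}: the ideal $\mathcal{I}(\M^\Gamma_d)=I_3(H_d^\Gamma)$ is prime — hence has no embedded components — and has codimension $d-2=(3-3+1)(d-3+1)$, while $H_d^\Gamma$ is a $3\times d$ matrix all of whose entries are linear forms. Thus $I_3(H_d^\Gamma)$ is an ideal of maximal minors attaining the expected codimension.

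Next I would invoke the Eagon--Northcott theorem: when the ideal of maximal minors of a $t\times n$ matrix $H$ with $t\le n$ has the expected codimension $n-t+1$ and the entries of $H$ lie in the homogeneous maximal ideal, the associated Eagon--Northcott complex $\mathrm{EN}(H)$ is a minimal free resolution of $S/I_t(H)$ (see, e.g., \cite[Chapter~2.C]{BV}). Applying this with $H=H_d^\Gamma$ resolves $S/\mathcal{I}(\M^\Gamma_d)$; minimality is automatic since all entries of $H_d^\Gamma$ are linear, and the successive free modules in $\mathrm{EN}(H_d^\Gamma)$ are generated one degree higher than the previous, starting from the $\binom{d}{3}$ cubic generators. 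Hence the resolution of $\mathcal{I}(\M^\Gamma_d)$ is $3$-linear, and by the definition of Castelnuovo--Mumford regularity this forces $\reg(\mathcal{I}(\M^\Gamma_d))=3$.

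For the degree, I would appeal to the Thom--Porteous--Giambelli formula for the degree of a determinantal locus of expected codimension, just as in the proof of Corollary~\ref{conj:degree_for_inverse_gaussian}: the degree of $\mathcal{V}(I_3(H_d^\Gamma))$ equals the elementary symmetric polynomial $e_{d-2}$ in $d$ variables evaluated at the column degrees of $H_d^\Gamma$. Since every column of $H_d^\Gamma$ has entries of degree $1$, this evaluates to $e_{d-2}(1,\ldots,1)=\binom{d}{d-2}=\binom{d}{2}$. Alternatively, one can read the numerator of the Hilbert series directly off $\mathrm{EN}(H_d^\Gamma)$ and evaluate it at $t=1$, which again gives $\binom{d}{2}$, or simply cite \cite[Proposition~2.15]{BV}.

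I do not expect a genuine obstacle here: the substantive content — primeness, Cohen--Macaulayness, and the expected-codimension count — has already been established in Theorem~\ref{thm:gamma ideal}, and what remains is to quote the Eagon--Northcott and Thom--Porteous--Giambelli results and verify that their hypotheses hold, which they do. The only point requiring a moment's care is confirming that the Eagon--Northcott complex is genuinely minimal rather than merely exact, and this is immediate once one notes that all entries of $H_d^\Gamma$ vanish at the origin.
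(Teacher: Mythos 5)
Your proposal is correct and follows essentially the same route as the paper: both deduce the Eagon--Northcott resolution and linearity directly from the Cohen--Macaulayness and linear determinantal presentation established in Theorem~\ref{thm:gamma ideal}, and both obtain the degree from the standard degree formula for maximal-minor ideals in expected codimension. The only cosmetic difference is that you spell out the Thom--Porteous--Giambelli evaluation $e_{d-2}(1,\ldots,1)=\binom{d}{2}$ explicitly (as the paper does in the inverse Gaussian case), whereas here the paper simply cites \cite[Proposition~2.15]{BV}.
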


We now show that the $3$-minors of $H_d^\Gamma$ form a Gröbner basis for $\mathcal{I}(\M^\Gamma_d)$ with respect to the reverse lexicographic order. The outline of the proof is as in the inverse Gaussian case. Let $J$ denote the ideal generated by the initial terms of the minors. We show in the following two lemmas that $J$ is unmixed, and that $\deg(J)=\binom{d}{2}$. We then apply Lemma~\ref{lemma: Grobner basis IG} to conclude that $J$ is indeed the initial ideal of $\mathcal{I}(\M^\Gamma_d)$.

To show that $J$ is unmixed, we apply the technique of polarization to obtain a squarefree monomial ideal (see \cite{Far06} for background on polarization). We then use Stanley--Reisner theory to study the associated primes of the polarization.

\begin{lemma}\label{lem:gamma unmixed initial}
    The ideal $J$ generated by initial terms of the $3$-minors of $H_d^\Gamma$ is unmixed.
\end{lemma}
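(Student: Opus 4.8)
The plan is to recognize $J$ as (an extension of) a Stanley--Reisner ideal after polarization, and then read off unmixedness from the combinatorics of the associated simplicial complex. First I would record the minimal generators of $J$: from the antidiagonal terms of the $3\times 3$ minors of $H_d^\Gamma$, the monomial ideal $J$ is minimally generated by the squarefree cubics $m_im_jm_k$ with $1\le i<j<k\le d-1$ together with the monomials $m_i^2m_j$ with $1\le i<j\le d-1$. In particular the variables $m_0$ and $m_d$ do not occur in any generator, so $J$ is the extension of an ideal $\overline{J}\subseteq\CC[m_1,\dots,m_{d-1}]$ along the polynomial extension $\CC[m_1,\dots,m_{d-1}]\hookrightarrow\CC[m_0,\dots,m_d]$. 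Since unmixedness is preserved under polynomial ring extensions, it suffices to prove that $\overline J$ is unmixed.

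Next I would polarize $\overline J$. For each $i\in\{1,\dots,d-2\}$ introduce a new variable $y_i$ (a second copy of $m_i$); the polarization $\widetilde J$ is then minimally generated by the monomials $m_im_jm_k$ with $1\le i<j<k\le d-1$ and $m_iy_im_j$ with $1\le i<j\le d-1$. This is a squarefree monomial ideal, hence $\widetilde J=I_\Delta$ for a simplicial complex $\Delta$ on the vertex set $\{m_1,\dots,m_{d-1}\}\cup\{y_1,\dots,y_{d-2}\}$. Since polarization preserves the associated primes together with their heights (see \cite{Far06}), $\overline J$ is unmixed if and only if $\widetilde J=I_\Delta$ is unmixed, and because $I_\Delta$ is radical this is equivalent to $\Delta$ being pure. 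So the problem is reduced to a statement about facets of $\Delta$.

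Finally I would analyze the facets of $\Delta$. The minimal non-faces are exactly $\{m_i,m_j,m_k\}$ for $i<j<k$ and $\{m_i,y_i,m_j\}$ for $i<j$, so any face contains at most two of the variables $m_1,\dots,m_{d-1}$. A short case distinction on the number of $m$-variables in a face should show that the facets are precisely the sets $\{m_\ell\}\cup\{y_1,\dots,y_{d-2}\}$ for $1\le\ell\le d-1$, and the sets $\{m_a,m_b\}\cup\bigl(\{y_1,\dots,y_{d-2}\}\setminus\{y_a\}\bigr)$ for $1\le a<b\le d-1$: in both cases one checks the set is a face, that it is maximal, and that faces with fewer $y$-variables or at most one $m$-variable are not maximal. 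Each of these facets has exactly $d-1$ vertices, so $\Delta$ is pure of dimension $d-2$. Hence $I_\Delta=\widetilde J$ is unmixed, therefore $\overline J$ is unmixed, and therefore $J$ is unmixed, as claimed.

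The main obstacle I anticipate is the facet computation in the second family: one must track carefully the asymmetry that a face containing $m_a$ and $m_b$ with $a<b$ excludes $y_a$ but not $y_b$, and one must rule out ``larger'' faces obtained by dropping several $y_i$'s in exchange for a third $m$-variable — which cannot happen because any three of the $m_i$'s already form a minimal non-face. Once maximality of the two families and completeness of the list are verified, the remaining bookkeeping (counting vertices, invoking that polarization preserves unmixedness, and descending back to $J$) is routine.
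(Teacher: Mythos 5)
Your proof is correct and follows the paper's strategy essentially step for step: restrict $J$ to the subring $\CC[m_1,\ldots,m_{d-1}]$ in which all the generators live, polarize to a squarefree monomial ideal, and deduce unmixedness from purity of the associated Stanley--Reisner complex via the correspondence of associated primes under polarization. The only difference is cosmetic: you explicitly enumerate the facets of $\Delta$ and verify that each has $d-1$ vertices, whereas the paper argues by contradiction that every associated prime of the polarization must involve exactly $d-2$ of the original variables; both come down to the same analysis of the minimal non-faces $\{m_i,m_j,m_k\}$ and $\{m_i,y_i,m_j\}$.
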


\begin{proof}
    Define the ideal $J'$ of $\CC[m_1,\ldots,m_{d-1}]$ by eliminating $m_0$ and $m_d$:
    \[J'= J\cap \mathbb{C}[m_1,\ldots,m_{d-1}].\]
    If $J'$ is unmixed, then $J$ is as well. We show that every associated prime of $J'$ has height $d-2$. 
    Since $J'$ is a monomial ideal, its associated primes are also monomial. We therefore show that if $P\in\Ass(J')$, then $P$ is of the form
    \[P=(m_{i_1},\ldots,m_{i_{d-2}}\mid 1\leq i_1<\cdots<i_{d-2}\leq d-1).\]

    Let $\mathcal{P}(J')$ be the polarization of $J'$, so
    \begin{align*}
        \mathcal{P}(J')&=(m_{i,1}m_{i,2}m_{j,1},\,m_{i,1}m_{j,1}m_{k,1}\mid 1\leq i<j<k\leq d-1]\\
        &\subseteq S=\mathbb{C}[m_{i,1},m_{i,2},m_{d-1,1}\mid 1\leq i\leq d-2].
    \end{align*}
    Suppose $Q$ is an associated prime of $\mathcal{P}(J')$, and define the set $A\subseteq [d-1]$ to be
    \[A=\{i\mid m_{i,j}\in Q\}.\]
    By the correspondence between associated primes of an ideal and those of its polarization, to show that $J'$ is unmixed, we therefore want to show that $\#A=d-2$ \cite[Corollary~2.6]{Far06}.
    
    Let $\Delta$ denote the Stanley--Reisner complex associated to $J'$. Then a prime ideal $Q\subseteq S$ is an associated prime of $\mathcal{P}(J')$ if and only if the variables of $S$ that are not generators of $Q$ form a facet of $\Delta$. 

    First, suppose for contradiction that $\#A=d-1$. Let $F$ be the facet corresponding to $Q$. Then $F$ must only involve variables $m_{i,j}$ with $1\leq i\leq d-2$, and it cannot contain both $m_{i,1}$ and $m_{i,2}$ for any $i$. Since $F$ is a facet, the number of $i$ such that $m_{i,1}\in F$ cannot be greater than $2$; otherwise, this would correspond to the generator of $\mathcal{P}(J_d')$ given by $m_{i,1}m_{j,1}m_{k,1}$. We then have the following cases:
    \begin{enumerate}
        \item Suppose $m_{i,1},m_{j,1}\in F$ with $i<j$. This implies that 
        \[F=\{m_{i,1},m_{j,1},m_{k_1,2},\ldots,m_{k_{d-3},2}\mid k_\ell\in[d-2]\setminus\{i,j\}\}.\]
        Then, $F$ cannot be a facet, as $F\cup\{m_{j,2}\}$ is a face of $\Delta$.
        \item If there is at most one element of the form $m_{i,1}$ in $F$, then $F$ cannot be a facet, as it is contained in $F\cup\{m_{d-1,1}\}$, which is also a face of $\Delta$.
    \end{enumerate}

    Now suppose for contradiction that $\#A<d-2$, and let $F$ be the corresponding facet to $Q$. Then one of the following must be true:
    \begin{enumerate}
        \item $m_{i,1},m_{i,2},m_{d-1,1}\in F$ for some $1\leq i\leq d-2$, or
        \item $m_{i,1}m_{i,2},m_{j,1},m_{j,2}\in F$ for some $1\leq i<j\leq d-2$.
    \end{enumerate}
    In both cases, we obtain a contradiction that $F$ is a face of $\Delta$, since $\mathcal{P}(J')$ has generators of the form $m_{i,1}m_{i,2}m_{j,1}$ for all $1\leq i<j\leq d-1$.

    We therefore see that any associated prime of $J'$ has height $d-2$, and so $J$ is unmixed.
\end{proof}

We now prove that $J$ is indeed the initial ideal by computing the degree of $J$; the argument is similar to the proof of Proposition~\ref{prop:invgauss gb}. This result also addresses \cite [Conjecture 3.2.5]{aThesis}.

\begin{proposition}\label{prop:gamma_gb_hs}
    The $3$-minors of $H_d^\Gamma$ form a Gröbner basis for the homogeneous prime ideal of $\M_d^\Gamma$ with respect to any antidiagonal term order.     Moreover, the Hilbert series of $S/\mathcal{I}(\mathcal{M}^\Gamma_d)$ is given by
    \[\frac{1+(d-2)t+\binom{d-1}{2}t^2}{(1-t)^3}.\]
\end{proposition}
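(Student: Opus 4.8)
The plan is to follow the template of Proposition~\ref{prop:invgauss gb}. Write $S=\CC[m_0,\dots,m_d]$, $\mathcal{I}=\mathcal{I}(\M_d^\Gamma)$, and let $J$ be the ideal generated by the initial terms of the $3$-minors of $H_d^\Gamma$ with respect to a fixed antidiagonal term order. Since the minors generate $\mathcal{I}$ (Theorem~\ref{thm:gamma ideal}), we automatically have $J\subseteq\init(\mathcal{I})$, so by Lemma~\ref{lemma: Grobner basis IG} it suffices to establish three things: $\dim(S/J)=\dim(S/\mathcal{I})$, that $J$ is unmixed, and that $\deg(J)=\deg(\mathcal{I})$. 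The middle condition is exactly Lemma~\ref{lem:gamma unmixed initial}, and $\deg(\mathcal{I})=\binom{d}{2}$ is already recorded in Corollary~\ref{cor:gamma deg}, so the real content is to pin down $J$ explicitly and compute its Hilbert series.

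First I would identify $J$. Expanding the $3$-minor on columns $a<b<c$ and comparing its (at most three) monomials under the reverse lexicographic order, the leading term turns out to be a scalar multiple of the antidiagonal product: it is $m_1m_{b-1}m_{c-1}$ when $a=1$ (the column carrying the $0$ entry requires a small separate check), and $m_am_{b-1}m_{c-1}$ when $a\geq 2$. Running over all column triples, these leading terms are precisely the generators of
\[
J=\bigl(m_im_jm_k \,\bigm|\, 1\le i\le j<k\le d-1\bigr)\subseteq S,
\]
a monomial ideal in the variables $m_1,\dots,m_{d-1}$ whose minimal generators are the $\binom{d-1}{3}$ squarefree cubics together with the $\binom{d-1}{2}$ monomials $m_i^2m_j$ with $i<j$ — a total of $\binom{d}{3}$, matching the number of minors. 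This is exactly the ideal whose radical was computed in the proof of Theorem~\ref{thm:gamma ideal}, namely $\sqrt{J}=(m_im_j\mid i\neq j,\ 1\le i,j\le d-1)$, so $\mathcal{V}(J)$ is the union of the $d-1$ coordinate $2$-planes $\{m_\ell=0:\ell\in\{1,\dots,d-1\}\setminus\{k\}\}$ in $\PP^d$; hence $\dim(S/J)=3=\dim(S/\mathcal{I})$.

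For the degree I would compute the Hilbert series directly. Since $J$ is extended from $J':=J\cap\CC[m_1,\dots,m_{d-1}]$, we have $\HS_{S/J}(t)=(1-t)^{-2}\,\HS_{\CC[m_1,\dots,m_{d-1}]/J'}(t)$. A monomial lies outside $J'$ precisely when its support has size at most $2$ and it is not divisible by any $m_i^2m_j$ with $i<j$, which forces it to be either a pure power $m_i^\ell$ or of the form $m_i\,m_j^{\ell-1}$ with $i<j$. Counting these by degree gives Hilbert-function values $1$, $d-1$, and then the stable value $\binom{d}{2}=(d-1)+\binom{d-1}{2}$ in every degree $\geq 2$; summing the resulting geometric series and clearing denominators yields
\[
\HS_{S/J}(t)=\frac{1+(d-2)t+\binom{d-1}{2}t^2}{(1-t)^3},
\]
whose numerator at $t=1$ equals $\binom{d}{2}=\deg(\mathcal{I})$. (Alternatively, and closer to Proposition~\ref{prop:invgauss gb}, one can obtain the same series by iterating Lemma~\ref{lemma:SES}: start from the Stanley--Reisner ring of the $1$-skeleton of the $(d-2)$-simplex — the squarefree part of $J$ — and peel off the $\binom{d-1}{2}$ remaining generators $m_i^2m_j$ one at a time.) With all three hypotheses of Lemma~\ref{lemma: Grobner basis IG} verified, $J=\init(\mathcal{I})$: the $3$-minors form a Gröbner basis, and since the Hilbert series is invariant under passing to the initial ideal, the displayed formula is also $\HS_{S/\mathcal{I}}(t)$.

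The step I expect to be most delicate is the identification of $J$: one has to confirm that the antidiagonal product is genuinely the leading monomial of each minor — with the column containing the $0$ entry behaving a little differently from the others — and that the resulting list of leading terms is exactly the stated generating set, with nothing missing and nothing redundant, since both the dimension and the degree computations are then carried out on $J$. Once $J$ is correctly described, the rest is routine monomial combinatorics, the unmixedness already being handled by Lemma~\ref{lem:gamma unmixed initial}.
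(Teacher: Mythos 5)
Your proposal is correct and shares the overall scaffolding of the paper's proof: both reduce the claim to verifying the three hypotheses of Lemma~\ref{lemma: Grobner basis IG}, both invoke Lemma~\ref{lem:gamma unmixed initial} for unmixedness and Corollary~\ref{cor:gamma deg} for $\deg(\mathcal{I})=\binom{d}{2}$, and both identify $J$ as the monomial ideal $(m_im_jm_k\mid 1\le i\le j<k\le d-1)$, with minimal generators the $\binom{d-1}{3}$ squarefree cubics plus the $\binom{d-1}{2}$ monomials $m_i^2m_j$. The one point where you diverge is the computation of $\HS_{S/J}(t)$: the paper iterates Lemma~\ref{lemma:SES}, peeling off the $\binom{d-1}{2}$ generators $m_i^2m_j$ one at a time (each colon ideal being generated by $d-3$ variables, contributing $t^3/(1-t)^4$) and then identifying what remains with the Stanley--Reisner ring of the $1$-skeleton of a simplex, whereas you compute $\HS_{\CC[m_1,\dots,m_{d-1}]/J'}(t)$ by directly enumerating standard monomials (pure powers $m_i^\ell$ and products $m_im_j^{\ell-1}$ with $i<j$), giving the eventually constant Hilbert function value $\binom{d}{2}$. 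Both computations yield the same rational function; your version is perhaps more self-contained since it does not rely on knowing the $h$-vector of the skeleton, while the paper's keeps the calculation parallel to the inverse Gaussian case (Proposition~\ref{prop:invgauss gb}). You also spell out the identification of the leading terms (the antidiagonal product $m_am_{b-1}m_{c-1}$, including the special column $a=1$ with its zero entry), which the paper leaves implicit — a worthwhile check, and your analysis of it is correct. Finally, note that the dimension condition $\dim(S/J)=\dim(S/\mathcal{I})$, which you verify explicitly via $\sqrt{J}$, is not restated in the paper's proof of this proposition; it is implicitly supplied by the dimension bound already established in the proof of Theorem~\ref{thm:gamma ideal}, where $(m_im_jm_k\mid 0<i\le j<k<d)\subseteq\init(J_d)$ was used to conclude $\dim(\mathcal{V}(\init(J_d)))\le 2$. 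Making this explicit, as you do, is cleaner.
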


\begin{proof}
    Let $J$ be the ideal generated by initial terms of $3$-minors of $H_d^\Gamma$. We have that $J\subseteq\init(\mathcal{I}(\mathcal{M}^\Gamma_d))$, and Lemma~\ref{lem:gamma unmixed initial} shows that $J$ is unmixed. Therefore, to show equality of the ideals, it suffices by Lemma~\ref{lemma: Grobner basis IG} to show that the degrees of the two ideals are equal.
    
    We iteratively apply Lemma~\ref{lemma:SES}. Each time, we remove a minimal generator of $J$ of the form $m_i^2m_j$, with $1\leq i<j\leq d-1$; there are $\binom{d-1}{2}$ such generators. Then the colon ideal is always of the form $(m_{i_1},\ldots,m_{i_{d-3}})$, and the final $J'$ ideal that we end up with is
    \[J'=(m_im_jm_k\mid 1\leq i<j<k\leq d-1).\]
    Thus, the Hilbert series of $J$ is given by
    \begin{align*}
        \text{HS}_{S/J}(t)&=\text{HS}_{S/J'}(t)-\binom{d-1}{2}t^3\text{HS}_{S/(m_1,\ldots,m_{d-3})}(t)\\
        &=\frac{1+(d-3)t+\binom{d-2}{2}t^2}{(1-t)^4}-\frac{\binom{d-1}{2}t^3}{(1-t)^4}\\
        &=\frac{1+(d-2)t+\binom{d-1}{2}t^2}{(1-t)^3}.
    \end{align*}
    Substituting $t=1$ in the numerator, we see that
    \[\deg(J)=1+(d-2)+\binom{d-1}{2}=\binom{d}{2}.\]
    By Corollary~\ref{cor:gamma deg}, this is also the degree of $\mathcal{I}(\mathcal{M}^\Gamma_d)$, and so this completes the proof.
\end{proof}

\begin{example}
\label{ex:generators_gamma}
For $d=3$, we have the following principal homogeneous prime ideal:
\begin{align*}
    \I(\M_3^\Gamma)=(-m_0 m_1 m_3 + 2 m_0 m_2^2 - m_1^2 m_2).
\end{align*}
Its Hilbert series is $(1+t+t^{2})/(1-t)^{3}$, and its degree is $3$.

For $d=4$, we have the following  homogeneous prime ideal:
\begin{align*}
\I(\M_4^\Gamma)=\begin{pmatrix}
    -m_0 m_1 m_3 + 2 m_0 m_2^2 - m_1^2 m_2,\\ -m_0 m_1 m_4 + 3 m_0 m_2 m_3 - 2 m_1^2 m_3,\\ -2 m_0 m_2 m_4 + 3 m_0 m_3^2 - m_1 m_2 m_3,\\ -m_1 m_2 m_4 + 2 m_1 m_3^2 - m_2^2 m_3
\end{pmatrix}.
\end{align*}
Its Hilbert series is $(1+2t+3t^{2})/(1-t)^3$, and its degree is $6$.
\end{example}

We now describe the singular locus of $ \M^\Gamma_d$.

\begin{proposition}
\label{prop:singular_locus_gamma}
The singular locus of $\M^\Gamma_d$ is given by two points in $\PP^d$, defined by the ideals $(m_0,m_1,\ldots,m_{d-1})$ and $(m_1,m_2,\ldots,m_d)$.
\end{proposition}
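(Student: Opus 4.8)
\textbf{Proof proposal for Proposition~\ref{prop:singular_locus_gamma}.}

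The plan is to follow the strategy of Proposition~\ref{prop:singular_locus_inverse_gaussian}. First I would record that the open patch $\M^\Gamma_d\cap\{m_0m_1\ne0\}$ lies in the smooth locus: the $3$-minors of $H_d^\Gamma$ on the column triples $\{1,2,j\}$ express $m_3,\dots,m_d$ as rational functions of $m_0,m_1,m_2$ whose denominators are monomials in $m_0,m_1$, so this patch is isomorphic to $(\CC^*)^2\times\CC$. Hence $\mathrm{Sing}(\M^\Gamma_d)\subseteq\M^\Gamma_d\cap\{m_0m_1=0\}$. By the computation in the proof of Theorem~\ref{thm:gamma ideal}, $\M^\Gamma_d\cap\{m_0=0\}$ is the union of the $d-1$ lines $\mathcal L_i\colon m_0=m_1=\cdots=\widehat{m_i}=\cdots=m_{d-1}=0$, $i=1,\dots,d-1$, and a similar argument shows that $\M^\Gamma_d\cap\{m_1=0\}$ is the union of $\mathcal L_2,\dots,\mathcal L_{d-1}$ together with the line $\mathcal N\colon m_1=m_2=\cdots=m_{d-1}=0$. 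Writing $P_1=\mathcal V(m_0,\dots,m_{d-1})$ and $P_2=\mathcal V(m_1,\dots,m_d)$ for the two claimed points, one notes that $P_1$ lies on every $\mathcal L_i$ and on $\mathcal N$, while $P_2$ lies only on $\mathcal N$ (every $\mathcal L_i$ is contained in $\{m_0=0\}$). Since $\mathrm{Sing}(\M^\Gamma_d)\subseteq\mathcal L_1\cup\cdots\cup\mathcal L_{d-1}\cup\mathcal N$, it suffices to prove that $\mathcal L_i\cap\mathrm{Sing}(\M^\Gamma_d)=\{P_1\}$ for every $i$ and that $\mathcal N\cap\mathrm{Sing}(\M^\Gamma_d)=\{P_1,P_2\}$.

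Let $\mathcal J$ be the $\binom d3\times(d+1)$ Jacobian of the maximal minors of $H_d^\Gamma$; by the Jacobian criterion, $\mathrm{Sing}(\M^\Gamma_d)$ is the locus on $\M^\Gamma_d$ where $\operatorname{rank}\mathcal J<\codim\I(\M^\Gamma_d)=d-2$. The inclusion $\{P_1,P_2\}\subseteq\mathrm{Sing}(\M^\Gamma_d)$ is then immediate: every monomial appearing in a maximal minor of $H_d^\Gamma$ has the form $\pm(x-1)\,m_{x-1}m_{y-1}m_z$ with $\{x,y,z\}$ a permutation of the three chosen column indices, all lying in $\{1,\dots,d\}$; hence both $m_{x-1}$ and $m_{y-1}$ lie in $(m_0,\dots,m_{d-1})$, and since $x\ne y$ forces at most one of $x-1,y-1$ to equal $0$, at least two of the three factors lie in $(m_1,\dots,m_d)$. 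Thus every maximal minor lies in $(m_0,\dots,m_{d-1})^2\cap(m_1,\dots,m_d)^2$, so all first partials of all minors vanish at $P_1$ and at $P_2$; as $d\ge3$ gives $d-2\ge1$, both points are singular.

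For the reverse inclusions I would evaluate $\mathcal J$ along each line and pick out $d-2$ rows that span a $(d-2)$-dimensional space at every point other than $P_1,P_2$. On $\mathcal L_i$ with $1\le i\le d-2$, at a point with $m_i\ne0$, I take the $d-2$ minors supported on the column triples $\{i,i+1,\ell+1\}$ for $\ell\in\{0,\dots,d-1\}\setminus\{i-1,i\}$; a short check of which monomials survive after setting every coordinate other than $m_i$ and $m_d$ to zero shows that for $\ell\le d-2$ the corresponding row of $\mathcal J$ has its only nonzero entry in the $m_\ell$-column, equal to $(i-\ell)m_i^2$ up to sign, while for $\ell=d-1$ its $m_{d-1}$-entry equals $(i-d+1)m_i^2$ up to sign. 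Restricting $\mathcal J$ to these $d-2$ rows and to the $d-2$ columns $\{m_\ell\colon\ell\in\{0,\dots,d-2\}\setminus\{i-1,i\}\}\cup\{m_{d-1}\}$ gives a diagonal matrix whose diagonal entries are nonzero since $m_i\ne0$; hence $\operatorname{rank}\mathcal J=d-2$ there. The line $\mathcal L_{d-1}$ is treated identically with the triples $\{d-1,d,\ell+1\}$, $\ell\in\{0,\dots,d-3\}$ (yielding a diagonal block with entries $(d-\ell-1)m_{d-1}^2$), and $\mathcal N$ with the triples $\{1,\ell+1,d\}$, $\ell\in\{1,\dots,d-2\}$ (yielding a diagonal block with entries $\ell\,m_0m_d$, which vanish precisely at $P_1$ and $P_2$). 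The main obstacle is the combinatorial bookkeeping in this last step: one has to choose the right maximal minors for each line and verify both that every coordinate apart from the one or two nonzero ones kills each of the chosen minors together with all of their selected partials, and that the surviving scalars $i-\ell$, $d-\ell-1$, $\ell$ are nonzero throughout the relevant ranges; granting this, the conclusion follows exactly as in Proposition~\ref{prop:singular_locus_inverse_gaussian}.
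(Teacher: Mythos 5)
Your proposal is correct and follows the same strategy as the paper: identify the smooth open patch $\{m_0m_1\neq0\}$, observe that its complement in $\M^\Gamma_d$ is a union of lines, and apply the Jacobian criterion along each line. One minor point: your enumeration $\mathcal L_1\cup\cdots\cup\mathcal L_{d-1}\cup\mathcal N$ of the boundary is in fact more accurate than the list of $2(d-1)$ curves the paper writes down (for $2\le i\le d-1$, the locus $m_1=\cdots=\widehat{m_i}=\cdots=m_d=0$ does not lie on $\M^\Gamma_d$, since the $3$-minor on columns $1,i,i+1$ reduces there to $i\,m_0m_i^2$), and your one-line observation that every monomial of a maximal minor of $H_d^\Gamma$ lies in $(m_0,\dots,m_{d-1})^2\cap(m_1,\dots,m_d)^2$ is a particularly clean way to see $\{P_1,P_2\}\subseteq\Sing(\M^\Gamma_d)$.
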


\begin{proof}
Analogously to the proof of Proposition~\ref{prop:singular_locus_inverse_gaussian}, one can construct an isomorphism $(\CC^*)^2\times\CC\cong\M^{\Gamma}_d\cap\{m_0m_1\neq 0\}$ by considering the maximal minors involving the first two columns of $H_d^\Gamma$. The complement of this affine open patch in $\M_d^\Gamma$ is the union of $2(d-1)$ projective curves:
\begin{itemize}
    \item $m_0=m_1=\cdots \widehat{m_{i}} =\cdots = m_{d-2}=m_{d-1}=0$ for $i\in\{1,\ldots,d-1\}$, and
    \item $m_1=m_2=\cdots \widehat{m_{i}} =\cdots = m_{d-1}=m_{d}=0$ for $i\in\{2,\ldots,d\}$.
\end{itemize}
Let $\mathcal{J}$ denote the Jacobian of the maximal minors of $H_d^\Gamma$. If we evaluate $\mathcal{J}$ at one of the curves of the first kind described above, then the only minors that make a contribution to $\mathcal{J}$ are those that contain a term that is at most linear in $m_0,\ldots,\widehat{m_i},\ldots,m_{d-1}$. This happens precisely for column indices $\{i,i+1,j\}$ for $j\not\in\{i,i+1\}$, which gives a minor with a term $im_i^2m_{j+1}$, and $\{i+1,j,d\}$ for $j\not\in\{i+1,d\}$, which gives a minor with a term divisible by $m_im_jm_d$. Hence, the resulting evaluated matrix $\mathcal{J}$ vanishes completely when $m_i=0$, and has rank at least $d-2$ if $m_i\neq 0$. The situation is analogous when $\mathcal{J}$ is evaluated on a curve of the second kind.
\end{proof}

We conclude this section by pointing out that, similar to the cumulant variety of the inverse Gaussian distribution, the cumulant variety of the gamma distribution is a cone. This result first appeared in \cite [Proposition 3.2.1]{aThesis}. We include it here for the sake of completeness.

\begin{proposition}
\label{prop:cumulant_variety_for_gamma}
The ideal $\mathcal{I}(\K^\Gamma_d)$ is generated by the $\binom{d-1}{2}$ quadrics given by the 2-minors of the matrix
\[K_d=\begin{pmatrix}
    \kappa_1 & \kappa_2 & \dfrac{\kappa_3}{2} & \cdots & \dfrac{\kappa_{d-1}}{(d-2)!}\\
    \kappa_2 & \dfrac{\kappa_3}{2} & \dfrac{\kappa_4}{3!} & \cdots & \dfrac{\kappa_d}{(d-1)!}
\end{pmatrix}.\]
Furthermore, $\mathcal{I}(\K^\Gamma_d)$ is Cohen--Macaulay, its degree is $d-1$, and its Gröbner basis is given by the $2$-minors that generate the ideal.
\end{proposition}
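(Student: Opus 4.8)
The plan is to mirror the proof of Proposition~\ref{prop:cumulant_variety_for_inverse_guassian}, exploiting the fact that the gamma cumulants, after a diagonal rescaling of coordinates, form a geometric progression, so that $K_d$ becomes an honest Hankel matrix. First I would compute the cumulants: from $M(t)=(1-\theta t)^{-k}$ we get $K(t)=\log M(t)=-k\log(1-\theta t)=\sum_{r\geq 1}\frac{k\theta^r}{r}t^r$, hence $\kappa_r=(r-1)!\,k\theta^r$. Thus the rescaled quantities $\tilde\kappa_r:=\kappa_r/(r-1)!$ satisfy $\tilde\kappa_r=k\theta^r$, i.e.\ $\tilde\kappa_r=\theta\,\tilde\kappa_{r-1}$. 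Under the invertible diagonal change of coordinates $\kappa_r\mapsto\tilde\kappa_r$ on $\CC^d$, the matrix $K_d$ turns into the standard $2\times(d-1)$ Hankel matrix with entries $\tilde\kappa_1,\ldots,\tilde\kappa_d$, and the recursion shows that $(\theta,-1)$ lies in the left kernel of $K_d$ at every point of the parameterization. Therefore all $2$-minors of $K_d$ vanish on $\K^\Gamma_d$, giving $J_d:=I_2(K_d)\subseteq\mathcal{I}(\K^\Gamma_d)$.

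Next I would invoke Eisenbud's theorem (Proposition~\ref{prop:Eis 1-gen}): the Hankel matrix in distinct variables is $1$-generic, so $J_d=I_2(K_d)$ is prime of codimension $(d-1)-2+1=d-2$, hence of dimension $2$, and Cohen--Macaulay. On the other hand, the parameterization $\CC^2\to\CC^d$, $(k,\theta)\mapsto(\kappa_1,\ldots,\kappa_d)$, is generically finite onto its image --- indeed $\theta=\kappa_2/\kappa_1$ and $k=\kappa_1^2/\kappa_2$ recover the parameters --- so $\mathcal{I}(\K^\Gamma_d)$ is a prime ideal of dimension $2$ as well. A prime ideal contained in a prime ideal of the same dimension must equal it, so $\mathcal{I}(\K^\Gamma_d)=I_2(K_d)$, which is Cohen--Macaulay.

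Finally, for the numerical invariants: since $\mathcal{I}(\K^\Gamma_d)$ is the ideal of maximal minors of a $2\times(d-1)$ matrix of linear forms attaining the expected codimension, the Thom--Porteous--Giambelli formula gives $\deg(\mathcal{I}(\K^\Gamma_d))=d-1$; equivalently, the projectivization is the rational normal curve of degree $d-1$ in $\PP^{d-1}$. The Gröbner basis statement follows from Conca's work on Gröbner bases of ideals of minors of symmetric, and in particular Hankel, matrices (\cite{Co1}, \cite{Co2}): with respect to any antidiagonal term order the maximal minors of $K_d$ form a Gröbner basis. There is essentially no serious obstacle here --- the argument runs in parallel with the inverse Gaussian case, which is why the result is included mainly for completeness --- and the only point that needs care is identifying the correct rescaling $\kappa_r\mapsto\kappa_r/(r-1)!$ that converts $K_d$ into a genuine Hankel matrix, so that Proposition~\ref{prop:Eis 1-gen} and Conca's theorem apply verbatim.
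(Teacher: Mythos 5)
Your proof is correct, and it is exactly the argument the paper intends: the paper does not actually spell out a proof of this proposition (it is stated with a citation to the thesis ``for the sake of completeness''), but the reasoning you give parallels the paper's proof of the analogous inverse Gaussian result (Proposition~\ref{prop:cumulant_variety_for_inverse_guassian}) step for step. The one small clarification worth noting is that $K_d$ is a Hankel matrix only after the diagonal rescaling $\kappa_r\mapsto\kappa_r/(r-1)!$, as you correctly identify; since this is an invertible linear change of coordinates, the $1$-genericity, Eisenbud primeness, Thom--Porteous degree count, and Conca's Gr\"obner basis result all transfer verbatim, so your argument is complete.
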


Similar to the case of the inverse Gaussian distribution, having this simpler geometric model given by the cumulants could help in understanding the geometry of secants of moment varieties of the gamma distribution. 

\subsection{Exponential and chi-squared distributions}\label{subsec:1dim}
We now consider two important special cases of the gamma distribution: the exponential and chi-squared distributions. Both distributions are given by one parameter, and have one-dimensional moment varieties. 
We show that the moment varieties are in fact rational normal curves. 

The \emph{exponential distribution} is the distribution of the time between events in a process, where events occur continuously and independently at a constant average rate $\lambda>0$. The only parameter is $\lambda$, and the moment generating function is
\[M(t)=(1-\lambda t)^{-1}.\]
The moments are given by 
\[m_i=i! \lambda^i= i \lambda m_{i-1}.\] 
This is a specialization of the gamma distribution with shape $k=1$ and rate $\theta=\lambda$ under the shape-scale parameterization.

Let $\M^{\exp}_d$ denote the $d\text{th}$ moment variety of the exponential distribution. The following result shows that the variety is a rational normal curve.

\begin{proposition}\label{prop:generators_for_exponetial}
The homogeneous ideal $\I(\M^{\exp}_d)$ is  generated by the maximal minors of
$$H_d^{\exp}= 
\begin{pmatrix}
m_0 & 2m_1 & 3m_2 & 4m_3&\cdots& dm_{d-1}\\
m_1 & m_2 & m_3 & m_4 &\cdots& m_{d}
\end{pmatrix}.$$
In particular, $\M^{\exp}_d$ is a rational normal curve.
\end{proposition}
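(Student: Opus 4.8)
The plan is to follow the same template used for the inverse Gaussian and gamma moment varieties, but the situation is simpler because the matrix $H_d^{\exp}$ is a $2\times d$ matrix with linear entries that is, up to rescaling columns by nonzero scalars, a Hankel matrix. First I would check that the $2$-minors of $H_d^{\exp}$ actually vanish on $\M^{\exp}_d$: the recursion $m_i = i\lambda m_{i-1}$ says precisely that the vector $(\lambda, -1)$ lies in the left kernel of $H_d^{\exp}$ (column $j$, for $j\geq 1$, reads $(jm_{j-1}, m_j)^\top$ and $\lambda\cdot jm_{j-1} - m_j = \lambda\cdot jm_{j-1} - j\lambda m_{j-1} = 0$; the first column reads $(m_0,m_1)^\top$ and $\lambda m_0 - m_1 = \lambda - \lambda = 0$). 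Hence $I_2(H_d^{\exp})\subseteq \mathcal{I}(\M^{\exp}_d)$.

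Next I would invoke Proposition~\ref{prop:Eis 1-gen}. The matrix $H_d^{\exp}$ is obtained from the Hankel matrix $\begin{pmatrix} m_0 & m_1 & \cdots & m_{d-1}\\ m_1 & m_2 & \cdots & m_d\end{pmatrix}$ by multiplying the $j$th column ($j=1,\dots,d$) by the nonzero scalar $j$ in the top row only; scaling individual entries by units does not create generalized zero entries, so $H_d^{\exp}$ is $1$-generic. Therefore $I_2(H_d^{\exp})$ is prime of codimension $d - 2 + 1 = d-1$ in $S = \CC[m_0,\dots,m_d]$, hence defines an irreducible projective curve in $\PP^d$. On the other hand, the parameterization $\lambda\mapsto [m_0:\cdots:m_d]$ is generically finite-to-one (indeed one-to-one, since $\lambda$ is recovered as $m_1/m_0$), so $\M^{\exp}_d$ is also an irreducible curve, i.e.\ has the same dimension $1$. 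Since $I_2(H_d^{\exp})$ is a prime ideal of the right dimension contained in the prime ideal $\mathcal{I}(\M^{\exp}_d)$, the two must coincide: $\mathcal{I}(\M^{\exp}_d) = I_2(H_d^{\exp})$.

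Finally, to conclude that $\M^{\exp}_d$ is a rational normal curve, I would identify it with the standard rational normal curve of degree $d$ in $\PP^d$. Either argue directly: the map $\lambda\mapsto [1 : \lambda : 2\lambda^2 : \cdots]$ has image projectively equivalent to $[1:s:s^2:\cdots:s^d]$ after rescaling coordinates (the $i$th coordinate $m_i = i!\,\lambda^i$ differs from $\lambda^i$ by the nonzero constant $i!$, and a diagonal change of coordinates on $\PP^d$ is a projective isomorphism); alternatively, cite that the ideal of $2\times 2$ minors of a $1$-generic $2\times d$ Hankel-type matrix is exactly the ideal of a rational normal curve. I expect no serious obstacle here — the only point requiring a little care is making explicit that the column rescalings preserve $1$-genericity (so Proposition~\ref{prop:Eis 1-gen} applies) and that the diagonal coordinate change identifies the image with the standard rational normal curve; both are routine.
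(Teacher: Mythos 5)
Your proposal takes essentially the same route as the paper: observe that $(\lambda,-1)$ lies in the left kernel of $H_d^{\exp}$ so that $I_2(H_d^{\exp})\subseteq\mathcal{I}(\M^{\exp}_d)$, invoke Proposition~\ref{prop:Eis 1-gen} to get that $I_2(H_d^{\exp})$ is prime of the expected codimension $d-1$, and then compare dimensions to force equality of the two prime ideals. Your added diagonal-change-of-coordinates argument for the ``rational normal curve'' conclusion is a nice explicit touch the paper leaves implicit.

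One small caution on the $1$-genericity step: the statement ``scaling individual entries by units does not create generalized zero entries'' is not a valid general principle (scaling a single entry of a $1$-generic matrix by a nonzero scalar can destroy $1$-genericity), so as written this is an assertion rather than a proof. It happens to hold here, and one can see it directly: for $0\neq u=(u_1,u_2)$ and $0\neq v$, the generalized entry $u^\top H_d^{\exp} v$ is a linear form whose coefficient of $m_0$ is $u_1 v_1$ and whose coefficient of $m_d$ is $u_2 v_d$, and whose coefficient of $m_i$ for $1\le i\le d-1$ is $(i+1)u_1 v_{i+1}+u_2 v_i$; setting all coefficients to zero forces $v=0$ whether $u_1=0$, $u_2=0$, or neither. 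That said, the paper itself is similarly terse here, simply asserting that $H_d^{\exp}$ is a ``$1$-generic Hankel matrix'' and relying on the remark in the preliminaries that Hankel matrices remain $1$-generic up to unit coefficients, so your level of rigor matches the source.
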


\begin{proof}
Notice that the vector $(\lambda,-1)\neq 0$ is in $\ker (H_d^{\exp})$, so the 2-minors of $H_d^{\exp}$ vanish on $\M^{\exp}_d$. Thus, 
$$J_d:=I_2(H_d^{\exp}) \subseteq \mathcal{I}(\M^{\exp}_d).$$
Moreover, $H_d^{\exp}$ is a 1-generic Hankel matrix, so $J_d=I_2(H_d^{\exp})$ is prime of expected codimension $d-1$ by Proposition~\ref{prop:Eis 1-gen}. In particular, it defines a Cohen--Macaulay ring of dimension $2$. As projective varieties,
$$\dim (\mathcal{V}(J_d))= 1= \dim (\M^{\exp}_d).$$
Hence, $\mathcal{V}(J_d)=\M^{\exp}_d$. Since $J_d$ is prime, we get $J_d= \mathcal{I}(\M^{\exp}_d)$.
\end{proof}

We now consider the \emph{chi-squared distribution}, which is the distribution of a sum of the squares of $k\geq 1$ independent standard Gaussian variables. The only parameter is $k$, and the moment generating function is
\[M(t)=(1-2t)^{-k/2}.\]
The moments are given by 
\[m_i=k(k+2)\cdots (k+2i-2)=m_{i-1} (k+2i-2)=m_{i-1} k + m_{i-1} (2i-2).\]
This is a specialization of the gamma distribution with shape $k/2$ and scale $2$.

Let $\mathcal{M}^{\chi}_{d}$ denote the moment variety. Following the proof of Proposition~\ref{prop:generators_for_exponetial}, we attempt to find generators of the ideal $\I(\mathcal{M}^{\chi}_{d})$ by using the recursive formula given above, which gives that the maximal minors of
$$H_d^{\chi}= 
\begin{pmatrix}
0  & 2m_1 & 4m_2 & 6m_3&\cdots& (2d-2)m_{d-1}\\
m_0 & m_1 & m_2 & m_3 &\cdots& m_{d-1}\\
m_1 & m_2 & m_3 &m_4  & \cdots& m_d
\end{pmatrix}
$$
vanish on $\mathcal{M}^{\chi}_{d}$.
This matrix is similar to the one for the gamma distribution, and the dimension of the projective variety $\mathcal{V}(I_3(H_d^{\chi}))$ is $(d+1)-(d-3+1)-1=2$. However, the dimension of the projective variety $\mathcal{M}^{\chi}_{d}$ is $1$. Thus, the $3$-minors of $H_d^{\chi}$ are not sufficient to generate the ideal of $\mathcal{M}^{\chi}_{d}$.

Instead, we follow the technique used in \cite[\S3.3]{aThesis}. Here, the author uses a change in coordinates from moments to powers of the parameter to obtain the defining ideal of the Poisson moment variety. Let $s_{d,i}$ denote the (signed) Stirling numbers of the first kind, and $S_{d,i}$ denote the Stirling numbers of the second kind. Recall that the Stirling numbers of the first kind satisfy 
\[x(x-1)(x-2)\cdots(x-n+1)=\sum_{k=0}^n s_{n,k}x^k,\]
and that they are related to the Stirling numbers of the second kind by the following property. Let $(f_n)_{n=0}^\infty$ and $(g_n)_{n=0}^\infty$ be two sequences of complex numbers; then
\[g_d=\sum_{i=0}^d S_{d,i} f_i=\sum_{i=1}^d S_{d,i} f_i \quad \Leftrightarrow \quad f_d=\sum_{i=0}^d s_{d,i} g_i=\sum_{i=1}^d s_{d,i}g_i.\]
Note that $s_{n,0}=S_{n,0}=0$ for all $n\geq 1$.
Using the formula for the $d$th moment, we see that
\[m_d=\sum_{i=0}^d (-2)^{d-i}s_{d,i}k^i,\]
and so applying the relationship between Stirling numbers of the first and second kind with $f_d=m_d/(-2)^d$ and $g_d=(-k/2)^d$, we have that
\[k^d=\sum_{i=0}^d (-2)^{d-i}S_{d,i}m_i.\]

\begin{proposition}\label{prop:ideal chi-square}
    Let $d\geq 2$. The homogeneous prime ideal of the chi-squared moment variety $\mathcal{M}^{\chi}_{d}$ is  generated by the $2\times 2$ minors of the $(2\times d)$-matrix
    \[H_d^{\chi}=\begin{pmatrix}
m_0 & m_1 & m_2-2m_1 & \cdots & \sum_{i=0}^{d-1} (-2)^{d-i-1}S_{d-1,i}m_i\\
m_1 & m_2-2m_1 & m_3 - 6m_2 + 4m_1 & \cdots & \sum_{i=0}^d (-2)^{d-i}S_{d,i}m_i
    \end{pmatrix}.\]
    In particular, $\mathcal{M}^{\chi}_{d}$ is a rational normal curve.
\end{proposition}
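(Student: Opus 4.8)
The plan is to mimic the proof of Proposition~\ref{prop:generators_for_exponetial}, but with the matrix $H_d^\chi$ rewritten in the ``Stirling coordinates'' introduced just before the statement. The key point is that the entries of $H_d^\chi$ in the displayed form are exactly $\sum_{i=0}^{j-1}(-2)^{j-1-i}S_{j-1,i}m_i$ in column $j$ of the first row (and the analogous shift in the second row); by the Stirling inversion identity recalled above, the linear change of coordinates $m_i \mapsto n_i := \sum_{j=0}^i(-2)^{i-j}S_{i,j}m_j$ is invertible over $\CC$, with inverse $n_i \mapsto m_i = \sum_{j=0}^i (-2)^{i-j}s_{i,j}n_j$ given by the signed Stirling numbers of the first kind. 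Under this linear isomorphism of $\PP^d$, the matrix $H_d^\chi$ becomes the honest Hankel matrix
\[
\widetilde H_d^\chi = \begin{pmatrix} n_0 & n_1 & n_2 & \cdots & n_{d-1}\\ n_1 & n_2 & n_3 & \cdots & n_d\end{pmatrix},
\]
since by construction $n_i = k^i$ on the image of the parametrization. So the first step is to verify this identification of $H_d^\chi$ with $\widetilde H_d^\chi$ up to the linear coordinate change, carefully tracking the index shifts in the two rows.

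Second, I would invoke Proposition~\ref{prop:Eis 1-gen}: $\widetilde H_d^\chi$ is a $2\times d$ Hankel matrix, hence $1$-generic, so $I_2(\widetilde H_d^\chi)$ is a prime ideal of codimension $d-1$, and $\mathcal{V}(I_2(\widetilde H_d^\chi))$ is the rational normal curve of degree $d$ in $\PP^d$ — the image of $[1:k:k^2:\cdots:k^d]$. Since a linear automorphism of $\PP^d$ carries prime ideals to prime ideals and rational normal curves to rational normal curves, $I_2(H_d^\chi)$ is likewise prime of codimension $d-1$ and $\mathcal{V}(I_2(H_d^\chi))$ is a rational normal curve.

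Third, I would check the inclusion $I_2(H_d^\chi)\subseteq \mathcal{I}(\mathcal{M}^\chi_d)$ and the reverse containment via dimensions. For the inclusion, the vector $(k,-1)$ (equivalently $(-2k,2)$ after clearing the factor-of-two columns) lies in the kernel of $H_d^\chi$ evaluated on the parametrization, because consecutive columns of $\widetilde H_d^\chi$ satisfy $n_{i+1}=k\cdot n_i$; so all $2$-minors vanish on $\mathcal{M}^\chi_d$. For the reverse, $\mathcal{M}^\chi_d$ is a one-dimensional irreducible variety (it is the closure of a curve parametrized by the single parameter $k$), and $\mathcal{V}(I_2(H_d^\chi))$ is the irreducible rational normal curve of dimension $1$ containing it, hence the two varieties are equal; since $I_2(H_d^\chi)$ is prime, it must equal $\mathcal{I}(\mathcal{M}^\chi_d)$.

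I expect the only genuine obstacle to be bookkeeping in the first step: confirming that the stated entries of $H_d^\chi$ really are the Stirling-transformed moments with the correct index offsets between the two rows (the second row is the first row shifted by one, but one must be sure the leading $0$ that appeared in the gamma-style matrix $H_d^\chi$ has genuinely disappeared — i.e. that $n_0=m_0\neq 0$ is the new top-left entry rather than a zero), and that no spurious constant factors obstruct invertibility of the coordinate change. Everything after that is a direct appeal to Proposition~\ref{prop:Eis 1-gen} together with the irreducibility and dimension count, exactly as in Proposition~\ref{prop:generators_for_exponetial}.
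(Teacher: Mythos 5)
Your proposal takes essentially the same route as the paper: in both, the decisive step is the invertible linear (Stirling) change of coordinates on $\PP^d$ that turns $H_d^\chi$ into the standard $2\times d$ Hankel matrix in the new coordinates $x_j$, with $\M_d^\chi$ becoming the rational normal curve $[1:x_1:x_1^2:\cdots:x_1^d]$, after which one pulls back its well-known ideal $I_2$. The paper concludes a bit more directly (it simply identifies $\Phi(\M_d^\chi)$ with the rational normal curve and applies $\varphi^{-1}$, so the separate $1$-genericity plus inclusion-plus-dimension argument you sketch is not needed, though it is correct); also, the parenthetical about a kernel vector $(-2k,2)$ and ``factor-of-two columns'' is a slip — the $2\times d$ matrix $H_d^\chi$ has no such factors and the kernel vector is just $(k,-1)$.
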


\begin{proof}
    Consider the algebra homomorphism $\varphi\from\CC[m_0,\ldots,m_d]\to\CC[x_0,\ldots,x_d]$ defined by the linear map
    \[m_j\mapsto\begin{cases}
        \sum_{i=0}^j (-2)^{j-i}s_{j,i}x_i&\quad j\geq 1,\\
        x_0&\quad j=0.
    \end{cases}\]
    This linear map can be given by an upper triangular matrix and is invertible, with inverse
    \[x_j\mapsto\begin{cases}
\sum_{i=0}^j (-2)^{j-i}S_{j,i}m_i & \quad j\geq 1,\\
m_0&\quad j=0.
    \end{cases}\]
    Thus, $\varphi$ is an algebra isomorphism, and induces an isomorphism of varieties $\Phi\from\mathbb{P}^d\to\mathbb{P}^d$, where the coordinates of the domain are given by the $x_i$'s, and the coordinates of the codomain are given by the $m_i$'s.
    
    Under this map, a point $[1:m_1:\cdots:m_d]\in\mathcal{M}^{\chi}_{d}\cap\{m_0=1\}$ is mapped to $[1:x_1:x_1^2:\cdots:x_1^d]$. Thus, the homogenous prime ideal of the projective variety $\Phi(\mathcal{M}^{\chi}_{d})$ is  generated by the $2\times 2$ minors of the matrix
    \[H'_d=\begin{pmatrix} x_0 & x_1 & x_2 & \cdots & x_{d-1} \\
    x_1 & x_2 & x_3 & \cdots & x_d\end{pmatrix},\]
    and we conclude that the homogeneous prime ideal of $\mathcal{M}^{\chi}_{d}$ is given by $\varphi^{-1}(I_2(H'_d))=I_2(H_d^{\chi})$.
\end{proof}

An interesting direction for future work is to classify algebraic and geometric properties of moment varieties for polynomial distributions. For example, in \cite{aThesis}, the author studies moment varieties for the Poisson distribution, which is also a one-parameter distribution. These varieties were shown to be rational normal curves. Given our results on the moment varieties for the exponential and chi-squared distributions, this raises the following problem.

\begin{problem}
    Classify projective curves that can appear as moment varieties of one-parameter distributions.
\end{problem}

It is also worth noticing that all moment varieties studied in this paper are Cohen--Macaulay. Moreover, to the best of our knowledge, there is no known distribution for which the moment varieties fail to have this property (for example, those studied in \cite{AFS16} and \cite{aThesis} are all Cohen--Macaulay). This leads to the following problem.

\begin{problem}\label{Q:CM}
    Classify all Cohen--Macaulay moment varieties from polynomial distributions.
\end{problem}

\section{Future directions}\label{sec:future_directions}
In this section, we discuss two possible directions for the future algebraic study of moment varieties arising from inverse Gaussian and gamma distributions: identifiability of mixtures, and the complexity of the optimization problem that arises in the overdetermined scenario.

\subsection{Algebraic identifiability of mixtures}\label{subsec:identifiability}

It is often the case that real-world datasets exhibit multimodal behavior or heterogeneity. To model these datasets, statisticians use \textit{mixtures of distributions} which, thanks to their inherent flexibility, allow modeling complex and diverse patterns within data. Mixtures of distributions are simply convex combinations of probability distributions (see Figure~\ref{fig:mixture} for an example).

\begin{figure}[h]
\centering
\begin{tikzpicture}[
scale=0.8,
declare function={pdf_ig(\x,\mu,\lambda)=sqrt(\lambda/(2*pi*\x^3)*exp(-\lambda*(\x-\mu)^2/(2*\mu^2*\x));}
]
\begin{axis}[
axis lines=left,
enlargelimits=upper,
ymax=1,
samples=100,
column sep=0.2em,
row sep=0.2em,
legend cell align={left},
nodes={scale=0.88, transform shape},
legend entries={
Component with $\mu_1=1${,} $\lambda_1=5$,
Component with $\mu_2=2${,} $\lambda_2=20$, 
Mixture with $\alpha_1=0.4${,}
$\alpha_2=0.6$}
]
\addplot [smooth, domain=0:5, red, dashed]{0.4*pdf_ig(x,1,5)};
\addplot [smooth, domain=0:5, blue, dashed]{0.6*pdf_ig(x,2,20)};
\addplot [smooth, domain=0:5, purple]{0.4*pdf_ig(x,1,5)+0.6*pdf_ig(x,2,20)};
\end{axis}
\end{tikzpicture}
\caption{Probability density for a mixture of two inverse Gaussians.
}
\label{fig:mixture}
\end{figure}
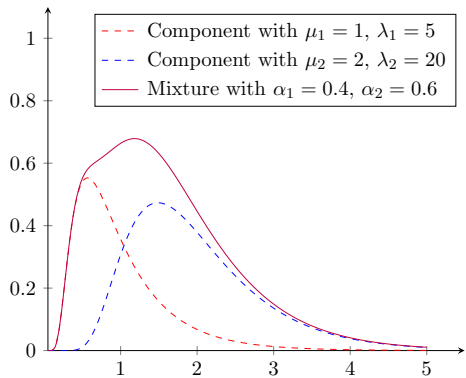

Geometrically, the moment variety of a distribution that is the mixture of $k$ independent observations of a distribution with moment variety $\M_d$ corresponds to the $k$th \emph{secant variety} of $\M_d$, which we denote by $\Sec_k(\M_d)$ (see, e.g., \cite{BCCGO18} for an overview on secant varieties). If the moments depend on $n$ parameters $\theta=(\theta_1,\ldots,\theta_n)$, it holds that $\Sec_k(\M_d)$ is given by the projective closure in $\PP^d$ of the image of the map
\begin{equation}\label{eq:parametrization_secant_variety}
\begin{array}{l}(\CC^n)^k\times\CC^{k-1}\to\CC^d\\\left((\theta^{(1)},\ldots,\theta^{(k)}),\alpha\right)\mapsto \left(\textstyle\sum_{i=1}^{k-1}\alpha_i\,m_r(\theta^{(i)})+(1-\textstyle\sum_{i=1}^{k-1}\alpha_i)\,m_r(\theta^{(k)})\right)_{r=1,\ldots,d}\,.\end{array}
\end{equation}
The dimension of  $\Sec_k(\M_d)$ is  bounded above by the dimension of the domain of \eqref{eq:parametrization_secant_variety}, so we have 
$$ \dim \left(\Sec_k (\M_d)\right) \leq \min\{d, nk+k-1\}.$$
If equality holds we say that $\Sec_k(\M_d)$ is \emph{nondefective}; otherwise, we say that $\M_d$ is \emph{$k$-defective}. In particular, if $\Sec_k(\M_d)$ is nondefective and $d\geq nk+k-1$, then the parameters of each component, as well as the mixing coefficients, are algebraically identifiable from the $nk+k-1$ first moments. This motivates the study of \textit{non-defectiveness} of $\Sec_k(\M_d)$. 
It was proven in \cite{ARS18} that $\Sec_k(\M_d)$ is nondefective for all $k,d\geq 2$ in the Gaussian case, and a natural question is whether the same is true for the distributions considered in this paper.

For the exponential and chi-squared distributions, non-defectiveness (and, in fact, rational identifiability) follows directly from the determinantal realizations found in \S\ref{subsec:1dim}.

\begin{proposition}
For $k$-mixtures of the exponential distribution or the chi-squared distribution, we have rational identifiability from the first $2k-1$ moments.
\end{proposition}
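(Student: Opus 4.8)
The plan is to use the determinantal realizations from Propositions~\ref{prop:generators_for_exponetial} and~\ref{prop:ideal chi-square}, which exhibit $\M^{\exp}_d$ and $\M^{\chi}_d$ as rational normal curves in $\PP^d$, together with the classical identifiability of rational normal curves. Both distributions have $n=1$ parameter, so the domain of the secant parametrization~\eqref{eq:parametrization_secant_variety} for $\Sec_k$ has dimension $nk+k-1=2k-1$; taking $d=2k-1$, it therefore suffices to show that~\eqref{eq:parametrization_secant_variety} is generically injective (up to the harmless relabeling of components). Nondefectiveness, i.e. that the image fills $\PP^{2k-1}$, then follows from the dimension count together with generic finiteness of the fibers, and is in any case part of the classical picture.

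First I would straighten the curve. For the exponential distribution, the diagonal projective isomorphism $[m_0:m_1:\cdots:m_d]\mapsto[m_0:m_1/1!:\cdots:m_d/d!]$ carries $\M^{\exp}_d$ to the standard rational normal curve $\{[1:\lambda:\lambda^2:\cdots:\lambda^d]\}$ and fixes the patch $\{m_0=1\}$; for the chi-squared distribution, the linear change of coordinates underlying Proposition~\ref{prop:ideal chi-square} does the same, identifying the parameter-$k$ point with $[1:k:\cdots:k^d]$. Since these isomorphisms act linearly on the affine cones, they take the $k$-mixture point with component parameters $\lambda_1,\ldots,\lambda_k$ and mixing weights $\alpha_1,\ldots,\alpha_k$ (normalized so $\sum_j\alpha_j=1$) to $\bigl(\sum_j\alpha_j\lambda_j^{\,r}\bigr)_{r=0,\ldots,d}$, i.e. to the vector of moments of order $0$ through $d$ of the atomic measure $\sum_j\alpha_j\delta_{\lambda_j}$. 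Thus rational identifiability of the $k$-mixture from the first $2k-1$ moments becomes equivalent to the assertion that a generic atomic measure with $k$ atoms is determined by its moments of order $\leq 2k-1$.

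This last assertion is the correctness of Prony's method: for distinct $\lambda_j$ and nonzero $\alpha_j$, the Hankel matrix $\bigl(\mu_{i+j}\bigr)_{0\leq i\leq k-1,\,0\leq j\leq k}$ built from $\mu_0,\ldots,\mu_{2k-1}$ has rank exactly $k$, its one-dimensional kernel is spanned by the coefficient vector of $\prod_j(x-\lambda_j)$, and the weights are recovered as the unique solution of the nonsingular Vandermonde system $\mu_r=\sum_j\alpha_j\lambda_j^{\,r}$, $r=0,\ldots,k-1$; the locus where this fails is a proper closed subset, so on a dense open set the procedure returns the unordered multiset $\{(\lambda_j,\alpha_j)\}$, which is precisely the data of the mixture. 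The only points to be slightly careful about — not real obstacles — are the $k!$-fold labeling symmetry of the components (so "generically one-to-one" is meant up to this relabeling, as in the definition of rational identifiability) and checking that the Prony genericity locus pulls back to a dense open subset of parameter space. Alternatively, one can quote Sylvester's theorem directly: a generic binary form of odd degree $2k-1$ is a sum of $k$ powers of pairwise distinct linear forms in a unique way (see, e.g., \cite{BCCGO18} and the references therein), which via the standard dictionary with the rational normal curve of degree $2k-1$ is exactly the claim.
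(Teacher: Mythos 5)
Your argument is correct and takes essentially the same route as the paper: you first use Propositions~\ref{prop:generators_for_exponetial} and \ref{prop:ideal chi-square} to identify $\M^{\exp}_d$ and $\M^{\chi}_d$ with the rational normal curve of degree $d$ via a linear change of coordinates, and then invoke uniqueness of the $k$-secant through a general point of $\Sec_k$ when $d\geq 2k-1$. The only difference is cosmetic: where the paper cites the general-position property of points on a rational normal curve (\cite[Example~1.14]{Har13}), you re-derive the same uniqueness statement constructively via Prony's method / Sylvester's theorem, which amounts to the same underlying fact.
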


\begin{proof}
Propositions~\ref{prop:generators_for_exponetial} and \ref{prop:ideal chi-square} give that both $\M_d^{\exp}$ and $\M_d^{\chi}$ are linearly isomorphic to the rational normal curve in $\PP^d$. The result now follows from the fact that a general point on the $k$th secant variety of the rational normal curve in $\PP^d$ lies on a unique $k$-secant if $2k\leq d+1$ (since any $d+1$ points on such a curve are linearly independent \cite[Example~1.14]{Har13}). 
\end{proof}

The cases of the inverse Gaussian and gamma distributions are more complicated. An elementary first observation one can make is that the dimension of $\Sec_k(\M_d)$ is bounded below by the rank of the Jacobian of the parametrization \eqref{eq:parametrization_secant_variety} at any point in the domain. By computing this rank for randomly chosen points with rational entries in exact arithmetic in \texttt{Maple}, we are able to computationally verify non-defectiveness for all $d,k\leqslant 100$. Based on this, we conjecture the following.

\begin{conjecture}\label{conj:nondefec}
  $\Sec_k (\M^\mathrm{IG}_d)$ and $\Sec_k (\M^{\Gamma}_d)$ are  nondefective for all $d\geq 2$ and $k\geq 2$.
\end{conjecture}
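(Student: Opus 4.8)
The plan is to attack Conjecture~\ref{conj:nondefec} via Terracini's lemma, reducing non-defectivity to a rank statement about an explicit matrix of tangent vectors. Work with the affine cones $\widehat{\M}_d \subseteq \CC^{d+1}$ over $\M_d \in \{\M^{\mathrm{IG}}_d, \M^\Gamma_d\}$. Since $\widehat{\M}_d$ has dimension $3$, non-defectivity of $\Sec_k(\M_d)$ is equivalent to the assertion that, for general parameter values $\theta^{(1)},\ldots,\theta^{(k)} \in \CC^2$, the sum of the affine tangent spaces $\widehat{T}_{p_i}\widehat{\M}_d$ — each $3$-dimensional, spanned by the point $p_i = m(\theta^{(i)})$ and the two parameter-derivative vectors — has dimension $\min\{d+1, 3k\}$. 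Equivalently, the $(d+1)\times 3k$ matrix $T = [\,T^{(1)} \mid \cdots \mid T^{(k)}\,]$, whose $i$-th block $T^{(i)}$ has columns $m(\theta^{(i)})$, $\partial_{\theta_1}m(\theta^{(i)})$, $\partial_{\theta_2}m(\theta^{(i)})$, attains the maximal possible rank for generic parameters.

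First I would make the entries of $T$ fully explicit by differentiating the defining recursions. For the gamma distribution $m_i = \theta^i (k)_i$ is weighted-homogeneous of degree $i$ in $\theta$, so the $\partial_\theta$-direction is (up to the scalar $1/\theta$) the weighted grading vector $(0\cdot m_0, 1\cdot m_1, \ldots, d\cdot m_d)$ — which is, up to truncation, the top row of $H_d^\Gamma$ — while $n := \partial_k m$ obeys $n_0 = 0$ and $n_i = \theta\, m_{i-1} + (k+i-1)\theta\, n_{i-1}$; for the inverse Gaussian the same weighted-homogeneity (with $\deg\mu = \deg\lambda = 1$) and analogous recursions follow by differentiating~\eqref{eq:parametrization_inverse_gaussian} in $\mu$ and in $1/\lambda$. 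Moreover, since $m_r$, $\partial_\theta m_r$ and $\partial_k m_r$ are all proportional to $\theta^r$ in the gamma case, each block factors as $T^{(i)} = D(\theta^{(i)})\, M(k^{(i)})$ with $D(\theta) = \operatorname{diag}(1,\theta,\ldots,\theta^d)$ and $M(k)$ a $(d+1)\times 3$ matrix with polynomial entries whose columns are, up to the unipotent Stirling change of coordinates, the moment curve $(1,k,\ldots,k^d)$, its rescaling by the grading $(0,1,\ldots,d)$, and its derivative. This structure (a torus-rescaled, osculating-type datum on a rational normal curve, compatible with the determinantal realizations of Theorems~\ref{thm:invgauss ideal} and~\ref{thm:gamma ideal}) is what one would exploit.

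The core of the proof would be an induction on $d$, with $k$ also varying, carried out by a Horace-type specialization: degenerate some of the points $p_i$ so that they together with their tangent directions lie on a coordinate hyperplane such as $\{m_d = 0\}$ or $\{m_0 = 0\}$, and split the rank of $T$ using the exact sequence relating the restriction to that hyperplane section with its residual. The relevant hyperplane sections are simple — by (the proofs of) Propositions~\ref{prop:singular_locus_inverse_gaussian} and~\ref{prop:singular_locus_gamma}, the loci $\{m_0 = 0\}$ and $\{m_1 = 0\}$ meet $\M_d$ in unions of lines, while $\{m_d = 0\}$ is closely related to $\M_{d-1}$ — so the bookkeeping should be manageable. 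For the base cases one uses that when $3k - 1 \ge d$ the secant variety is expected to fill $\PP^d$, which is immediate for small $d$; the genuinely constrained small cases, the exponential and chi-squared specializations (rational normal curves by Propositions~\ref{prop:generators_for_exponetial} and~\ref{prop:ideal chi-square}, hence non-defective), and the exhaustive computer check for $d,k \le 100$ reported above would anchor the induction.

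The hard part will be making the inductive step go through uniformly. Secant non-defectivity of surfaces is considerably more delicate than the Veronese situation handled by Alexander--Hirschowitz: one must choose the specialization so that both the hyperplane-section piece and the residual piece stay in their non-defective regimes, control the interface between them, and be alert to sporadic exceptional pairs $(d,k)$ — even though the numerical evidence suggests none occur. A further nuisance specific to the gamma case is that $\partial_k m$ has no closed form beyond the recursion above, obstructing a purely determinantal argument. I would keep two fallback routes in mind: a catalecticant-style description of $\Sec_k(\M_d)$ as a rank-drop locus of an auxiliary Hankel matrix, paralleling the classical picture for rational normal curves and the cumulant varieties of Propositions~\ref{prop:cumulant_variety_for_inverse_guassian} and~\ref{prop:cumulant_variety_for_gamma}; and a tropical/monomial degeneration — though the most obvious one, degenerating $\M_d$ to the Stanley--Reisner scheme of its antidiagonal initial ideal, is doomed, since that scheme is a union of $2$-planes through a common line whose $k$-th secant variety has dimension only $k+1$, far below $3k-1$, and secant dimension need not be preserved under flat limits.
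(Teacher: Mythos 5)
The statement you were asked to prove is Conjecture~\ref{conj:nondefec}, which the paper does not prove: the authors report a Jacobian rank check of~\eqref{eq:parametrization_secant_variety} at random points for all $d,k\le 100$, give the partial reduction in Proposition~\ref{prop:contained_in_cone}, and explicitly describe the remaining step as ongoing work. Your proposal is likewise not a proof but a plan with an acknowledged hole. The reduction via Terracini's lemma to full rank of the $(d+1)\times 3k$ tangent matrix $T$ is correct and standard, the block factorization coming from weighted homogeneity of the moments is a sound structural observation, and the caveat that a naive degeneration to the antidiagonal initial scheme destroys secant dimension is well taken and correctly rules out a tempting shortcut. But the load-bearing step --- an Horace-type specialization splitting $T$ along a hyperplane such as $\{m_d=0\}$ or $\{m_0=0\}$ --- is only gestured at, and you flag it yourself as ``the hard part.'' Nothing in the sketch identifies the residual scheme after restriction as a moment variety (or a secant thereof) in a nondefective range, nor explains how the curves $\M_d^{\exp}$, $\M_d^{\chi}$ of Propositions~\ref{prop:generators_for_exponetial} and~\ref{prop:ideal chi-square}, which are one-parameter subloci of a fixed $\M_d^{\Gamma}$ rather than lower-$d$ instances, could serve as base cases for an induction on $d$. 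Horace arguments for surfaces in $\PP^d$ are notoriously delicate precisely because there is no clean cohomological vanishing driving the trace/residual split as there is in the Veronese setting; until the specialization and both pieces of the splitting are made explicit and verified, the induction does not close.

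It is also worth noting that your route is genuinely different from the one the authors are pursuing. You attack the Terracini matrix directly; the paper instead invokes the classification of $k$-defective surfaces due to Terracini and Chiantini--Ciliberto~\cite{CC02}, which says a $k$-defective surface must either lie in a cone over a curve with apex of dimension at most $k-2$ or be a quadratic Veronese re-embedding of a rational normal surface. Proposition~\ref{prop:contained_in_cone} already excludes the Veronese alternative by comparing singular loci (via Propositions~\ref{prop:singular_locus_inverse_gaussian} and~\ref{prop:singular_locus_gamma}, $\M_d$ has too many singular points), leaving only the cone case open. That strategy sidesteps the Horace bookkeeping entirely in favor of a single remaining geometric exclusion, and is accordingly closer to completion. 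Your approach, if carried through, would be more self-contained and might generalize beyond two-parameter families, but as written it carries the larger gap of the two.
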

    
A helpful step towards a proof of this conjecture is
Terracini's classification of $k$-defective surfaces (see, e.g., \cite[Theorem~8]{ARS18} and \cite[Theorem~1.3]{CC02}), which establishes that the only $k$-defective surfaces are either cones or quadratic Veronese embeddings of a rational normal surface in $\PP^k$. Similar to in \cite{ARS18}, we can rule out the latter possibility.

\begin{proposition} \label{prop:contained_in_cone}
    Let $\M_d$ be either $\M^\mathrm{IG}_d$ or $\M^{\Gamma}_d$. If $\M_d$ is $k$-defective, then it is contained in a cone over a curve, with apex a linear space of dimension at most $ k-2$. 
\end{proposition}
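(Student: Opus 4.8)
The plan is to invoke Terracini's classification of $k$-defective surfaces (as cited in the excerpt) and then rule out the Veronese case, which leaves only the cone case. By that classification, if $\M_d$ is a $k$-defective irreducible surface, then either $\M_d$ is contained in a cone over a curve with vertex a linear space of dimension at most $k-2$, or $\M_d$ is the quadratic Veronese embedding of a rational normal surface in some $\PP^k$ (up to a linear isomorphism of the ambient projective space). So the entire task reduces to excluding the second alternative for $\M_d = \M^\mathrm{IG}_d$ and $\M_d = \M^\Gamma_d$.

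To exclude the Veronese alternative, I would argue via the degree. The quadratic Veronese embedding of a rational normal surface of degree $n$ in $\PP^k$ has degree $4n$ (the $2$-Veronese multiplies degrees by $4$ on a surface), and the image spans a $\binom{k+2}{2}-1 = \binom{\dim(\text{ambient})+1}{1}$–... more carefully: if $\M_d$ were such a Veronese it would sit in $\PP^N$ with $N=\binom{k+2}{2}-1$, forcing $d=\binom{k+2}{2}-1$, and its degree would be $4\binom{k-1}{2}$ or similar — in any case a value that does not match. Rather than chase the exact combinatorics, the clean route is: by Corollary~\ref{conj:degree_for_inverse_gaussian} and Corollary~\ref{cor:gamma deg}, $\deg\M^\mathrm{IG}_d = (d-1)^2$ and $\deg\M^\Gamma_d = \binom{d}{2}$, and by Propositions~\ref{prop:singular_locus_inverse_gaussian} and \ref{prop:singular_locus_gamma} the singular loci are a line-plus-point and two points respectively. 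A $2$-Veronese of a smooth rational normal surface is smooth, and a $2$-Veronese of a (singular) cone has a single singular point. Comparing with our explicitly computed singular loci — which in the inverse Gaussian case contain a whole line, and in the gamma case two distinct points — shows $\M_d$ cannot be such a Veronese surface. Thus the Veronese alternative is impossible, and only the cone alternative of Terracini's classification remains, giving exactly the stated conclusion.

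The main obstacle I anticipate is making the exclusion of the Veronese case airtight with the precise statement of Terracini's theorem being used (there are slightly different formulations, e.g.\ in \cite[Theorem~8]{ARS18} versus \cite[Theorem~1.3]{CC02}), and in particular handling the degenerate subcase where the "rational normal surface" is itself singular (a cone), so that its Veronese re-embedding has one singular point. The singular-locus computations already in hand handle this cleanly for both distributions, so the argument is short; the care needed is purely in matching hypotheses. One should also note for completeness that $\M^\mathrm{IG}_d$ and $\M^\Gamma_d$ are genuinely surfaces (not curves), which is immediate from $\dim\M_d=2$ established in Theorems~\ref{thm:invgauss ideal} and \ref{thm:gamma ideal}, and that they are irreducible since those ideals are prime — both prerequisites for applying the surface classification.
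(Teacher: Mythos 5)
Your proposal is correct and, after the detour through degree considerations (which you rightly abandon), it converges on exactly the argument used in the paper: a quadratic Veronese embedding of a rational normal surface has at most one singular point, whereas Propositions~\ref{prop:singular_locus_inverse_gaussian} and \ref{prop:singular_locus_gamma} show the singular locus contains a line for $\M^\mathrm{IG}_d$ and two points for $\M^{\Gamma}_d$, so the Veronese alternative in Terracini's classification is excluded. Your added remark about checking irreducibility and that the variety is genuinely a surface is a reasonable bit of bookkeeping that the paper leaves implicit.
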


\begin{proof}
Assume for a contradiction that $\M_d$ is not contained in such a cone. Then, by Terracini's classification of defective surfaces, it should be the quadratic Veronese embedding of a rational normal surface in $\PP^k$, and in particular have at most one singular point. But this is a contradiction, since the singular locus of $\M_d$ contains a line in the case of the inverse Gaussian distribution by Proposition~\ref{prop:singular_locus_inverse_gaussian}, and  two points in the case of the gamma distribution by Proposition~\ref{prop:singular_locus_gamma}.
\end{proof}

Ruling out the possibility of $\M_d$ being contained in a cone of the type described in Proposition~\ref{prop:contained_in_cone} is ongoing work with Kristian Ranestad.

\subsection{Identifiability degree}\label{subsec: ID}
Suppose Conjecture~\ref{conj:nondefec} is true, so that $\dim\!\left(\Sec_k(\M_d)\right)$ equals $\min\{3k-1,d\}$ for both the inverse Gaussian and gamma distribution. Then, for $d=3k-1$, we have \emph{generically finite fibers} of the parameterization \eqref{eq:parametrization_secant_variety}.
The generic cardinality of these fibers is the \emph{identifiability degree}, which measures the complexity of solving the moment equations. Identifiability degrees for mixtures of Gaussians have previously been studied with numerical algebraic geometry tools \cite{ALR16}, and here we take a similar approach.

Table~\ref{table:ID} contains numerical bounds on the identifiability degree up to the label-swapping symmetry for $d=3k-1$. The numbers were computed using monodromy in \texttt{HomotopyContinuation.jl} (with 10 loops without new solutions as the stopping condition), and certified as lower bounds with the techniques of \cite{BRT23}. When numerically feasible, the Pseudo-Segre trace test from \cite[\S3.3]{ALR16} was used to verify completeness of the solution set, using a tolerance of $\varepsilon=10^{-12}$.

\begin{table}[h]
    \centering
    \begin{tabular}{cccc}
    \toprule
        $k$ & 2 & 3  & 4 \\
        \midrule
        Gaussian 
        & 9 & 225 & $\geq 10\,350$*\\
        Gamma & $\geq 9$*  & $\geq 242$* & $\geq 13\,327$ \\
        Inverse Gaussian & $\geq 24$*  & $\geq 1637$ & $\geq 20\,000$ \\
        \bottomrule
    \end{tabular}
    \caption{Identifiability degrees for $k$-mixtures of three distributions, up to label-swapping symmetry. An asterisk is used to mark bounds for which a trace test indicates sharpness with high probability. The monodromy calculation for $k=4$ for the inverse Gaussian was manually terminated, and is therefore not expected to be sharp. The three values for the Gaussian distribution were computed in the works \cite{Pea94}, \cite{AFS16}, and \cite{ALR16}, respectively.}
    \label{table:ID}
\end{table}

Many open questions in this direction remain. Apart from computing the very large identifiability degree for $k=4$ in the inverse Gaussian case, one can also investigate the case where some parameters are fixed, or impose equality of some of the parameters of the components of the mixtures. This has been a successful line of research in the Gaussian case \cite{LAR21,AAR21}. 
Another possible direction is to better understand the smaller identifiability degrees already found in Table~\ref{table:ID} from the point of view of \cite{Pea94}. For instance, we propose the following problem.

\begin{problem}
Find explicit univariate polynomials of degree 9 and 24, analogous to the ``Pearson polynomial'' found in \cite{Pea94}, that account for the identifiability degrees for $k=2$ in Table~\ref{table:ID}, e.g., using a similar computer-algebra strategy as in \cite[\S3]{AFS16}.
\end{problem}

Finally, we also note that numerical experiments for $k\leq 4$ indicate that a single additional polynomial is enough to decrease the generic number of solutions given in Table~\ref{table:ID} to one. This would be in line with \cite[Conjecture~15]{AFS16} in the Gaussian case, and we therefore conjecture the following. 

\begin{conjecture}
For $k$-mixtures of the inverse Gaussian distribution or the gamma distribution, we have rational identifiability from the first $3k$ moments.
\end{conjecture}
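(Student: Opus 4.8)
The plan is to read ``rational identifiability from the first $3k$ moments'' as the assertion that the parametrization \eqref{eq:parametrization_secant_variety} of $\Sec_k(\M_{3k})$, viewed as a map $\psi\from(\CC^2)^k\times\CC^{k-1}\to\PP^{3k}$, is generically one-to-one up to the label-swapping action of $S_k$. Granting Conjecture~\ref{conj:nondefec}, the image $\Sec_k(\M_{3k})$ has the expected dimension $3k-1$, so $\psi$ is dominant and generically finite onto a hypersurface in $\PP^{3k}$, with generic fiber of cardinality $k!\cdot\delta$ for some $\delta\geq 1$, and the goal is to show $\delta=1$. Forgetting the last coordinate $m_{3k}$ factors $\psi$ through the parametrization $\psi'$ of $\Sec_k(\M_{3k-1})$, which (again by Conjecture~\ref{conj:nondefec}) is dominant onto all of $\PP^{3k-1}$ with generic fiber of cardinality $k!\cdot\delta'$, where $\delta'$ is the identifiability degree recorded in Table~\ref{table:ID}. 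Thus the problem reduces to showing that the single extra coordinate $m_{3k}$ takes $\delta'$ distinct values on the generic fiber of $\psi'$ considered modulo $S_k$; equivalently, that $m_{3k}$ separates the $\delta'$ solution-classes already present one degree lower.

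For each fixed small $k$ this separation can be certified computationally: one runs the monodromy loop that produced Table~\ref{table:ID} with the additional equation $m_{3k}=\widetilde m_{3k}$ imposed, checks that it returns a single solution modulo $S_k$, certifies the lower bound as in \cite{BRT23}, and confirms completeness with a (pseudo-Segre) trace test as in \cite[\S3.3]{ALR16}; our experiments for $k\leq 4$ already point to this. For arbitrary $k$ one would instead want an inductive ``peeling'' argument: from generic moments $\widetilde m_0,\dots,\widetilde m_{3k}$ of a $k$-mixture, set up a Hankel-type (Prony-style) construction that recovers the two parameters and the weight of one mixture component, subtract its contribution, and recurse. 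The recursive relations \eqref{eq:parametrization_inverse_gaussian} for the inverse Gaussian, and $m_i=\theta\,m_{i-1}(k+i-1)$ for the gamma, are the natural starting point for building such a construction.

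The main obstacle is that this Prony-type structure---which makes rational identifiability for the exponential and chi-squared distributions immediate, their moment varieties being rational normal curves on which a general point of the $k$th secant lies on a unique $k$-secant as soon as $2k\leq d+1$---does not survive for mixtures of the two-parameter distributions. Mixing is a \emph{secant} operation in moment coordinates, but the moments-to-cumulants transformation is a nonlinear Cremona map, so the very clean geometry of the cumulant varieties---a scaled Veronese surface for the inverse Gaussian (Proposition~\ref{prop:cumulant_variety_for_inverse_guassian}) and for the gamma (Proposition~\ref{prop:cumulant_variety_for_gamma})---cannot be pulled back to control $\Sec_k(\M_d)$. A viable replacement would be either a ``generalized Prony'' identity tailored to the rising-factorial weights $(k)_i$ appearing in the gamma moments (and the double-factorial weights for the inverse Gaussian), or a degeneration argument that specializes some of the $\theta_\ell$ or shape parameters to a configuration where the fiber of $\psi$ can be understood directly---for instance where the moment recursion collapses to the rational-normal-curve situation of \S\ref{subsec:1dim}---and then controls the behavior of the fiber under that degeneration. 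Carrying either strategy through in all degrees $3k$ is, as far as we can see, the crux, and the reason this statement is posed only as a conjecture.
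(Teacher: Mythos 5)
This statement is posed in the paper as a \emph{conjecture}, not a theorem, and the paper offers no proof: the only supporting evidence given is the observation that ``numerical experiments for $k\leq 4$ indicate that a single additional polynomial is enough to decrease the generic number of solutions given in Table~\ref{table:ID} to one,'' which the authors note is analogous to \cite[Conjecture~15]{AFS16} in the Gaussian case. Your write-up correctly recognizes this. Your reduction---that, granting Conjecture~\ref{conj:nondefec}, rational identifiability from $3k$ moments is equivalent to the extra coordinate $m_{3k}$ separating the $\delta'$ solution classes on the generic fiber of the $(3k-1)$-moment parametrization, and that this can be certified computationally for small $k$ by monodromy plus a trace test---matches both the paper's numerical evidence and its spirit, and is a sound way to make ``a single additional polynomial suffices'' precise. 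Your discussion of why a general-$k$ proof is out of reach (the mixing operation is a secant-variety operation in \emph{moment} coordinates, while the clean Veronese/rational-normal-curve structure lives in \emph{cumulant} coordinates and does not pull back through the Cremona transformation; Prony-type peeling does not survive the two-parameter setting) is accurate and, if anything, more explicit than what the paper says. The one thing the paper mentions as a concrete first step that you do not is the weaker claim of rational identifiability from $3k+2$ moments, for which the authors suggest adapting \cite[Theorem~4.4]{LAR21}; you might add that intermediate target to your list of strategies. Otherwise your analysis is a faithful and somewhat fuller account of the status of the conjecture than the paper itself gives.
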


A natural first step could be to prove the more modest claim of rational identifiability from $3k+2$ moments, for instance by attempting to adapt the techniques previously employed to prove this in the Gaussian setting \cite[Theorem~4.4]{LAR21}.

\subsection{Euclidean distance degree}\label{subsec: EDD}

For noisy sample moments, the moment equations might not have any statistically meaningful solutions. In that case, a common approach is to instead solve the optimization problem
\begin{equation}\label{eq:optimization_problem}
    \min_{\theta\in\CC^n} \Vert m(\theta)-\widetilde{m}\Vert^2,
\end{equation}
where $m(\theta)=(m_1(\theta),\ldots,m_d(\theta))$ is the first $d$ moments as functions of $\theta$, $\widetilde{m}\in\RR^d$ is the first $d$ sample moments, and $d>n$ is such that $\dim(\M_d)=n$. Here, $\Vert{\cdot}\Vert$ denotes the Euclidean norm, but we remark that the computations done in this section can be adapted also to weighted norms $\Vert{\cdot}\Vert_W$ with $\Vert x\Vert_W^2=x^\top Wx$ for some (positive semidefinite) matrix $W\in\RR^{d\times d}$. Such norms are often considered in the more general framework of the \emph{generalized method of moments} \cite{Han82}.

Compared to maximum likelihood estimation (MLE), which instead minimizes the Kullback--Liebler divergence, the distance-based approach studied here has the advantage of being a \emph{polynomial} optimization problem. Hence, it has a finite \emph{optimization degree}, which is the number of complex critical points for generic sample moments $\widetilde{m}_i$. Understanding this degree is a difficult problem, but can be studied geometrically through the lens of the related notion of the \emph{Euclidean distance degree} (ED degree). 

The ED degree of $\M_d$ is the generic number of critical points of the optimization problem
\begin{equation}\label{eq:ED_problem}
\min_{m\in\M_d\cap\{m_0=1\}}\Vert m-\widetilde{m}\Vert^2.
\end{equation}
This notion was introduced in \cite{DHOST16}, and has been shown to have a rich geometric meaning in terms of polar degrees \cite[Section~5]{DHOST16} or Euler characteristics \cite{MRW20}. This might make the ED degree more approachable to compute than the optimization degree of \eqref{eq:optimization_problem}. Since the optimization degree is bounded below by the product of the identifiability degree and the ED degree of $\M_d$, understanding the ED degree is a natural subproblem.

In this work, we take a first step in this direction by computing the ED degree of $\M_3$ for the inverse Gaussian and gamma distribution. For comparison purposes, we also do this for the Gaussian distribution, which to our knowledge has not been studied from the ED degree perspective before. In all three cases, the affine patch $\{m_0=1\}$ is a hypersurface cut out by a single polynomial $f\in\CC[m_1,m_2,m_3]$ is given in Examples~\ref{ex:generators_IG} and \ref{ex:generators_gamma}, and \cite[Proposition~2]{AFS16}, respectively. The ED degree is the number of complex roots of the system
\begin{equation}
\label{eq:ED_system_hypersurface}
f(m)=k\,\nabla f(m)-(m-\widetilde{m})=0,\quad m\in\CC^3,\quad k\in\CC
\end{equation}
for generic parameters $\widetilde{m}\in\CC^3$. A standard Gröbner basis calculation in \texttt{Oscar.jl} over the field $\CC(\widetilde{m}_1,\widetilde{m}_2,\widetilde{m}_3)$ of rational functions in the sample moments proves the following result.

\begin{proposition}
\label{prop:EDD_hypersurfaces}
For the inverse Gaussian distribution, the gamma distribution, and the Gaussian distribution, the ED degree is given by
\[\operatorname{EDdegree}(\M_3^\mathrm{IG})=12\,,\quad \operatorname{EDdegree}(\M_3^\Gamma)=10,\quad \operatorname{EDdegree}(\M_3^\mathrm{Gaussian})=7. \]
\end{proposition}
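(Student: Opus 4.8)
The plan is to reduce each ED degree computation to the ED degree of a plane-coordinate hypersurface and then evaluate it symbolically. First I would observe that in all three cases the affine patch $\M_3\cap\{m_0=1\}$ is a surface in $\CC^3$ cut out by a single polynomial $f\in\CC[m_1,m_2,m_3]$: for the inverse Gaussian, $f=-m_1m_3+3m_2^2-3m_1^2m_2+m_1^4$ (dehomogenizing the generator from Example~\ref{ex:generators_IG} by setting $m_0=1$); for the gamma distribution, $f=-m_1m_3+2m_2^2-m_1^2m_2$ (from Example~\ref{ex:generators_gamma}); and for the Gaussian distribution, $f$ is the dehomogenization of the cubic generating $\I(\M_3^{\mathrm{Gaussian}})$ from \cite[Proposition~2]{AFS16}. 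In each case the hypersurface is irreducible since the homogeneous ideal is prime, so the Lagrange-multiplier formulation \eqref{eq:ED_system_hypersurface} is the correct critical-point system.

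Next I would set up the system \eqref{eq:ED_system_hypersurface} over the coefficient field $\CC(\widetilde{m}_1,\widetilde{m}_2,\widetilde{m}_3)$: this is four polynomial equations ($f=0$ together with the three components of $k\nabla f-(m-\widetilde{m})=0$) in the four unknowns $m_1,m_2,m_3,k$. For generic $\widetilde{m}$ this system is zero-dimensional, and its number of solutions — counted with multiplicity but generically reduced — is precisely $\operatorname{EDdegree}(\M_3)$. The computation then amounts to a Gröbner basis calculation for this ideal in $\CC(\widetilde{m})[m_1,m_2,m_3,k]$, after which one reads off the dimension of the quotient as a $\CC(\widetilde{m})$-vector space (e.g.\ by counting standard monomials). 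Running this in \texttt{Oscar.jl} yields the three numbers $12$, $10$, and $7$.

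The main obstacle is purely computational: one must be careful that the Gröbner basis computation is genuinely being performed over the rational-function field $\CC(\widetilde{m})$ rather than specializing the $\widetilde{m}_i$ to random rationals, because a bad specialization could drop the solution count, and conversely one wants to confirm that the generic count is achieved (no further degeneration occurs). In practice this is handled by verifying that the leading coefficients appearing during the computation do not vanish identically in $\widetilde{m}$, or by cross-checking the symbolic count against several random rational specializations; the asserted equality of all these counts is the content of the proposition. There is no conceptual difficulty beyond organizing the computation correctly and trusting the exactness of the Gröbner engine over the function field.
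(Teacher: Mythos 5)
Your proposal is correct and takes essentially the same route as the paper: dehomogenize the single cubic generator at $m_0=1$, set up the Lagrange-multiplier system \eqref{eq:ED_system_hypersurface}, and compute a Gröbner basis over $\CC(\widetilde{m}_1,\widetilde{m}_2,\widetilde{m}_3)$ in \texttt{Oscar.jl} to count solutions. The extra commentary you add (writing out $f$ explicitly, and flagging the distinction between computing over the function field versus specializing $\widetilde{m}$ to random rationals) is a sound elaboration of what the paper does in one line.
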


This means that in all three cases, $f$ is generic enough to satisfy the mixed volume bounds given in  \cite[Theorem~1]{BSW22}, but not generic enough to satisfy the bound of \cite[Proposition~2.6]{DHOST16}, which evaluates to 52, 21, and 21, respectively. The determinantal realizations of the three hypersurfaces are also not generic enough in the family of $3\times 3$ matrices to give the value $1$ obtained from \cite[Example~2.3]{DHOST16}.

A natural future direction is to compute and analyze the ED degrees for higher values of $d$, e.g., using numerical algebraic geometry and ideas from the emerging field of \emph{metric algebraic geometry}. It would also be interesting to understand the ED degree of $\Sec_k(\M_d)$; since the implicitization problem for these secant varieties turns out to be very hard in our experience, this is a considerable challenge, even with numerical techniques.

\newlength{\bibitemsep}\setlength{\bibitemsep}{.2\baselineskip plus .05\baselineskip minus .05\baselineskip}
\newlength{\bibparskip}\setlength{\bibparskip}{0pt}
\let\oldthebibliography\thebibliography
\renewcommand\thebibliography[1]{
  \oldthebibliography{#1}
  \setlength{\parskip}{\bibitemsep}
  \setlength{\itemsep}{\bibparskip}
}

\bibliographystyle{alpha}
\bibliography{ref}

\vspace{1em}

\noindent {\bf Authors' addresses:}

\noindent 
Oskar Henriksson, University of Copenhagen \hfill{\tt oskar.henriksson@math.ku.dk}\\
Lisa Seccia, University of Neuchâtel \hfill{\tt lisa.seccia@unine.ch}\\
Teresa Yu, University of Michigan \hfill {\tt twyu@umich.edu}

\end{document}